\theoremstyle{plain}
\newtheorem{thm}{Theorem}
\numberwithin{thm}{section}
\newtheorem{lemma}[thm]{Lemma}
\newtheorem{propn}[thm]{Proposition}
\newtheorem{cor}[thm]{Corollary}
\newtheorem{fact*}{Fact*}
\newtheorem*{thm*}{Theorem}
\theoremstyle{definition}
\newtheorem{defn}[thm]{Definition}
\newtheorem{question}[thm]{Question}
\newtheorem{rmk}[thm]{Remark}
\newtheorem{example}[thm]{Example}
\newtheoremstyle{example}
{5pt}
{0pt}
{}
{}
{\it}
{.}
{.5em}
{}
\theoremstyle{example}
\newcommand{\<}{\langle}
\renewcommand{\>}{\rangle}
\newcommand{\R}{\mathbb{R}}
\newcommand{\Z}{\mathbb{Z}}
\newcommand{\C}{\mathbb{C}}
\newcommand{\N}{\mathbb{N}}
\newcommand{\Q}{\mathbb{Q}}
\newcommand{\T}{\mathbb{T}}
\newcommand{\SSS}{\mathbb{S}}
\newcommand{\XX}{\mathbb{X}}
\newcommand{\cM}{\mathcal{M}}
\newcommand{\cA}{\mathcal{A}}
\newcommand{\cB}{\mathcal{B}}
\newcommand{\m}{\mathsf{m}}
\renewcommand{\H}{\mathcal{H}}
\newcommand{\F}{\mathcal{F}}
\newcommand{\bdd}{\mathbb{B}}
\newcommand{\dd}{\mbox{\rm d}}
\newcommand{\Borel}{{B}}
\newcommand{\supp}{\operatorname{supp}}
\newcommand{\Prob}{\operatorname{Prob}}
\newcommand{\Erg}{\operatorname{Erg}}
\newcommand{\ProbG}{\operatorname{Prob}^G}
\newcommand{\Meas}{\operatorname{Prob}_{\hspace{0.08em} 0}}
\newcommand{\MeasG}{\operatorname{Prob}_0^G}
\newcommand{\eps}{\varepsilon}
\renewcommand{\subset}{\subseteq}
\renewcommand{\supset}{\supseteq}
\newcommand{\mat}[1]{\left( \begin{smallmatrix} #1 \end{smallmatrix} \right)}
\newcommand{\id}{\mathrm{id}}
\newcommand{\1}{\mathbb{1}}
\newcommand{\Cu}{C_{\mathsf{u}}}
\newcommand{\Cc}{C_{\mathsf{c}}}
\newcommand{\Cz}{C^{}_{0}}
\newcommand{\Bap}{\mathcal{B}\hspace*{-1pt}{\mathcal{AP}}}
\newcommand{\Map}{\mathcal{M}\hspace*{-1pt}{\mathcal{AP}}}
\newcommand{\bap}{{\mathsf{BAP}}}
\newcommand{\bapx}{\mathsf{Bap}_{\cA}(X)}
\newcommand{\map}{{\mathsf{MAP}}}
\begin{document}

\title{On the spectrum of non-ergodic measures}

\author[M.~Francis]{Michael Francis}
\address{Department of Mathematics and Statistics, MacEwan University, \newline
\hspace*{\parindent}  Edmonton, Alberta, Canada}
\email{francism29@macewan.ca}
\urladdr{https://mdfrancis.github.io/}

\author[C.~Ramsey]{Christopher Ramsey}
\address{Department of Mathematics and Statistics, MacEwan University, \newline
\hspace*{\parindent}  Edmonton, Alberta, Canada}
\email{ramseyc5@macewan.ca}
\urladdr{https://sites.google.com/macewan.ca/chrisramsey/}

\author[N.~Strungaru]{Nicolae Strungaru}
\address{Department of Mathematics and Statistics, MacEwan University, \newline
\hspace*{\parindent}  Edmonton, Alberta, Canada, 
and 
\newline \hspace*{\parindent} 
Institute of Mathematics ``Simon Stoilow'', 
Bucharest, Romania}
\email{strungarun@macewan.ca}
\urladdr{https://sites.google.com/macewan.ca/nicolae-strungaru/}

\begin{abstract} Consider a topological dynamical system where the group is abelian and the topologies are locally compact and second-countable. Given an invariant measure for this system, we show that if its dynamical spectrum is contained in some Borel subset of the dual group then the same holds almost surely for all ergodic measures arising via the Choquet theorem. In particular, if the invariant measure has pure point dynamical spectrum, so do almost all the ergodic measures.  As an application, we show that given any mean almost periodic measure, in its hull there exists a Besicovitch almost periodic measure. 
\end{abstract}


\maketitle


\section{Introduction}

The discovery of quasicrystals 40 years ago \cite{She} led to a new area of mathematics called aperiodic order, an area concerned with the study of objects which show long-range aperiodic order, usually via a large Bragg diffraction spectrum. Of special interest are objects which show pure point diffraction spectrum.

Building on earlier work of \cite{bm,ARMA,Gou,Mey,Sol,Sol2}, just to name a few, pure point diffraction was characterized in terms of almost periodicity of the underlying structure in \cite{LSS}. More precisely, we have:
\begin{itemize}
\item{} pure point diffraction is equivalent to mean almost periodicity of the underlying structure; 
\item{} pure point diffraction and the so-called consistent phase property (CPP) is equivalent to Besicovitch almost periodicity of the underlying structure; 
\item{} pure point diffraction and the CPP holding along all van Hove sequences is equivalent to Weyl almost periodicity of the structure.
\end{itemize}

Similar results hold given any suitable topological dynamical system $(X, G)$,   where  $G$ is  abelian, and any pure point $G$-invariant measure $\m$   \cite{LSS2,LS}. Indeed for a transitive point $x \in X$ we have:
\begin{itemize}
    \item $x$ is mean almost periodic if and only if $x$ is generic for a $G$-invariant measure $\m$ with pure point spectrum;
        \item $x$ is Besicovitch almost periodic if and only if $x$ is generic for a $G$-invariant ergodic measure $\m$ with pure point spectrum, and the Wiener--Wintner theorem holds;
     \item $x$ is Weyl almost periodic if and only if $X$ is uniquely ergodic, has continuous eigenfunctions and pure point spectrum.   
\end{itemize}

\medskip

The above work has led to renewed interest in these spaces of measures (and functions), and especially in understanding the similarities and difference between these concepts.

One interesting observation is that all known examples of mean almost periodic functions or measures which are not Besicovitch almost periodic (see for example \cite[Remark 3.4 or Proposition A.2]{LSS}) are obtained by interlacing
two (or more) Besicovitch almost periodic functions/measures. In particular, all known examples of such functions/measures are actually Besicovitch almost periodic with respect to a different averaging sequence. This raises a natural question:

\begin{question} Given a function/measure which is mean almost periodic with respect to some averaging sequence $\cA$, can we find another averaging sequence $\cB$ such that the function/measure becomes Besicovitch almost periodic? 
\end{question}

It is the goal of the paper to show that the answer is yes. 

Our approach is as follows: starting with a mean almost periodic measure $\mu$ on $G$, we can construct a $G$-invariant probability measure $\m$ on the hull $\XX(\mu)$  
with pure point spectrum \cite{LSS,LSS2}. If we can prove that at least one ergodic measure $\omega$ on $\XX(\mu)$ has pure point spectrum, we can deduce that $\XX(\mu)$ admits Besicovitch almost periodic measures, from which the claim follows.


To this end, we prove a much more general result about $G$-invariant probability measures on topological dynamical systems. Indeed, let $(X,G)$ be a topological dynamical system with $X$ and $G$ locally compact and second-countable, and let $\m$ be a $G$-invariant probability measure on $X$. Let $\rho$ be the measure on $\Erg(X)$ which represents $\m$ via the Choquet theorem. In our main result (Theorem~\ref{thm:almosteverywhere}) we show that if $B \subseteq \widehat{G}$ is a Borel set such that the spectrum of $\m$ is contained in $B$, then for $\rho$-a.s. all $\omega \in \Erg(X)$, the spectrum of $\omega$ is also contained in $B$. As a consequence we get that if $\m$ has pure point spectrum, then so does $\omega \in \Erg(X)$ for $\rho$-a.s. $\omega$.

For brevity, we require that  all our compact and locally compact spaces/groups be Hausdorff by definition.

The paper is organized as follows: In Section~\ref{sect:prel} we review some basic definitions and properties of functions and measures. We continue in Section~\ref{sect:Borel} by reviewing properties of the Borel structure on the set of probability measures on a second-countable locally compact Hausdorff space $X$. In Section~\ref{sec:choquet} we briefly review Choquet theory, while in Section~\ref{sect:spectral} we review spectral measures and spectral decomposition. Given a dynamical system $(X,G)$ and a $G$-invariant probability measure $\m$ on $X$, Choquet theory tells us that there exists a probability measure $\rho$ on $\Erg(X)$ such that
\begin{equation}\label{eq-rep}
\int_X f(x) \, \dd \m(x) = \int_{\Erg(X)} \int_X f(y) \, \dd \omega(y) \, \dd \rho(\omega) \qquad \forall f \in \Cz(X) \,.
\end{equation}
We show in Section~\ref{sect:main} that \eqref{eq-rep} holds for all $f \in B(X)$, the algebra of bounded Borel functions. As a consequence, we get in Theorem~\ref{thm:almosteverywhere} that if the spectrum of $L^2(X,\m)$ is supported inside some Borel set $B\subseteq \widehat{G}$ then so is the spectrum of $L^2(X,\omega)$ for $\rho$-almost all $\omega \in \Erg(X)$.

As immediate consequences, we get that if $L^2(X,\m)$ has pure point or singular spectrum, then so does $L^2(X, \omega)$ for $\rho$-almost all $\omega \in \Erg(X)$. We conclude the paper by showing in Section~\ref{sec:Bap} that if $\mu \in \cM^\infty(G)$ is mean almost periodic with respect to some van Hove sequence $\cA$ then $\mu$ is Besicovitch almost periodic with respect to some translated subsequence of $\cA$.

\section{Preliminaries}\label{sect:prel}

Throughout this article, $X$ denotes a second-countable locally compact Hausdorff space. We work with several algebras of complex-valued functions on $X$. We write $\Cz(X)$ for the algebra of continuous functions vanishing at infinity. Equipped with  the uniform norm,  $\Cz(X)$ is a separable\footnote{A locally compact Hausdorff space $Y$ is  second-countable if and only if it is   $\sigma$-compact and metrizable if and only if $\Cz(Y)$ is separable  in the uniform norm.} C*-algebra. We denote by $\Cc(X)$ the norm-dense ideal in $\Cz(X)$ of compactly-supported continuous functions. If $X$ is actually compact, then $\Cz(X)=\Cc(X)=C(X)$, the unital algebra of continuous functions on $X$. We write $\Borel(X)$ for the unital algebra of  bounded Borel measurable functions on $X$. Note $\Borel(X)$ is also a  C*-algebra with respect to the uniform norm, but typically a non-separable one.

\subsection{Spaces of measures}
We work with complex Radon measures on $X$ which we  allow to be finite or infinite. A simple way to be precise about which measures  are being considered is to define them  as linear functionals.

\begin{defn}\label{def:finite}
A \textbf{finite measure} on $X$ is a  linear functional on $\Cz(X)$ that is continuous with respect to the uniform norm. We denote the space of finite measures on $X$ by $\cM_f(X)$. The \textbf{weak-star topology} on $\cM_f(X)$ is such that $\m_i \to \m$  if and only if $\m_i(f) \to \m(f)$ for all $f \in \Cz(X)$.
\end{defn}

\begin{defn}\label{def:radon}
A  \textbf{Radon measure} on $X$ is a linear functional on $\Cc(X)$ whose restriction to the ideal $\Cz(U) \subset \Cc(X)$ is uniform norm-continuous for every pre-compact open set $U \subset X$. We denote the space of Radon measures on $X$ by  $\cM(X)$. The \textbf{vague topology} on $\cM(X)$ is such that $\mu_i \to \mu$  if and only if $\mu_i(\varphi) \to \mu(\varphi)$ for all $\varphi \in \Cc(X)$.
\end{defn}

Regardless of the   linear functional formalism above, we often write  $\int_{X} f(x)\, \dd \mu(x)$ or similar  instead of $\mu(f)$.  The following remark is intended to give further clarity  about  the types of measures being considered.

\begin{rmk}
\text{ }
\begin{enumerate}[(i)]
\item Because  $\Cc(X)$ is uniform norm-dense in $\Cz(X)$, one has $\cM_f(X) \subset \cM(X)$ in a natural way.
\item Just as $\cM_f(X)$ is the continuous dual of $\Cz(X)$ in the uniform norm-topology, $\cM(X)$ is the continuous dual of $\Cc(X)$ in the  inductive limit topology 
 \cite[Theorem~C.4]{RS2}. In particular, what we call the vague topology in Definition~\ref{def:radon} could also be called a  weak-star topology.  By using two terminologies, we hope to reduce the chance of  confusion with Definition~\ref{def:finite}.
\item One may equivalently view $\cM(X)$ as the $\C$-linear span of the cone $\cM_+(X)$ of positive linear functionals on $\Cc(X)$. See again 
 \cite{RS2}.
\item Using the  appropriate version of the Riesz--Markov--Kakutani representation theorem \cite[Theorem~2.14]{rudin}, $\cM_+(X)$ may be identified with the set of (possibly infinite) positive  Borel measures $\mu$ on   $X$ that satisfy $\mu(K) < \infty$ for all compact sets $K \subset X$.  Since $X$ is assumed to be second-countable, there are no issues of regularity to be concerned about; by \cite[Theorem~2.18]{rudin} it automatically holds that\begin{align*}
\mu(E) = \inf \{ \mu(U) : U \supset E, U \text{ open}\} = \sup \{ \mu(K) : K \subset E, K \text{ compact}\}
 \end{align*}
 for every Borel set $E \subset X$. 
\item Similarly, by the appropriate version of the Riesz--Markov--Kakutani representation theorem \cite[Theorem~6.19]{rudin}, the continuous dual $\cM_f(X)=\Cz(X)^*$ may be identified with the space  of finite complex    Borel measures on  $X$. Again, since $X$ is assumed to be second-countable,   there are no issues of measure-theoretic regularity to discuss.   
\item The usual concepts of the total variation measure $|\mu|$ and Jordan decomposition of a complex measure $\mu$ make  sense not just for finite measures but also for Radon measures. See \cite[Theorem~6.5.6]{Ped} for more information.
\end{enumerate}
\end{rmk}

\subsection{Translation bounded measures}

Let $G$ be  a second-countable locally compact (and Hausdorff) abelian group.

\begin{defn} A complex Radon measure $\mu \in \cM(G)$ is called \textbf{translation bounded} if for all compact sets $K \subset G$ we have  $\sup_{t \in G} \left| \mu \right|(t+K) < \infty$.
We denote the space of translation bounded measures by $\cM^\infty(G)$. 
\end{defn}

\begin{rmk} \label{rem:2.5}\
\begin{enumerate}[(a)]  
    \item\label{Unorm} Let $U \subset G$ be a  pre-compact open set containing the identity. Then,
    \[
\| \mu \|_{U} := \sup_{t \in G} \left| \mu \right|(t+U)
\]
defines a norm on $\cM^\infty(G)$. Moreover, different choices of $U$ define equivalent norms \cite[p.~32]{bm}.
 \item Let $U \subset G$ be a  pre-compact open set containing the identity. By definition, each $\mu \in \cM(G)$ defines a continuous linear functional on $\Cz(U) \subset \Cc(G)$; the measure $\mu$ is translation bounded if and only if the functionals defined by its translates $\{ T_t \mu :t \in G\}$ are contained in some ball of the dual space  $\Cz(U)^*$. Moreover, $\| \mu \|_{U}$ is the radius of the smallest such ball. By \eqref{Unorm}, the choice of $U$ does not matter.

\item Fix $U \subset G$ a pre-compact open set and $C>0$. The Banach--Alaoglu theorem implies that 
\[
\cM_{C,U}:=\{ \mu \in \cM^\infty(G) : \| \mu \|_{U}  \leq C \}
\]
is a vaguely compact subset of $\cM^\infty(G)$. Since $G$ is second-countable, the vague topology is metrizable on $\cM_{C,U}$ \cite[Theorem 2]{BL}.
\item A measure $\mu \in \cM(G)$ is translation bounded if and only if  $\mu*\varphi \in \Cu(G)$ for all $\varphi \in \Cc(G)$, where $\Cu(G)$ denotes the algebra of bounded uniformly continuous functions. See \cite[Theorem~1.1]{ARMA1} or \cite[Prop.~4.9.21]{MoSt}.
\end{enumerate}
\end{rmk}

\begin{defn}
The   \textbf{hull} of a translation bounded measure $\mu \in \cM^\infty(G)$  is 
\[
\XX(\mu):= \overline{\{ T_t \mu :t \in G\} \, ,  }
\]
the vague-closure  of the orbit of $\mu$ under  the natural translation action of $G$.
\end{defn}

\begin{rmk}
For  any pre-compact open set containing the identity  $U \subset G$, one has $\XX(\mu) \subset \cM_{C,U}$ for $C:= \| \mu \|_{U}$. Since $\cM_{C,U}$  is  vaguely compact and metrizable, the hull $\XX(\mu)$ is also a compact metrizable topological dynamical system. 
\end{rmk}

\subsection{Almost periodic functions}

We briefly review various notions of almost periodicity for functions and measures, particularly  Besicovitch and mean almost  periodicity. For a more detailed review, see \cite{LSS}.  As before, $G$ is a second-countable locally compact abelian group. We write $\Cu(G)$ for the algebra of bounded uniformly continuous functions on $G$ in the uniform norm. Let us first recall the strongest and most classical notion: a function $f \in \Cu(G)$ is called \textbf{Bohr almost periodic} if for each $\eps>0$ the set 
\[
P_{\eps,\sup} := \{ t \in G : \|T_t f-f \|_\infty < \eps \}
\]
of $\eps$-almost periods is relatively dense (also called co-compact) in $G$. This is equivalent to the Bochner condition that the orbit $\{T_t f :t \in G \}$ has compact closure in $(\Cu(G), \|\,\|_\infty)$. One may also characterize the Bohr almost periodic functions as the closure in the uniform norm of the algebra of trig polynomials on $G$.

\begin{rmk}
Both the  Bohr almost periodic functions and  the collection of all uniformly continuous functions on $G$ are  C*-algebras in the uniform norm. The Gelfand spectrum of the Bohr almost periodic functions is known as the Bohr compactification  and has been intensely studied. The spectrum of the uniformly continuous functions has also been studied and is sometimes referred to as the Samuel compactification or ``universal ambit'' \cite{samuel}.
\end{rmk}

To talk about mean and Besicovitch almost periodicity, we first need to introduce nice averaging sequences. For functions, the so-called F{\o}lner condition suffices.  When dealing with measures, however, one needs to work with more restrictive sequences  called van Hove sequences.

\begin{defn} A sequence of pre-compact sets  $F_n \subset G$ is called a \textbf{F{\o}lner sequence} if for all $t \in G$ we have 
\[
\lim_n \frac{|F_n \triangle (t+F_n)|}{|F_n|}=0 \,,
\]
where $\triangle$ denotes the symmetric difference of sets and the vertical bars denote a fixed Haar measure on $G$. Sometimes (when dealing with the Birkhoff ergodic theorem) we need to restrict attention to \textbf{tempered} F{\o}lner sequences, meaning  F{\o}lner sequences $F_n$ with the additional property that there exists some $C>0$ such that for all $n$ we have 
\[
\left| \bigcup_{k=1}^{n-1} F_n-F_k \right| < C |F_n| \,.
\]
\end{defn}
\begin{defn}
A sequence of pre-compact sets $A_n \subset G$ is called a \textbf{van Hove sequence} if for each compact set $K \subset G$ we have 
\[
\lim_n \frac{|\partial^K(A_n)|}{|A_n|} =0 \,.
\]
Here, the \textbf{$\mathbf{K}$-boundary} $\partial^K(A_n)$ of $A_n$ is defined as follows:
\[
\partial^K(A_n) \coloneqq \bigl( \overline{A_n+K}\setminus A_n^\circ\bigr) \cup
\bigl((\left(\overline{G \backslash A_n}\right) - K)\cap \overline{A_n}\, \bigr)
\,.
\]

\end{defn}

\begin{rmk}\
\begin{enumerate}[(a)]
\item Any F{\o}lner sequence has a tempered subsequence  \cite[Prop.~1.4]{Lin}. 
\item  Any van Hove sequence is trivially a F{\o}lner sequence. The converse is not true. 
    \item  A locally compact abelian group  admits a F{\o}lner sequence if and only if it admits a van Hove sequence if and only if the group is  $\sigma$-compact \cite{SS}. Note that, since we are dealing with abelian groups, the usual complications with amenability are not an issue.
    \item  If $A_n$ is a F{\o}lner/van Hove sequence and $t_n \in G$ then $t_n+A_n$ is also a F{\o}lner/van Hove sequence. 
\item These definitions do not require averaging sequences to exhaust the group. So, for example, $A_n = [0,n]$ is an acceptable F{\o}lner sequence for $G=\R$. 
\end{enumerate}
\end{rmk}

\smallskip

We can now introduce the Besicovitch semi-norm and mean/Besicovitch almost periodicity. Let us note here that all these concepts can be defined more generally for functions which are locally integrable, compare \cite{LSS}. Since below we will always deal with functions in $\Cu(G)$, we restrict our definitions to this space.

\begin{defn}
Each F{\o}lner sequence  $\cA= \{A_n\}$  in $G$ defines a semi-norm on $\Cu(G)$ via
\[
\| f \|_{\mathsf{b},1,\cA}:= \limsup_n \frac{1}{|A_n|} \int_{A_n} |f(t)| \, \dd t \,.
\]
This is called the \textbf{Besicovitch semi-norm} associated to $\cA$.
\end{defn}

\begin{defn} Let $f \in \Cu(G)$ and let $\cA$ be a F{\o}lner sequence in $G$. We say that $f$ is \textbf{mean almost periodic} with respect to $\cA$ if for each $\eps >0$ the set 
\[
P_{\eps,\mathsf{b}}:= \{ t \in G : \| T_t f-f \|_{\mathsf{b},1,\cA} < \eps \}
\]
is relatively dense. We say  $f \in \Cu(G)$ is \textbf{Besicovitch almost periodic}  with respect to $\cA$ if for each $\eps>0$ there exists some Bohr almost periodic function $g$ such that 
\[
\|f-g\|_{\mathsf{b},1,\cA} < \eps \,.
\]
We denote the spaces of mean and Besicovitch almost periodic functions by $\map_{\cA}(G)$ and $\bap_{\cA}(G)$. One has $\bap_{\cA}(G) \subseteq \map_{\cA}(G)$ and the inclusion is in general strict.
\end{defn}

\begin{rmk}\text{ }
\begin{itemize}
\item[(a)] Note  that in the related work \cite{LSS},  slightly larger spaces of functions  denoted 
$\mathsf{Bap}_\cA(G)$ and $\mathsf{Map}_\cA(G)$ are used whose constituents are not required to be uniformly continuous. Intersecting the latter spaces with $\Cu(G)$ gives the spaces  $\bap_\cA(G)$ and  $\map_\cA(G)$ used here.
    \item[(b)] One can define a Besicovitch semi-norm and mean/Besicovitch almost periodicity for each $1 \leq p <\infty$ (see \cite{LSS}). While these concepts can be different in general for different values of $p$, on the space $\Cu(G)$ the concept of mean/Besicovitch almost periodic function is the same for all $1 \leq p < \infty$. For this reason we restrict to the case $p=1$ in this paper.
    \item[(c)] $\bap_\cA(G)$ and  $\map_\cA(G)$ are norm-closed in $\Cu(G)$ and so are C*-algebras as well \cite{LSS2}.
\end{itemize}
\end{rmk}

The notions of mean and Besicovitch almost periodicity  can be  extended from the setting of functions to the setting of  translation bounded measures. These classes of measures are important for the classification of systems with pure point spectrum \cite{LSS,LSS2}.

\begin{defn} Suppose that  $\mu \in \cM^\infty(G)$. We say that $\mu$ is \textbf{mean/Besicovitch almost periodic} with respect to a van Hove sequence $\cA$ if for all $\varphi \in \Cc(G)$ the function $\mu*\varphi \in \Cu(G)$ is mean/Besicovitch almost periodic with respect to $\cA$. The spaces of mean and almost periodic Besicovitch measures are denoted by $\Map_{\cA}(G)$ and $\Bap_{\cA}(G)$, respectively.
\end{defn}

The following technical lemmas show that, in order to establish Besicovitch almost periodicity of a measure, one  does not really need to consider its convolution against  every continuous  compactly-supported function;  a  suitable  dense set is good enough. A similar statement holds for mean almost periodic measures, but we will not need this.

\begin{lemma}\label{lem-D-exists} 
Let $X$ be a second-countable locally compact Hausdorff space. Then, there exists some countable set $D \subseteq \Cc(X)$ such that, for each $\varphi \in \Cc(X)$, there exists a sequence $\psi_n \in D$ that converges to $\varphi$ uniformly and whose supports are contained in a single compact set $K_\varphi \subset G$.
\end{lemma}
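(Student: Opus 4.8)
The plan is to construct the countable set $D$ explicitly using the second-countability of $X$ together with a Stone--Weierstrass-type density argument. First I would fix a countable basis $\{U_j\}$ for the topology of $X$ consisting of pre-compact open sets, which exists because $X$ is second-countable and locally compact Hausdorff. From these I would build an exhaustion of $X$ by compact sets $K_1 \subset K_2 \subset \cdots$ with $K_m \subset \operatorname{int}(K_{m+1})$ and $\bigcup_m K_m = X$; this is standard for $\sigma$-compact locally compact Hausdorff spaces. For each $m$, choose (via Urysohn) a function $\chi_m \in \Cc(X)$ with $\chi_m \equiv 1$ on $K_m$ and $\supp(\chi_m) \subset K_{m+1}$.

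Next I would address the separability of $\Cc(X)$ in a controlled way. The point is that $\Cz(X)$ is separable in the uniform norm (as noted in the footnote in the excerpt), so fix a countable uniformly dense subset $\{g_k\} \subset \Cz(X)$. The candidate for $D$ is then the countable collection $\{\chi_m \cdot g_k : m, k \in \N\}$, or more carefully its closure under taking rational-coefficient combinations. The key idea is that multiplying by the cutoff $\chi_m$ forces all approximants to live in a single compact support $K_{m+1}$, which is exactly the uniformity of support demanded by the statement.

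The main step is then the approximation argument. Given $\varphi \in \Cc(X)$, let $K_\varphi := \supp(\varphi)$ and choose $m$ large enough that $K_\varphi \subset \operatorname{int}(K_m)$, so that $\chi_m \equiv 1$ on a neighbourhood of $K_\varphi$. Since $\{g_k\}$ is uniformly dense in $\Cz(X) \supseteq \Cc(X)$, pick $g_{k_n} \to \varphi$ uniformly; then $\psi_n := \chi_{m+1} \cdot g_{k_n}$ satisfies $\psi_n \in D$, each has support inside the fixed compact set $\supp(\chi_{m+1}) \subset K_{m+2}$, and $\psi_n \to \chi_{m+1}\varphi = \varphi$ uniformly (using $\|\chi_{m+1}\|_\infty \le 1$ and that $\chi_{m+1}\varphi = \varphi$). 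This delivers the required sequence with supports trapped in a single compact set $K_\varphi := K_{m+2}$.

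I expect the only genuine subtlety to be the bookkeeping that makes $D$ literally countable while still guaranteeing uniform support control for every $\varphi$ simultaneously: one must fix the cutoffs $\chi_m$ in advance rather than choosing them per $\varphi$, so that the product functions $\chi_m g_k$ form a single countable set whose elements already have pre-assigned compact supports. The rest is routine: uniform convergence of $g_{k_n}\to\varphi$ transfers to $\chi_m g_{k_n}\to\chi_m\varphi=\varphi$ because multiplication by the fixed bounded function $\chi_m$ is uniformly continuous on $\Cz(X)$. No deep input is needed beyond separability of $\Cz(X)$, Urysohn's lemma, and the existence of a compact exhaustion.
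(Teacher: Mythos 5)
Your proof is correct and takes essentially the same approach as the paper: exhaust $X$ by (pre-)compact sets using second-countability, then use separability to produce a countable family whose members have supports pinned inside a fixed compact set at each stage. The paper's version is marginally slicker---it takes $D=\bigcup_n D_n$ with $D_n$ a countable dense subset of $\Cz(U_n)$ for an increasing pre-compact open cover $\{U_n\}$, which makes your explicit Urysohn cutoffs $\chi_m$ unnecessary---but the underlying idea is identical.
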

\begin{proof}
Let  $U_n \subseteq X$ be an increasing sequence of pre-compact open sets that covers $X$.
For each $n$, let $D_n$ be a countable dense subset of $\Cz(U_n)$. Putting $D = \bigcup_n D_n$, we get the claim. 
\end{proof}

\begin{lemma}\label{lemma-D-used} Let $G$ be second-countable and let $D$ be as in Lemma~\ref{lem-D-exists}. Let $\mu \in \cM^\infty(G)$. Then $\mu \in \Bap_{\cA}(G)$ if and only if $
\mu*\varphi \in \bap_{\cA}(G)   
$ for all $\varphi \in D   
$.

\end{lemma}
\begin{proof}
First,  observe that if  $\mu \in \cM^\infty(G)$ and $\varphi \in \Cc(G)$ has $\supp(\mu) \subseteq -K$ for compact $K \subset G$, then 
\[ 
\| \mu*\varphi \|_{\mathsf{b},1,\cA} \leq \| \mu*\varphi \|_{\infty} \leq \| \varphi \|_{\infty} \| \mu \|_{K} \, . \]
Now fix $\varphi \in \Cc(G)$ and let $\varphi_n \in D$ converge uniformly to $\varphi$ and have supports contained in a compact set $K \subset G$. By the above observation,  $\mu * \varphi_n \to \mu * \varphi$ in $\| \cdot \|_{\mathsf{b},1,\cA}$. Since $\mu  * \varphi_n  \in  \bap_{\cA}(G)$, 
there exist Bohr almost periodic functions $f_n$ such that $\| \mu*\varphi_n-f_n \|_{\mathsf{b},1,\cA} < \frac{1}{n}$. Then,
\[
\| \mu*\varphi-f_n \|_{\mathsf{b},1,\cA}  \leq\| \mu*\varphi-\mu*\varphi_n \|_{\mathsf{b},1,\cA} +\| \mu*\varphi_n-f_n \|_{\mathsf{b},1,\cA}  \to 0 \,.
\]
\end{proof}

\subsection{Invariant measures on the hull}

We conclude this preliminary section by  recalling the following result from \cite{LS2}. Compare also  \cite{LS}.

\begin{thm}[Proposition 4.5(b) \cite{LS2}] \label{thm:map-gen}
Let $G$ be a second-countable locally compact abelian group and let $\cA$ be a van Hove sequence in $G$. Fix $\mu \in \cM^\infty(G)$. Then,  there exists  some subsequence $\cB=\{ B_n \}$ of $\cA$ and a $G$-invariant probability measure $\m$ on $\XX(\mu)$ such that for all $F \in C(\XX(\mu))$ we have
\[
\lim_n \frac{1}{|B_n|} \int_{B_n} F(T_t \mu) \, \dd t = \int_{\XX(\mu)} F(\omega) \, \dd \m(\omega) \,.
\]
Furthermore, if $\mu \in \Map_{\cA}(G)$ then $\m$ has pure point dynamical spectrum.\qed 
\end{thm}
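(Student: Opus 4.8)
The plan is to first produce $\m$ and the subsequence $\cB$ by a weak-$*$ compactness argument, and then to extract pure point spectrum from the hypothesis by transporting Besicovitch seminorm estimates on $G$ into exact norm estimates in $L^2(\XX(\mu),\m)$.

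First, since $\XX(\mu)$ is compact and metrizable, $C(\XX(\mu))$ is separable, so the state space of $C(\XX(\mu))$ is weak-$*$ compact and metrizable, hence sequentially compact. For each $n$ the functional $\Lambda_n(F):=\frac{1}{|A_n|}\int_{A_n}F(T_t\mu)\,\dd t$ is a state, so after passing to a subsequence $\cB=\{B_n\}$ of $\cA$ I may assume $\Lambda_n\to\Lambda$ in the weak-$*$ topology; the limit $\Lambda$ is again a state, i.e.\ (Riesz--Markov) a probability measure $\m$ with the asserted averaging property. Invariance of $\m$ follows from the van Hove (hence F{\o}lner) property: for fixed $s\in G$ and $F\in C(\XX(\mu))$ the difference of the averages of $F\circ T_s$ and $F$ over $B_n$ is at most $\|F\|_\infty\,|B_n\triangle(s+B_n)|/|B_n|\to 0$.

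For the pure point claim, set $h_F(t):=F(T_t\mu)$ for $F\in C(\XX(\mu))$. Then $F\mapsto h_F$ is a contractive unital $*$-homomorphism into the bounded functions on $G$. For the vaguely continuous evaluation functions $f_\varphi(\omega):=\omega(\varphi)$, $\varphi\in\Cc(G)$ (which therefore lie in $C(\XX(\mu))$), one computes $h_{f_\varphi}=\mu*\check\varphi\in\Cu(G)$, where $\check\varphi(x)=\varphi(-x)$; by hypothesis $\mu\in\Map_\cA(G)$, so $h_{f_\varphi}\in\map_\cA(G)$ for every $\varphi$. The functions $f_\varphi$ together with constants separate points of $\XX(\mu)$ and hence, by Stone--Weierstrass, generate a dense $*$-subalgebra of $C(\XX(\mu))$; since $\map_\cA(G)$ is a closed subalgebra of $\Cu(G)$ and $F\mapsto h_F$ is contractive and multiplicative, it follows that $h_F\in\map_\cA(G)$ for every $F\in C(\XX(\mu))$.

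Now I link the two halves. Let $V_sF:=F\circ T_s$ denote the Koopman representation on $L^2(\XX(\mu),\m)$, which is unitary (as $\m$ is invariant) and strongly continuous. Since $h_{V_sF-F}=T_{-s}h_F-h_F$ and $|V_sF-F|^2\in C(\XX(\mu))$, the averaging property of $\m$ along $\cB$ gives the exact identity
\[
\|V_sF-F\|_{L^2(\m)}^2=\lim_n\frac{1}{|B_n|}\int_{B_n}\bigl|(T_{-s}h_F-h_F)(t)\bigr|^2\,\dd t=\|T_{-s}h_F-h_F\|_{\mathsf{b},2,\cB}^2 .
\]
Because $h_F$ is mean almost periodic with respect to $\cA$, and because on $\Cu(G)$ mean almost periodicity does not depend on the exponent $p\in[1,\infty)$, the set $\{s:\|T_{-s}h_F-h_F\|_{\mathsf{b},2,\cA}<\eps\}$ is relatively dense; as $\cB$ is a subsequence of $\cA$ one has $\|\cdot\|_{\mathsf{b},2,\cB}\le\|\cdot\|_{\mathsf{b},2,\cA}$, so $\{s:\|V_sF-F\|_{L^2(\m)}<\eps\}$ is relatively dense for every $\eps>0$. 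Combined with strong continuity, the orbit map $s\mapsto V_sF$ is a Bohr almost periodic $L^2(\m)$-valued function, so by the Bochner characterization its orbit is precompact and $F$ is an almost periodic vector. Since the almost periodic vectors of a strongly continuous unitary representation of $G$ form a closed invariant subspace coinciding with the closed span of the eigenvectors (the pure point subspace), and since $C(\XX(\mu))$ is dense in $L^2(\m)$, we conclude that $L^2(\XX(\mu),\m)$ has pure point dynamical spectrum.

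I expect the main obstacle to be the displayed identity together with the subsequent Bochner step: one must ensure that $\m$ genuinely arises as a limit, not merely a $\limsup$, of averages along $\cB$, so that the $L^2(\m)$-norm equals the Besicovitch seminorm exactly, and then invoke the equivalence ``relatively dense almost periods plus continuity $\Rightarrow$ precompact orbit $\Rightarrow$ pure point'' in the Hilbert space setting. The passages from $\cA$ to $\cB$ and from $p=1$ to $p=2$ are routine, given the monotonicity of Besicovitch seminorms under subsequences and the cited $p$-independence.
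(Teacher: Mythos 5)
Your argument is correct in its essentials, but note that the paper does not prove this statement at all: it is imported verbatim from \cite{LS2} (Proposition~4.5(b)) with a \verb|\qed| in the statement, so what you have produced is a self-contained reconstruction rather than an alternative to an in-paper proof. Your route is the standard one and holds together: the Krylov--Bogolyubov step (weak-$*$ sequential compactness of the state space of the separable C*-algebra $C(\XX(\mu))$, plus the F{\o}lner estimate for invariance) is fine; the transfer $F\mapsto h_F$ is a contractive unital $*$-homomorphism whose image on the point-separating generators $f_\varphi$ lands in $\map_\cA(G)$ because $h_{f_\varphi}$ is (a reflection of) $\mu*\check\varphi$, and Stone--Weierstrass together with the fact that $\map_\cA(G)$ is a norm-closed $*$-subalgebra of $\Cu(G)$ (Remark~2.15(c) of the paper, quoted from \cite{LSS2}) extends this to all of $C(\XX(\mu))$; and the displayed identity $\|V_sF-F\|_{L^2(\m)}^2=\|T_{-s}h_F-h_F\|_{\mathsf{b},2,\cB}^2$ is exact precisely because you arranged genuine convergence along $\cB$. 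Three points deserve flagging. First, the identification $h_{f_\varphi}=\mu*\check\varphi$ is convention-dependent (with the paper's translation action one picks up a reflection $t\mapsto -t$, which a priori exchanges $\cA$ with $-\cA$); this is harmless but should be stated carefully, and is exactly the bookkeeping that \cite[Prop.~7.1]{LSS2} handles. Second, you silently use that $\map_\cA(G)$ is closed under products and that mean almost periodicity on $\Cu(G)$ is $p$-independent; both are cited facts in the paper, so this is legitimate, but they are not trivial. Third, the final step (relatively dense $\eps$-almost periods plus strong continuity $\Rightarrow$ precompact orbit $\Rightarrow$ membership in the closed span of eigenvectors) is the Bohr--Bochner equivalence for Hilbert-space-valued functions together with the Jacobs--de Leeuw--Glicksberg-type fact that the compact vectors of a unitary representation of an LCA group form the closed span of the eigenvectors; these are classical but should be attributed. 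With those attributions in place, your proof is complete.
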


\section{The Borel structure of probability  measures}\label{sect:Borel}
In this section we collect a series of needed results  concerning the Borel structure of spaces of probability measures. As usual, $X$ denotes a second-countable locally compact Hausdorff space.

Recall that, if $Z$ is a Banach space, the Banach--Alaoglu theorem gives that the unit ball of $Z^*$ is a compact   space in the weak-star topology and, if $Z$ is furthermore separable, then the ball of $Z^*$ is second-countable (or, equivalently, metrizable). In particular, the unit ball of $\cM_f(X)\coloneqq \Cz(X)^*$ is a compact metrizable space. The space of quasi-probability measures $\Meas(X)$ is  the positive part of this unit ball, i.e. the convex compact metrizable space of positive Borel measures $\mu$  with $\|\mu\| =\mu(X) \leq 1$.   The space of probability measures $\Prob(X) \subset \Meas(X)$ is the convex subspace of $\mu \in \Meas(X)$ such that $\|\mu\|=\mu(X)=1$, in other words the positive part of the unit sphere of  $\Cz(X)^*$. The difference between $\Prob(X)$ and $\Meas(X)$ spaces is fairly minor;  $\Meas(X)$ is the convex hull of $\Prob(X)$ and the zero measure. If $X$ is compact, $\Prob(X)$ is itself weak-star compact but, in general,  $\Prob(X)$ is only a $G_\delta$ subset of $\Meas(X)$. 

Recall that  $\Borel(X)$ denotes the space of bounded Borel measurable functions on $X$ equipped with  the sup-norm. Each $f \in \Borel(X)$, induces an
 affine function  $\Phi_f : \Meas(X) \to \C$ by \[
\Phi_f (\omega) := \int_X f \, \dd \omega \,.
 \] 
Clearly the assignment $f\mapsto \Phi_f$ is isometric for the uniform norms: \[
\| \Phi_f \|_\infty = \|f\|_\infty \,.
\]
If $f \in \Cz(X)$, then $\Phi_f \in C(\Meas(X))$ by definition of the weak-star topology. Although not immediately obvious, it is also true that, if $f \in \Borel(X)$, then   $\Phi_f \in \Borel(\Meas(X))$.  The latter is a consequence of the proof of   Lemma~2.3 in \cite{varadarajan}. For the reader's convenience, we give a proof here.

\begin{propn}\label{measurability}
Let $X$ be a second-countable locally compact  space. Then, the following sigma-algebras on $\Meas(X)$ are equal:
\begin{enumerate}[(i)]
\item the Borel sigma-algebra generated by the compact metrizable topology that $\Meas(X)$ inherits from the  weak-star topology on $\Cz(X)^*$,
\item the ``customary sigma-algebra'' of \cite{varadarajan}. By definition, this is  the smallest sigma-algebra on $\Meas(X)$ for which  $\omega \mapsto \omega(A)$ is a measurable function for every Borel set $A \subset X$. 
\end{enumerate}
Moreover, this sigma-algebra is such that, for every $f \in \Borel(X)$, the function $\Phi_f: \Meas(X) \to \C$ defined by $\Phi_f(\omega) := \int_X f \, \dd \omega$ is measurable (obviously $\Phi_f$ is affine and $\|\Phi_f\|_\infty=\|f\|_\infty$).
\end{propn}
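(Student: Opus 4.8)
The plan is to prove the claimed equality of sigma-algebras by a double inclusion and to read off the measurability of $\Phi_f$ along the way. Write $\cB_{\mathrm{w}}$ for sigma-algebra (i), generated by the weak-star topology, and $\cB_{\mathrm{c}}$ for the customary sigma-algebra (ii). For the inclusion $\cB_{\mathrm{c}} \subseteq \cB_{\mathrm{w}}$ the crucial observation is that for every open $U \subseteq X$ the evaluation $\omega \mapsto \omega(U)$ is lower semicontinuous for the weak-star topology, hence $\cB_{\mathrm{w}}$-measurable. Indeed, inner regularity (available since $X$ is second-countable locally compact) together with Urysohn's lemma gives
\[
\omega(U) = \sup\{ \Phi_\varphi(\omega) : \varphi \in \Cc(X),\ 0 \le \varphi \le \1,\ \supp \varphi \subseteq U\},
\]
and since each $\Phi_\varphi$ is weak-star continuous, this supremum of continuous functions is lower semicontinuous. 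I would then run a Dynkin-system argument: the collection of Borel sets $A \subseteq X$ for which $\omega \mapsto \omega(A)$ is $\cB_{\mathrm{w}}$-measurable contains $X$ (itself open), is closed under complementation because $\omega(X \setminus A) = \omega(X) - \omega(A)$ with both terms measurable, and is closed under countable disjoint unions by countable additivity of $\omega$. As the open sets form a generating pi-system lying inside this collection, the pi-lambda theorem forces the collection to contain all Borel sets, so every generator $\omega \mapsto \omega(A)$ of $\cB_{\mathrm{c}}$ is $\cB_{\mathrm{w}}$-measurable.

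Next I would show that $\Phi_f$ is $\cB_{\mathrm{c}}$-measurable for every $f \in \Borel(X)$, which simultaneously delivers the final ``moreover'' assertion and sets up the reverse inclusion. For an indicator $f = \1_A$ one has $\Phi_f(\omega) = \omega(A)$, which is $\cB_{\mathrm{c}}$-measurable by the very definition of $\cB_{\mathrm{c}}$. By linearity this passes to simple Borel functions, and since any $f \in \Borel(X)$ is a uniform limit of simple Borel functions with
\[
|\Phi_{s}(\omega) - \Phi_{f}(\omega)| \le \|s - f\|_\infty\, \omega(X) \le \|s - f\|_\infty,
\]
the function $\Phi_f$ is a uniform, hence pointwise, limit of $\cB_{\mathrm{c}}$-measurable functions and is therefore $\cB_{\mathrm{c}}$-measurable.

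For the inclusion $\cB_{\mathrm{w}} \subseteq \cB_{\mathrm{c}}$ I would invoke separability. The weak-star topology on $\Meas(X)$ is by definition the initial topology generated by the maps $\{\Phi_f : f \in \Cz(X)\}$, and since $\Cz(X)$ is separable in the uniform norm it is already generated by $\{\Phi_{f_n}\}$ for a countable dense sequence $\{f_n\} \subseteq \Cz(X)$. Because $\Meas(X)$ is second-countable, its Borel sigma-algebra $\cB_{\mathrm{w}}$ is exactly the sigma-algebra generated by this countable family of maps; and each $\Phi_{f_n}$ is $\cB_{\mathrm{c}}$-measurable by the previous paragraph. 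Hence $\cB_{\mathrm{w}} \subseteq \cB_{\mathrm{c}}$, which completes the equality $\cB_{\mathrm{w}} = \cB_{\mathrm{c}}$.

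The step I expect to be the main obstacle is the first one: pinning down the regularity and Urysohn approximation so that $\omega \mapsto \omega(U)$ is visibly lower semicontinuous, and then the careful bookkeeping of the Dynkin system in the sub-probability setting. In particular, because the total mass $\omega \mapsto \omega(X)$ is not constant on $\Meas(X)$, one must separately note that it is measurable (which it is, $X$ being open) in order to justify the closure under complementation; everything else is routine once the lower semicontinuity is in hand.
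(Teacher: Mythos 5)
Your proof is correct and follows essentially the same architecture as the paper's: a double inclusion, with a $\pi$-$\lambda$ (Dynkin) argument for the containment of the customary sigma-algebra in the weak-star Borel sigma-algebra, and second-countability plus a countable generating family for the reverse containment. The only variation is cosmetic: you seed the Dynkin argument with open sets, proving $\omega \mapsto \omega(U)$ lower semicontinuous via inner regularity and Urysohn, whereas the paper seeds it with compact sets, realizing $\1_K$ as a uniformly bounded pointwise limit of functions in $\Cz(X)$ and invoking dominated convergence; both routes are valid given the automatic regularity of Borel measures on a second-countable locally compact space.
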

We remark that this proposition in particular shows that the Borel structure of $\Meas(X)$ only depends on the Borel structure of $X$ and not on its topology.
\begin{proof}
First we show (ii) is contained in (i).  

Let $\F$ denote the set of all $f \in \Borel(X)$ for which  $\Phi_f \in \Borel(\Meas(X))$. If $f \in \Cz(X)$, then $\Phi_f \in C(\Meas(X))$ by definition of the weak-star topology, so  
\[
\Cz(X) \subset \F \subset \Borel(X)\,.
\]
Obviously $\F$ is closed under taking $\C$-linear combinations. We claim $\F$ is also closed under uniformly bounded pointwise limits. Indeed, suppose that $(f_n)$ is a uniformly bounded sequence in $\F$ converging pointwise to $f \in \Borel(X)$. Then, by the dominated convergence theorem, for all $\omega \in \Meas(X)$ we have 
\[
\int_X f_n \, \dd \omega \to \int_X f \, \dd \omega \,.
\]
This means that $\Phi_{f_n}$ converges pointwise to $\Phi_f$.  This implies that $\Phi_{f} \in \Borel(\Meas(X))$ and hence $f\in \F$. 

Let $\mathcal{D}$ denote the set of Borel subsets $D \subset X$ for which $\1_D \in \F$. Observe that $\mathcal{D}$ is a $\lambda$-system\footnote{By definition, this means: (i) $X \in \mathcal{D}$, (ii) if $D_1, D_2 \in \mathcal{D}$ and $D_1 \subset D_2$ then $D_2 \setminus D_1 \in \mathcal{D}$, and (iii) if $D_1 \subset D_2 \subset \ldots$ all belong to $\mathcal{D}$, then $\bigcup_n D_n \in \mathcal{D}$.}. Every compact set $K \subset X$ is a $G_\delta$ set and, moreover, $\1_K$ may  be expressed as the pointwise limit of a sequence of continuous functions $f_n \in \Cz(X)$. Thus, $\1_K \in \F$ and $K \in \mathcal{D}$ for all compact sets $K \subset X$. Now, compact subsets of $X$ are a $\pi$-system\footnote{A nonempty collection of sets that is closed under intersection.}, so  Sierpi\'{n}ski--Dynkin's $\lambda$-$\pi$ theorem implies that $\mathcal{D}$ contains the sigma-algebra on $X$ generated by the compact sets. Thus,  $\mathcal{D}$ is the Borel sigma-algebra of $X$ and  the containment of (ii) in (i) is established.  

For the ``moreover'' statement, note that, by linearity, we have  that $\mathcal{F}$ contains all the simple functions so, since $\F$  is closed under uniformly bounded pointwise limits, it follows that $\F = \Borel(X)$.

It remains to show (i) is contained in (ii) i.e. that the customary sigma-algebra contains all the weak-star open subsets of $\Meas(X)$. Crucially, since $\Meas(X)$ is second-countable,  it is enough to check that the  open sets belonging to the usual subbase for the weak-star topology belong to the customary sigma-algebra (see Lemma~\ref{subbaselemma} below). That is, it suffices to check that, given $f\in \Cz(X)$ and  $V \subset \C$ open, the subbasic open set 
\[
\{ \omega \in \Meas(X) : \int_X f  \, \dd \omega  \in V\}
\]
is measurable for the customary sigma-algebra, i.e. $\omega \mapsto \int_X f \, \dd \omega$ is measurable for the customary sigma-algebra. Indeed, this holds if $f$ is instead a simple function by definition of the customary sigma-algebra. Since every $f\in \Borel(X)$, and in particular every $f \in \Cz(X)$,  is the uniform limit of a sequence of simple functions and measurable functions are closed under uniform limits (even pointwise limits), we get the desired conclusion.
\end{proof}

\begin{lemma}\label{subbaselemma}
Let $Y$ be a set and let $\mathscr{S}$ be collection of subsets of $Y$ such that the topology $\tau$ generated by $\mathscr{S}$ is second-countable. Then, the sigma-algebra   $\Sigma(\mathscr{S})$ generated by $\mathscr{S}$ coincides with the sigma-algebra $\Sigma(\tau)$ generated by $\tau$.
\end{lemma}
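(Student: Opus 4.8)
The plan is to prove the two inclusions $\Sigma(\mathscr{S}) \subseteq \Sigma(\tau)$ and $\Sigma(\tau) \subseteq \Sigma(\mathscr{S})$ separately. The first is immediate: every member of $\mathscr{S}$ is $\tau$-open by construction, so $\mathscr{S} \subseteq \tau \subseteq \Sigma(\tau)$, and since $\Sigma(\mathscr{S})$ is by definition the smallest sigma-algebra containing $\mathscr{S}$, we get $\Sigma(\mathscr{S}) \subseteq \Sigma(\tau)$. All the work is in the reverse inclusion, for which it suffices to show $\tau \subseteq \Sigma(\mathscr{S})$, i.e. that every $\tau$-open set lies in $\Sigma(\mathscr{S})$.

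First I would introduce the collection $\mathscr{B}_0$ of all finite intersections of members of $\mathscr{S}$, with $Y$ itself regarded as the empty intersection. By the definition of the topology generated by a subbase, $\mathscr{B}_0$ is a base for $\tau$; and since each element of $\mathscr{B}_0$ is a finite intersection of members of $\mathscr{S}$, we have $\mathscr{B}_0 \subseteq \Sigma(\mathscr{S})$. If every open set were a \emph{countable} union of elements of $\mathscr{B}_0$, we would be finished, because $\Sigma(\mathscr{S})$ is closed under countable unions. The crux is therefore to upgrade ``union'' to ``countable union'', and this is precisely where second-countability enters.

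To that end I would invoke (and prove) the standard fact that, in a second-countable space, every base admits a countable subfamily that is still a base. Concretely, fix a countable base $\{C_n\}$ for $\tau$; for each pair $(m,n)$ for which some $B \in \mathscr{B}_0$ satisfies $C_m \subseteq B \subseteq C_n$, select one such set $B_{m,n}$. The family $\{B_{m,n}\}$ is countable and contained in $\mathscr{B}_0$, and a short argument shows it is again a base: given a point $x$ in an open set $U$, choose $C_n$ with $x \in C_n \subseteq U$, then $B \in \mathscr{B}_0$ with $x \in B \subseteq C_n$, then $C_m$ with $x \in C_m \subseteq B$, so that the pair $(m,n)$ is admissible and $x \in C_m \subseteq B_{m,n} \subseteq C_n \subseteq U$.

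With such a countable base $\{B_{m,n}\} \subseteq \mathscr{B}_0$ in hand, any open set $U$ equals the union of exactly those $B_{m,n}$ contained in it, which is a countable union of members of $\Sigma(\mathscr{S})$; hence $U \in \Sigma(\mathscr{S})$. This yields $\tau \subseteq \Sigma(\mathscr{S})$, whence $\Sigma(\tau) \subseteq \Sigma(\mathscr{S})$, and combined with the first inclusion gives the asserted equality. The only genuinely nontrivial step is the extraction of the countable base contained in $\mathscr{B}_0$; everything else is routine bookkeeping with the closure properties of sigma-algebras.
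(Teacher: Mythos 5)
Your proof is correct and follows essentially the same route as the paper: both establish the easy inclusion, pass to the base of finite intersections of members of $\mathscr{S}$, extract a countable subfamily that is still a base using second-countability, and conclude that every open set is a countable union of sets in $\Sigma(\mathscr{S})$. The only difference is that you supply the details of the standard countable-subbase extraction, which the paper simply asserts.
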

\begin{proof}
Obviously $\Sigma(\mathscr{S}) \subset  \Sigma(\tau)$. The basis $\mathscr{B}$ for the topology $\tau$ consisting of intersections of finite subcollections of  $\mathscr{S}$ satisfies $\mathscr{B} \subset \Sigma(\mathscr{S})$. Second-countability of $\tau$ implies there is a countable basis $\mathscr{B}' \subset \mathscr{B}$. Every open set in $\tau$ is then a countable union of elements of $\mathscr{B}' \subset \Sigma(\mathscr{S})$ demonstrating that $\tau \subset \Sigma(\mathscr{S})$, whence $\Sigma(\tau) \subset \Sigma(\mathscr{S})$.
\end{proof}

Note the second-countability hypothesis cannot be dropped in the above lemma. Consider for example the collection of singleton subsets of an uncountable set. The sigma-algebra generated by the singletons is the countable-cocountable sigma-algebra. The topology generated by the singletons is the discrete topology.

There is another sigma-algebra one could consider putting on $\Meas(X)$. Namely, the Borel sigma-algebra obtained by viewing $\Meas(X)$ as a subspace of $\Borel(X)^*$ in the weak-star topology. This sigma-algebra is generally different.

\begin{example}
Let $X=[0,1]$. Then, the weak-star topology  that $\Meas(X)$ inherits from $\Borel(X)^*$ generates a larger Borel sigma-algebra than the weak-star topology that $\Meas(X)$ inherits from $C(X)^*$. To see this, note that the latter sigma-algebra consists of continuum-many sets. Indeed, a transfinite induction argument shows that any sigma-algebra that can be generated by continuum-many sets or fewer contains at most continuum-many sets. On the other hand, for any subset $S \subset [0,1]$ whatsoever, one has that the set $U_S \subset \Meas(X)$ of  measures $\omega$ such that $\omega(\{x\}) >0$ for at least one $x \in S$ is open for the weak-star topology $\Meas(X)$ inherits from $\Borel(X)^*$. Thus, this topology and the sigma-algebra it generates contain at least as many sets as the power set of $[0,1]$.
\end{example}

The following well-known lemma is one way to give rigorous sense to the assertion that the algebra of bounded Borel functions is ``generated'' by the algebra of continuous functions. For convenience, we include a proof.

\begin{lemma}\label{CgenB}
Let $X$ be a second-countable locally compact  space. Then, $\Borel(X)$ is the smallest algebra containing $\Cz(X)$ that is closed under taking pointwise limits of uniformly bounded sequences.
\end{lemma}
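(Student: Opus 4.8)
The plan is to write $\mathcal{A}$ for the smallest algebra containing $\Cz(X)$ that is closed under uniformly bounded pointwise limits (this is well-defined as the intersection of all such algebras, the family being nonempty since $\Borel(X)$ belongs to it) and to prove $\mathcal{A}=\Borel(X)$ by two inclusions. For $\mathcal{A}\subseteq\Borel(X)$, I would simply note that $\Borel(X)$ is an algebra containing $\Cz(X)$ and that a uniformly bounded pointwise limit of bounded Borel functions is again a bounded Borel function (pointwise limits preserve measurability, and the uniform bound passes to the limit), so $\Borel(X)$ is one of the algebras being intersected.

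The substance is $\Borel(X)\subseteq\mathcal{A}$, which I would obtain by the same Sierpi\'{n}ski--Dynkin ($\lambda$-$\pi$) scheme used in the proof of Proposition~\ref{measurability}. Set $\mathcal{D}:=\{D\subseteq X \text{ Borel} : \1_D\in\mathcal{A}\}$ and verify $\mathcal{D}$ is a $\lambda$-system: $X\in\mathcal{D}$ because $\sigma$-compactness of $X$ yields $f_n\in\Cc(X)$ with $0\le f_n\le 1$ increasing pointwise to the constant $1$, so $\1_X=\lim_n f_n\in\mathcal{A}$; closure under proper differences holds since $\mathcal{A}$ is linear and $\1_{D_2\setminus D_1}=\1_{D_2}-\1_{D_1}$; and closure under increasing unions holds since $\1_{\bigcup_n D_n}=\lim_n \1_{D_n}$ is a uniformly bounded pointwise limit. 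I would then check every compact $K\subseteq X$ lies in $\mathcal{D}$: as $X$ is metrizable, $\1_K$ is the pointwise limit of a uniformly bounded decreasing sequence in $\Cc(X)\subseteq\Cz(X)$ built from the distance to $K$ cut off inside a precompact neighbourhood, so $\1_K\in\mathcal{A}$. Since the compact sets form a $\pi$-system, the $\lambda$-$\pi$ theorem gives that $\mathcal{D}$ contains the $\sigma$-algebra they generate, which equals the Borel $\sigma$-algebra because $X$ is $\sigma$-compact. Hence $\1_D\in\mathcal{A}$ for every Borel $D$; by linearity $\mathcal{A}$ contains all simple functions, and every $f\in\Borel(X)$ is a uniform (hence uniformly bounded pointwise) limit of simple functions of sup-norm at most $\|f\|_\infty$, so $f\in\mathcal{A}$.

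I expect the only delicate points to be the bookkeeping forced by non-compactness of $X$: exhibiting the constant $1$ and the indicators $\1_K$ of compact sets as uniformly bounded pointwise limits of functions in $\Cz(X)$, each of which relies on combining $\sigma$-compactness with local compactness and metrizability. The remainder is the routine $\lambda$-$\pi$ machinery, exactly parallel to Proposition~\ref{measurability}.
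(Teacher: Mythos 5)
Your proof is correct and follows essentially the same route as the paper's: both establish that the smallest such algebra exists, show indicators of compact sets belong to it, deduce that the Borel sets whose indicators lie in the algebra form (or contain) the whole Borel $\sigma$-algebra, and finish via uniform approximation by simple functions. The only cosmetic difference is that you invoke the Sierpi\'{n}ski--Dynkin $\lambda$-$\pi$ theorem (as the paper does in Proposition~\ref{measurability}), whereas the paper's proof of this lemma verifies directly that the relevant collection of sets is a $\sigma$-algebra, using the multiplicativity of the algebra for intersections; both are sound.
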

\begin{proof}
Note that $\Borel(X)$ is an algebra containing $\Cz(X)$  closed under taking pointwise limits of  uniformly bounded sequences and that the intersection of a collection of such algebras is another such algebra. It 
follows that a smallest such algebra $\F$ exists and that $\Cz(X) \subset \F \subset \Borel(X)$. Firstly, for  every compact set $K \subset X$, the indicator function $\1_K$ belongs to $\F$ since it can be expressed as the pointwise limit of a uniformly bounded sequence of continuous functions. Next, we argue that the collection of all Borel sets whose characteristic functions  belong to $\F$ is a sigma-algebra. Indeed, recalling $X$ is second-countable, $\1_X \in \F$. If $\1_A \in \F$, then   $\1_{X \setminus A} =\1_X- \1_A \in \F_A$. If $\1_A, \1_B \in \F$, then $\1_{A \cap B} = \1_A \1_B \in \F$.  If $\1_{A_n} \in \F$ for $n=1,2,3,\ldots$ then $\1_A \in \F$ for $A=\bigcap_{n=1}^\infty A_n$ because 
the  pointwise limit of the uniformly bounded sequence $\1_{A_1} , \1_{A_1 \cap A_2}, \1_{A_1\cap A_2 \cap A_3}, \ldots$ is $\1_A$. Compact subsets of $X$ generate the whole Borel sigma algebra, so all indicator functions of Borel sets belong to $\F$. Finally, simple functions are norm-dense in $\Borel(X)$, so the result follows.
\end{proof}

We can now prove the following result which is important for our argument. Proposition~\ref{measurability}  ensures that the right hand side of the equation in the below proposition is meaningful when $f \in \Borel(X)$.

\begin{propn}\label{C(X)toB(X)}
Let $X$ be a second-countable locally compact  space. Fix $\m \in \Meas(X)$. Suppose that $\rho \in \Prob(\Meas(X))$ represents $\m$ in the sense that
\begin{align*}
\int_X f(y) \, \dd \m(y) = \int_{\Meas(X)} \int_X f(x) \, \dd \omega(x) \, \dd \rho(\omega)  && \text{ for all } f \in \Cz(X). 
\end{align*}
Then, the same equation furthermore holds for all $f \in \Borel(X)$. 
\end{propn}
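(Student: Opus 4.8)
The plan is to run the same functional-analytic bootstrap used in the proof of Proposition~\ref{measurability}. Let $\F$ denote the set of $f \in \Borel(X)$ for which the displayed representation equation holds. By hypothesis $\Cz(X) \subset \F$, and since both sides of the equation are linear in $f$, the set $\F$ is a $\C$-linear subspace of $\Borel(X)$. It therefore suffices to show that $\F$ is closed under uniformly bounded pointwise limits, and then that any $\C$-linear subspace of $\Borel(X)$ containing $\Cz(X)$ with this closure property must exhaust $\Borel(X)$.

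The heart of the matter is the closure of $\F$ under uniformly bounded pointwise limits, and this is where the two sides of the equation are treated asymmetrically. Suppose $f_n \in \F$ is uniformly bounded, say $\|f_n\|_\infty \leq M$ for all $n$, and $f_n \to f$ pointwise. For the left-hand side, dominated convergence applied to $\m$ gives $\int_X f_n \, \dd \m \to \int_X f \, \dd \m$. For the right-hand side I would argue in two stages. First, for each fixed $\omega \in \Meas(X)$, dominated convergence applied to $\omega$ gives $\Phi_{f_n}(\omega) = \int_X f_n \, \dd \omega \to \int_X f \, \dd \omega = \Phi_f(\omega)$, so that $\Phi_{f_n} \to \Phi_f$ pointwise on $\Meas(X)$. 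Second --- and this is the step that makes the whole argument go through --- each $\Phi_{f_n}$ is $\rho$-measurable by Proposition~\ref{measurability}, and $|\Phi_{f_n}(\omega)| \leq M\,\omega(X) \leq M$ for every $\omega$, so the constant $M$ dominates on the probability space $(\Meas(X),\rho)$; a second application of dominated convergence yields $\int_{\Meas(X)} \Phi_{f_n} \, \dd \rho \to \int_{\Meas(X)} \Phi_f \, \dd \rho$. Passing to the limit in the equation for each $f_n$ then gives it for $f$, whence $f \in \F$. I expect this double use of dominated convergence --- inner in each $\omega$, outer in $\rho$, the latter relying on Proposition~\ref{measurability} even to make sense of $\int_{\Meas(X)} \Phi_{f_n} \, \dd \rho$ --- to be the main technical point.

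It remains to see that a $\C$-linear subspace $\F \subset \Borel(X)$ containing $\Cz(X)$ and closed under uniformly bounded pointwise limits is all of $\Borel(X)$, and here I would reuse the $\lambda$-$\pi$ argument from Proposition~\ref{measurability}. Since $X$ is $\sigma$-compact, the constant function $\1_X$ is a uniformly bounded pointwise limit of functions in $\Cc(X) \subset \F$, so $\1_X \in \F$; likewise, for any compact $K \subset X$ the indicator $\1_K$ is such a limit, so $\1_K \in \F$. The collection $\mathcal{D} := \{ D \subset X \text{ Borel} : \1_D \in \F\}$ is then a $\lambda$-system --- closed under proper differences by linearity and under increasing unions by the bounded pointwise limit property --- containing the $\pi$-system of compact sets, so the Sierpi\'{n}ski--Dynkin theorem gives that $\mathcal{D}$ is the full Borel sigma-algebra. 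By linearity $\F$ contains every simple function, and since simple functions are uniformly dense in $\Borel(X)$ we conclude $\F = \Borel(X)$. Equivalently, one could invoke the functional monotone class theorem directly, taking the multiplicative system to be the algebra $\Cz(X)$, which generates the Borel sigma-algebra on $X$.
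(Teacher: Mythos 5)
Your proof is correct and follows essentially the same route as the paper's: define the class $\F$ of functions for which the identity holds, use dominated convergence twice (once in each $\omega$ and once in $\rho$, with Proposition~\ref{measurability} guaranteeing the measurability of $\Phi_{f_n}$) to show $\F$ is closed under uniformly bounded pointwise limits, and conclude by a monotone-class argument. The only difference is the final step, where the paper cites Lemma~\ref{CgenB} (stated for algebras) while you rerun the $\lambda$-$\pi$ argument for linear subspaces directly --- which is in fact slightly more careful, since $\F$ is a linear subspace but not obviously closed under products.
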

\begin{proof}
First, Proposition~\ref{measurability} ensures that $\omega \mapsto \int_X f \, \dd  \omega$ is a bounded Borel function on $\Meas(X)$ for every $f \in \Borel(X)$. Thus, the right hand side of the desired equation at least makes sense for $f \in \Borel(X)$. Now, let $\mathcal{F} \subset \Borel(X)$  denote the collection of functions $f$ for which the desired equality holds. By assumption, $\Cz(X) \subset \mathcal{F}$. Using the dominated convergence theorem, it is also easy to see that $\mathcal{F}$ is closed under uniformly bounded pointwise limits.  Thus, by Lemma~\ref{CgenB}, $\F = \Borel(X)$. 
\end{proof}

\section{Review of Choquet theory}\label{sec:choquet}

Choquet theory is concerned with expressing the points of a compact convex subset of  a locally convex topological vector space as barycentres of probability measures on the set of extreme points. A major application, and the one which is relevant here, is   the  decomposition of invariant measures as weighted combinations of ergodic measures. We provide a brief of review of the relevant aspects of this theory.

\begin{thm}[Choquet theorem]\label{choquetthm}
Let $K$ be a compact convex metrizable subset of a (Hausdorff) locally convex vector space $V$. Then, for each $z_0 \in K$, there exists $\rho \in \Prob(K)$   supported in the set $E \subset K$ of extreme points  such that 
$\Phi(z_0) = \int_E \Phi(z) \, \dd  \rho(z)$  for every continuous affine function $\Phi$ on $K$. \qed
\end{thm}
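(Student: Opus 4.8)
The plan is to realise $\rho$ as a probability measure on $K$ that is \emph{maximal} in the Choquet partial order and that dominates the point mass at $z_0$, and then to exploit metrizability to force such a maximal measure onto the extreme boundary. The first ingredient is the \textbf{barycenter map}: for $\mu \in \Prob(K)$, an application of Hahn--Banach separation together with compactness of $K$ produces a unique point $b(\mu) \in K$ satisfying $\Phi(b(\mu)) = \int_K \Phi \, \dd\mu$ for every continuous affine $\Phi$. In this language the conclusion of the theorem is exactly the assertion that $b(\rho) = z_0$ for a suitable $\rho \in \Prob(K)$ carried by $E$.

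First I would introduce the Choquet ordering on $\Prob(K)$: declare $\mu \preceq \nu$ whenever $\int_K g \, \dd\mu \le \int_K g \, \dd\nu$ for every continuous convex $g \colon K \to \R$. Because an affine function is both convex and concave, comparable measures have the same barycenter; in particular every $\nu \succeq \delta_{z_0}$ satisfies $b(\nu) = z_0$. So it suffices to produce a $\preceq$-maximal $\rho$ with $\delta_{z_0} \preceq \rho$. Existence follows from Zorn's lemma once one checks that every $\preceq$-chain admits an upper bound, and here I would use that $\Prob(K)$ is weak-star compact (as $K$ is compact metrizable), so that a chain has a weak-star cluster point dominating it.

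The heart of the argument, and the step where metrizability is indispensable, is to show that a maximal $\rho$ is concentrated on $E$. Since $K$ is compact metrizable, $C(K)$ is separable and the continuous affine functions separate points; choosing affine $a_n$ with $\|a_n\|_\infty \le 1$ separating points, the series $f := \sum_n 2^{-n} a_n^2$ defines a continuous \emph{strictly} convex function on $K$. For continuous $g$ let $\hat{g}(x) := \inf\{ h(x) : h \text{ continuous affine}, \ h \ge g \}$ be its upper envelope, an upper semicontinuous concave function with $g \le \hat{g}$. I would then use two facts: (i) maximality of $\rho$ forces $\int_K (\hat{f} - f)\, \dd\rho = 0$; and (ii) strict convexity of $f$ gives $\{x : \hat{f}(x) = f(x)\} \subseteq E$, since any non-extreme $x = \tfrac12(y+z)$ with $y \ne z$ satisfies $h(x) = \tfrac12\bigl(h(y)+h(z)\bigr) \ge \tfrac12\bigl(f(y)+f(z)\bigr) > f(x)$ for every affine majorant $h \ge f$, whence $\hat{f}(x) > f(x)$. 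Combining these, and noting $\hat{f}-f \ge 0$, we get $\rho\bigl(\{\hat f > f\}\bigr) = 0$, i.e. $\rho(K \setminus E) = 0$. Finally, in the metrizable setting $E$ is a $G_\delta$ (hence Borel) set, so ``$\rho$ is supported in $E$'' is meaningful, and since $\rho(K\setminus E)=0$ the integrals of continuous affine $\Phi$ are unchanged on restricting to $E$, giving $\Phi(z_0) = \int_E \Phi \, \dd\rho$.

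The main obstacle is fact (i), the maximality criterion. Proving that a $\preceq$-maximal measure annihilates $\hat f - f$ requires relating the order to upper envelopes: the natural route is to show that whenever $\int_K(\hat{g}-g)\,\dd\mu > 0$ one can strictly increase $\int_K g \, \dd\mu$ within the order by a measure-splitting (dilation) construction, contradicting maximality; this in turn rests on semicontinuity and approximation properties of $\hat{g}$ that must be set up carefully. By comparison, the barycenter existence, the Zorn step, and the strict-convexity computation in (ii) are routine. A cleaner variant specific to the metrizable case sidesteps the general order theory: one directly maximises the weak-star continuous functional $\mu \mapsto \int_K f \, \dd\mu$ over the compact set $\{\mu : b(\mu) = z_0\}$ and argues by strict convexity that a maximiser cannot charge non-extreme points, at the cost of a measurable disintegration argument.
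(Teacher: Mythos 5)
First, note that the paper does not actually prove this statement: it is quoted as a known result with the proof deferred to \S3 of Phelps's monograph, so there is no in-paper argument to compare yours against. Judged on its own terms, your outline reproduces the standard architecture of the metrizable Choquet existence theorem: the barycentre map, the Choquet order and the Zorn's lemma step, the strictly convex function $f=\sum_n 2^{-n}a_n^2$ built from a separating sequence of continuous affine functions (this is exactly where metrizability, via separability of $C(K)$, enters), the upper envelope $\hat f$, the inclusion $\{\hat f=f\}\subseteq E$ via the midpoint computation, and the observation that $E$ is a $G_\delta$ so that ``supported in $E$'' is meaningful. All of those steps are correct as you state them.

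The gap is the one you flag yourself, and it is not a minor one: the claim that a $\preceq$-maximal $\rho$ satisfies $\int_K(\hat f-f)\,\dd\rho=0$ is Mokobodzki's maximality criterion, and proving it (via dilating the part of $\rho$ living where $\hat f>f$) is the entire technical content of the theorem; without it your argument only produces a maximal measure with barycentre $z_0$ and says nothing about where it is concentrated. Two remarks on closing the gap. First, for the statement at hand you do not need the criterion for all convex $g$: it suffices to exhibit \emph{one} $\rho$ with barycentre $z_0$ and $\rho(\hat f)=\rho(f)$ for your single strictly convex $f$. Second, the ``cleaner variant'' you mention at the end is essentially Phelps's actual proof and requires no disintegration: define $L(a+rf):=a(z_0)+r\hat f(z_0)$ on the subspace $A(K)+\R f\subseteq C(K)$, where $A(K)$ is the space of continuous affine functions; check that $L\le p$ for the sublinear functional $p(g):=\hat g(z_0)$ (the case $r<0$ uses $r\hat f\le rf$); extend by Hahn--Banach; observe that the extension is positive because $g\le 0$ implies $\hat g\le 0$, hence is given by a probability measure $\rho$. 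Then $\rho(a)=a(z_0)$ for all $a\in A(K)$, $\rho(f)=\hat f(z_0)$, and $\rho(\hat f)\le\inf\{\rho(h):h\in A(K),\ h\ge f\}=\hat f(z_0)$, so $\rho(\hat f-f)=0$ and your strict-convexity computation finishes the argument. I would recommend writing the proof this way rather than developing the order-theoretic machinery, which is only needed for the non-metrizable Choquet--Bishop--de~Leeuw version that the paper explicitly says it does not require.
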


\begin{rmk}
In general, it may not be the case that the measure $\rho$ is unique. This already fails when $K$ is a square in $\R^2$. When uniqueness does hold, $K$ is referred to as a \textbf{Choquet simplex}.
\end{rmk}

The reader may refer to the monograph \cite{phelps} for a proof of Theorem~\ref{choquetthm} (see \S3 of \cite{phelps}) as well as an  exposition of the wider theory. Assuming, as we did, that $K$ is metrizable,  it is easy to show the set of extreme points $E$ is a $G_\delta$ subset of $K$ (see \cite{phelps}, Proposition~1.3) so that ``supported in'' makes sense above. 
In the nonmetrizable case, $E$ need not even be a Borel subset of $K$ and  additional work is needed to clarify what it means for a measure on $K$ to be    ``supported in $E$''. This extension of the Choquet theorem to the nonmetrizable case is usually referred to as the Choquet--Bishop--de Leeuw theorem. We will not need this extra generality.

The Choquet theorem has the following immediate consequence concerning the expression of  probability measures as barycentres.

\begin{cor}\label{choqformeas}
Let $X$ be a second-countable locally compact  space. Let $K \subset \Meas(X)$ be a weak-star closed  convex set. Then, for each $\m \in K$, there exists $\rho \in \Prob(K)$ supported  in the set $E \subset K$ of extreme points such that 
\[
\int_X f(x) \, \dd \m(x) = \int_E \int_X f(x) \, \dd  \omega(x) \, \dd  \rho(\omega) 
\]
for all $f \in \Cz(X)$. \qed
\end{cor}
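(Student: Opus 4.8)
The plan is to apply the Choquet theorem (Theorem~\ref{choquetthm}) essentially verbatim, with the ambient locally convex space taken to be $\Cz(X)^*$ equipped with its weak-star topology and with the distinguished point $z_0 = \m$. First I would check the hypotheses of that theorem are met for $K$. Since $X$ is second-countable, $\Cz(X)$ is separable, so by Banach--Alaoglu the unit ball of $\Cz(X)^*$ is weak-star compact and metrizable; as recalled at the start of Section~\ref{sect:Borel}, its positive part $\Meas(X)$ is consequently a compact, convex, metrizable set. By hypothesis $K$ is convex and weak-star closed, and it sits inside the compact set $\Meas(X)$, so $K$ is itself weak-star compact, convex, and metrizable (being a subspace of a metrizable space). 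This is exactly the configuration $(V, K)$ demanded by Theorem~\ref{choquetthm}.

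Next I would record the relevant family of affine functionals. For each $f \in \Cz(X)$, the map $\Phi_f \colon \omega \mapsto \int_X f \, \dd \omega$ is affine, and it is weak-star continuous by the very definition of the weak-star topology; hence $\Phi_f$ restricts to a continuous affine function on $K$. Applying Theorem~\ref{choquetthm} to $K$ and the point $\m$ yields a measure $\rho \in \Prob(K)$ supported on the set $E$ of extreme points of $K$ with the property that $\Phi(\m) = \int_E \Phi(\omega)\, \dd\rho(\omega)$ for every continuous affine $\Phi$ on $K$. Specializing to $\Phi = \Phi_f$ gives
\[
\int_X f(x) \, \dd \m(x) = \Phi_f(\m) = \int_E \Phi_f(\omega) \, \dd\rho(\omega) = \int_E \int_X f(x)\, \dd\omega(x) \, \dd\rho(\omega),
\]
which is precisely the claimed identity, holding for all $f \in \Cz(X)$.

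Because each step is either a direct verification of a hypothesis or an immediate specialization of the conclusion, there is no genuine obstacle; the statement really is an immediate corollary. The one point that deserves care is the metrizability of $K$, since this is what licenses the metrizable form of the Choquet theorem and what makes the phrase ``supported in $E$'' meaningful (the extreme points $E$ forming a $G_\delta$ set); it is exactly here that second-countability of $X$, funneled through separability of $\Cz(X)$, is used.
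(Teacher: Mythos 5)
Your proposal is correct and is exactly the argument the paper intends: the corollary is stated with no written proof (it is flagged as an ``immediate consequence'' of Theorem~\ref{choquetthm}), and your route --- verify that $K$ is a compact convex metrizable subset of $(\Cz(X)^*,\text{weak-}*)$ via Banach--Alaoglu and separability of $\Cz(X)$, then specialize the Choquet conclusion to the continuous affine functionals $\Phi_f$ for $f\in\Cz(X)$ --- is precisely what the paper has in mind, as confirmed by the remark following the corollary noting that only these particular affine functions are being used.
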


\begin{rmk}
If $K$ is a compact convex subset of a locally convex topological vector space $V$, the most obvious supply of continuous affine functions on $K$ are the restrictions to $K$ of continuous affine functions on $V$, which is to say constant   shifts of continuous linear functionals.  In general, not all continuous affine functions on $K$ need to be of this form. Indeed, this can fail even in the setting of Corollary~\ref{choqformeas} above. The weak-star topology on  $\Cz(X)^*$  has dual $\Cz(X)$, but not every weak-star continuous affine function  on $K$ is necessarily of the form $\omega \mapsto c+\int_X f \, \dd \omega$ for some $f \in \Cz(X)$, $c \in \C$. See Example~\ref{affineex} below. Thus, Corollary~\ref{choqformeas} as we stated it did not leverage Theorem~\ref{choquetthm} to its fullest possible extent.   That being said,  specifying $\varphi(z_0)$ for all $\varphi \in V^*$ does of course determine $z_0 \in K$ uniquely, simply because $V^*$  separates the points of $V$.
\end{rmk}

\begin{example}\label{affineex}
Let $X=\N$. Let $K \subset \Meas(X)$ be the set of    $\omega \in \Meas(X)$   that satisfy 
\[
\omega(\{n\}) \leq \frac{1}{n2^n} \qquad \forall n \in \N \,.
\]
Then, $K$ is a closed convex subset of $\Meas(X)$ and $F(\omega) = \sum  n \omega(\{n\})$ defines a continuous affine function $F$ on $K$ that does not arise as integration against any element of $\Cz(X)$.
 \end{example}

Let  $X$ be a second-countable locally compact  space and let $S$ be some collection of homeomorphisms $X \to X$. Then,  the set $\Meas^S(X)$ of $\omega \in \Meas(X)$ satisfying $\theta_*(\omega)=\omega$ for all $\theta \in S$ is a closed convex subset of $\Meas(X)$. In particular, if $G$ is a group\footnote{Later we will take $G$ to be a topological group, but the topology of $G$ plays no role in the present discussion.} acting by homeomorphisms on $X$, then the set  $\MeasG(X)$ of $G$-invariant measures is a closed convex subset of $\Meas(X)$.

The set $\ProbG(X)$ of $G$-invariant probability measures on $X$ is a convex subset of $\MeasG(X)$. The two spaces are quite closely related;  $\MeasG(X)$ is the convex hull of $\ProbG(X)$ and the zero measure. If  $X$ is compact, then $\ProbG(X)$ is closed but,  in general, we can only say that $\ProbG(X)$ is a $G_\delta$  subset of $\MeasG(X)$. The set of extreme points of $\ProbG(X)$ is exactly the set $\Erg(X)$ of ergodic probability measures for the action (see \cite{bekka-mayer}, Proposition~3.1). The set of extreme points of $\MeasG(X)$ is $\Erg(X)\cup\{0\}$.

Concerning the expression of invariant probability measures as barycentres of ergodic probability measures, Choquet theory gives the following:

\begin{cor}
Let $X$ be a second-countable locally compact  space. Let $G$ be  group acting by homeomorphisms  on $X$. Let $\m \in \ProbG(X)$. Then, there exists a probability measure $\rho$ on $\Erg(X)$  such that for all $f \in \Cz(X)$ we have
\[
\int_X f(x) \, \dd \m(x) = \int_{\Erg(X)} \int_X f(x) \, \dd \omega(x) \, \dd \rho(\omega)\,.
\]
 \qed
\end{cor}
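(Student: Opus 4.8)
The plan is to apply the measure-theoretic Choquet result, Corollary~\ref{choqformeas}, to the weak-star closed convex set $K = \MeasG(X)$, and then to eliminate the possible contribution of the zero measure by passing to bounded Borel test functions via Proposition~\ref{C(X)toB(X)}.

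First I would observe that $\MeasG(X)$ is a weak-star closed convex subset of the compact metrizable space $\Meas(X)$, hence is itself compact convex metrizable, and that its set of extreme points is $\Erg(X) \cup \{0\}$, as recorded above. Applying Corollary~\ref{choqformeas} with $K = \MeasG(X)$ therefore produces a probability measure $\rho'$ on $\MeasG(X)$, supported on $\Erg(X) \cup \{0\}$, satisfying
\[
\int_X f \, \dd \m = \int_{\Erg(X) \cup \{0\}} \int_X f \, \dd \omega \, \dd \rho'(\omega)
\]
for all $f \in \Cz(X)$. Viewing $\rho'$ as an element of $\Prob(\Meas(X))$ supported on $K$, it represents $\m$ in the sense of Proposition~\ref{C(X)toB(X)}.

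The remaining task is to show the zero measure carries no $\rho'$-mass, so that $\rho'$ restricts to a probability measure on $\Erg(X)$. The hypothesis that $\m$ is a \emph{probability} measure, not merely a quasi-probability measure, is exactly what is needed here; the obstacle is that the natural test function $\1_X$ detecting total mass lies in $\Borel(X)$ but not in $\Cz(X)$ when $X$ is non-compact. To bridge this gap I would invoke Proposition~\ref{C(X)toB(X)} to upgrade the displayed representation from $\Cz(X)$ to all of $\Borel(X)$, and then take $f = \1_X$. Since $\omega \mapsto \omega(X) = \Phi_{\1_X}(\omega)$ is Borel measurable by Proposition~\ref{measurability}, this yields
\[
1 = \m(X) = \int_{\Erg(X) \cup \{0\}} \omega(X) \, \dd \rho'(\omega).
\]
Because $\omega(X) = 1$ for every $\omega \in \Erg(X)$ and $\omega(X) = 0$ for $\omega = 0$, the right-hand side equals $\rho'(\Erg(X))$, forcing $\rho'(\Erg(X)) = 1$ and $\rho'(\{0\}) = 0$; here $\Erg(X)$ is Borel, being the $G_\delta$ set of extreme points of $\MeasG(X)$ with the closed point $0$ removed.

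Finally I would set $\rho := \rho'|_{\Erg(X)}$, which is a probability measure on $\Erg(X)$ since $\rho'(\Erg(X)) = 1$, and observe that the displayed identity over $\Erg(X) \cup \{0\}$ collapses to the asserted identity over $\Erg(X)$ for every $f \in \Cz(X)$. The only genuinely nontrivial step is the elimination of the atom at $0$, which hinges entirely on the Borel extension provided by Proposition~\ref{C(X)toB(X)}; everything else is a direct application of the machinery already assembled.
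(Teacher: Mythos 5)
Your proposal is correct and follows essentially the same route as the paper: apply Corollary~\ref{choqformeas} to the closed convex set $\MeasG(X)$, whose extreme points are $\Erg(X)\cup\{0\}$, and then use $\m(X)=1$ to rule out mass at the zero measure before restricting to $\Erg(X)$. The paper dismisses that last step as ``easy to see''; your way of filling it in via Proposition~\ref{C(X)toB(X)} applied to $f=\1_X$ is valid.
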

\begin{proof}
If $X$ is compact, so that  $\ProbG(X)$ is a closed convex subset of $\Meas(X)$, this is a direct consequence of Corollary~\ref{choqformeas}. If $X$ is non-compact, we instead view $\m$ as lying in the closed convex set $\MeasG(X) \subset \Meas(X)$ and express  $\m$ as the barycentre of a probability measure $\rho$ on   $\Erg(X) \cup \{0\}$. Because $\m$ is a probability measure, it is easy to see $\rho(\{0\})=0$ so that $\rho$ is actually supported in $\Erg(X)$. 
\end{proof}

\begin{rmk}
Though peripheral to our needs, we mention that the measure $\rho$ on $\Erg(X)$ representing a given $\m \in \ProbG(X)$ is actually unique, i.e. the spaces $\Prob(\Erg(X))$ and $\ProbG(X)$ are in 1-1 correspondence. More briefly, at least in the case where $X$ is compact, one could say ``$\ProbG(X)$ is a Choquet simplex'' (usually Choquet simplices are required to be compact by definition).
\end{rmk}

\section{Spectral decomposition}\label{sect:spectral}

In this section, $G$ denotes a second-countable locally compact abelian group with a fixed Haar measure and $\widehat G$ denotes its Pontryagin dual. 
\subsection{Spectral measures and the spectral theorem} Suppose $U : G \to \bdd(\H)$ is a strongly-continuous unitary representation  of    $G$ on a Hilbert space $\H$. We review the constructions of the following associated objects, primarily with the aim of fixing terminology and notation:

\begin{itemize}
\item \emph{Spectral measures:} each $h \in \H$ determines  a positive Borel measure  $\sigma_h$ on $\widehat G$.
\item \emph{Projection-valued measure:} a map $B \mapsto P(B)$ from Borel subsets of $\widehat G$ to projections on $\H$.
\end{itemize}

One way to define $\sigma_h$ is as the unique positive measure satisfying: 
\begin{align}
\int_{\widehat G}  \chi(t) \, \dd \sigma_{h}(\chi) = \<U(t)h,h\> \text{ for all } t \in G,
\end{align}
using Bochner's theorem to justify the fact that these integrations  determine $\sigma_h$ uniquely.
A second (equivalent) approach to spectral measures, which we will make use of here, is to first pass from $U$ to its associated $*$-representation $\pi : \Cz(\widehat G) \to \bdd(\H)$ and define  $\sigma_h$ as the unique positive measure satisfying:
\begin{align}
\int_{\widehat G}  f  \, \dd \sigma_{h} = \<\pi(f)h,h\> \text{ for all }f \in \Cz(\widehat G),
\end{align}
appealing to the  Riesz--Markov--Kakutani representation theorem instead of Bochner's theorem. This second approach is conventional in operator theory and additional details can be found in a variety of sources such as \cite{folland,pedersen,williams}. We summarize the main relevant points below.

\begin{defn}
Given a strongly-continuous unitary representation of $U$ of $G$ on a Hilbert space $\H$, the \textbf{associated $*$-representation} $\pi : \Cz(\widehat G) \to \bdd(\H)$ is defined by 
\[
\pi(\widehat f) h = \int_G f(t) U(t) h  \, \dd t
\]
for $f \in \Cc(G)$, $h \in \H$ and extended  by continuity to $\Cz(\widehat G)$. Here, $\widehat f \in \Cz(\widehat G)$ denotes the Fourier transform of $f$ which we take to be 
\[
\widehat f(\chi) = \int_G f(t) \overline{\chi(t)} \, \dd t \,.
\]
\end{defn}

\begin{example}
Suppose above that  $G=\Z$ and make the identification  $\widehat G = \T$. Then there   is a single unitary $u \in \bdd(\H)$ such that $U(n)=u^n$ for all $n \in \Z$. The associated $*$-representation $\pi$ is  the usual continuous functional calculus: $f \mapsto f(u) : C(\T) \to \bdd(\H)$.
\end{example}

\begin{defn}\label{specY}
Let $Y$ be a second-countable locally compact  space. Let $\pi : \Cz(Y) \to \bdd(\H)$ be a nondegenerate $*$-representation. 
\begin{enumerate}[(a)]
\item Given $h \in \H$, the associated \textbf{spectral measure}  is the  positive Borel measure $\sigma_h$ on $Y$ uniquely determined by 
\[
\int_Y f(y) \, \dd\sigma_h (y) = \<\pi(f)h,h\> \qquad \forall f  \in \Cz( Y) \,.
\]
\item  The \textbf{canonical Borel extension} of $\pi$ is the $*$-representation $\overline \pi : \Borel(Y) \to \bdd(\H)$ of the larger 
algebra of bounded Borel functions on $Y$ such that, for all $f \in \Borel(Y)$, $\overline \pi(f)$ is the unique operator satisfying 
\[
\< \overline \pi(f)h,h \> = \int f(y) \, \dd\sigma_h(y) \qquad \forall h \in \H\,.
\]
See \cite[Proposition IX.1.12]{conway}.
\item For every Borel set $B \subset X$, the \textbf{spectral projection} is defined as $P(B) \coloneqq \overline \pi (\1_B)$ where $\1_B$ is the indicator function of $B$. 
\end{enumerate}
\end{defn}

\begin{example}\label{multopex}
Suppose above that $\H=L^2(\Omega,\mu)$ where $(\Omega,\Sigma,\mu)$ is a measure space and that there is a measurable map  $p:\Omega \to Y$ such that $\pi$ is the resulting multiplication representation: $\pi(f)h = (f \circ p) h$ for $f \in \Cz(Y)$, $h \in L^2(\Omega,\mu)$. Then:
\begin{itemize}
\item Given $h \in L^2(\Omega,\mu)$, the associated spectral measure  is the pushforward $\sigma_h = p_*(|h|^2 \mu)$. 
\item The canonical Borel extension is given by $\overline \pi (f) h =(f \circ p) h$ for $f \in \Borel(Y)$, $h \in L^2(\Omega,\mu)$.
\item For  $B \subset X$ and $h \in L^2(\Omega,m)$, we have $P(B)h= \1_{p^{-1}(B)} h$.
\end{itemize}
\end{example}

We note some simple relationships between the spectral measures and the spectral projections.
\begin{lemma}
In the setting of Definition~\ref{specY} above, let $B \subset Y$ be Borel and  fix $h \in \H$. Then,
\begin{enumerate}[(a)]
\item $\|P(B)h\|^2 = \sigma_h(B)$
\item $\sigma_h |^{\phantom{B}}_B = \sigma_{P(B)h}$.
\end{enumerate}
\end{lemma}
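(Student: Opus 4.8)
The plan is to exploit that the canonical Borel extension $\overline\pi : \Borel(Y) \to \bdd(\H)$ is a genuine $*$-representation, so that the spectral projection $P(B) = \overline\pi(\1_B)$ inherits the algebraic features of the indicator $\1_B \in \Borel(Y)$. Since $\1_B$ is self-adjoint and idempotent ($\overline{\1_B} = \1_B$ and $\1_B^2 = \1_B$), applying the homomorphism $\overline\pi$ shows $P(B)^* = P(B)$ and $P(B)^2 = P(B)$; that is, $P(B)$ is an orthogonal projection. Both parts then reduce to short manipulations of the defining relation $\langle \overline\pi(f) h, h\rangle = \int_Y f \, \dd\sigma_h$.

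For part (a), I would write $\|P(B)h\|^2 = \langle P(B)^* P(B) h, h\rangle = \langle P(B) h, h\rangle$, using that $P(B)$ is a projection. The defining relation of $\overline\pi$ applied to $f = \1_B$ then gives $\langle P(B)h, h\rangle = \langle \overline\pi(\1_B)h, h\rangle = \int_Y \1_B \, \dd\sigma_h = \sigma_h(B)$, which is the claim.

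For part (b), the plan is to identify $\sigma_{P(B)h}$ by testing against $f \in \Cz(Y)$ and invoking uniqueness of spectral measures. Starting from $\int_Y f \, \dd\sigma_{P(B)h} = \langle \pi(f) P(B)h, P(B)h\rangle = \langle P(B)\,\overline\pi(f)\,P(B)h, h\rangle$ (using $\pi(f)=\overline\pi(f)$ on $\Cz(Y)$ and $P(B)^*=P(B)$), I would use multiplicativity of $\overline\pi$ together with the pointwise identity $\1_B f \1_B = f \1_B$ to collapse $P(B)\,\overline\pi(f)\,P(B) = \overline\pi(\1_B f \1_B) = \overline\pi(f\1_B)$. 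The defining relation then yields $\langle \overline\pi(f\1_B)h,h\rangle = \int_Y f\1_B \, \dd\sigma_h = \int_B f \, \dd\sigma_h = \int_Y f \, \dd(\sigma_h|_B)$. Since $\sigma_h$ is a finite positive measure (its total mass is $\|h\|^2$ by nondegeneracy of $\pi$), both $\sigma_{P(B)h}$ and $\sigma_h|_B$ are finite positive Borel measures on $Y$ agreeing against every $f \in \Cz(Y)$, so Riesz--Markov uniqueness forces them to coincide.

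The calculations are routine; the one input deserving care is that $\overline\pi$ is multiplicative on all of $\Borel(Y)$ --- a true $*$-homomorphism rather than merely a linear map --- so that the sandwiching step $P(B)\,\overline\pi(f)\,P(B) = \overline\pi(f\1_B)$ is legitimate. This is exactly the content of the canonical Borel extension cited in Definition~\ref{specY} (Conway, Prop.~IX.1.12), which I would simply quote rather than reprove. A secondary bookkeeping point is confirming that $\sigma_h|_B$ is finite so that the uniqueness clause of Riesz--Markov applies, but this is immediate from $\sigma_h(Y)=\|h\|^2 < \infty$.
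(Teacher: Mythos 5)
Your proof is correct and follows essentially the same route as the paper's: part (a) reduces to $\langle P(B)h,h\rangle=\int_Y \1_B\,\dd\sigma_h$ via the projection property, and part (b) is the same sandwiching computation $\langle \pi(f)\overline\pi(\1_B)h,\overline\pi(\1_B)h\rangle=\langle\overline\pi(f\1_B)h,h\rangle$ using multiplicativity and self-adjointness of the canonical Borel extension. You simply make explicit the justifications (that $P(B)$ is an orthogonal projection, and the Riesz--Markov uniqueness step) that the paper leaves implicit.
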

\begin{proof}
Note that
\[
\|P(B)h\|^2 = \< P(B)h,h\> = \int_Y \1_B \sigma_h = \sigma_h(B) \,,
\]
and that,  for  any $f\in C(Y)$, we have 
\[
\int_Y f \, \dd \sigma_{P(B)h} = \< \pi(f) \overline\pi(\1_B) h, \overline \pi(\1_B)h\> = \< \overline \pi (f \cdot \1_B) h,h\> = \int_Y f \cdot \1_B \, \dd \sigma_h = \int_B f \, \dd \sigma_h\,.
\]
\end{proof}

\begin{rmk}\label{supprmk}
Condition~(a) above shows that the family of spectral measures $\{\sigma_h :h \in \H\}$ and the projection-valued measure $P$ of  $\pi$ determine one another uniquely (since orthogonal projections are  determined by their kernels). It also shows that $P(B)=0$ if and only if $\sigma_h(B) =0$ for all $h \in \H$ or, equivalently, $P(B)=\id_\H$ if and only if $\sigma_h$ is supported in $B$ for all $h \in \H$. 
\end{rmk}

\subsection{Dynamical spectrum}

\begin{defn}\label{spectraldef}
Let $\alpha$ be a continuous action of a  second-countable locally compact abelian group $G$ on a second-countable locally compact  space $X$.  Fix an invariant measure $\m \in \MeasG(X)$.
\begin{enumerate}[(a)]
\item $U_\m$ denotes the corresponding strongly-continuous    representation  of $G$ on $L^2(X,\m)$ defined by $(U_\m(t)h)(x)=h(\alpha(-t,x))$ for $t \in G$, $x \in X$.
\item $\pi_\m : \Cz(\widehat G) \to \bdd(L^2(X,\m))$ denotes  the \textbf{associated $*$-representation}. Note that, if  $f=\widehat g$ for $g \in \Cc(G)$ and $\varphi  \in \Cz(X)$, then $\pi_\m(f)[\varphi]_\m = [g*\varphi]_\m$ where the convolution $g*\varphi \in \Cz(X)$ is defined by 
\[
(g*\varphi)(x) \coloneqq \int_G g(t) \varphi(\alpha(-t,x)) \, \dd t \qquad \forall x \in X \,.
\]
\item  $\sigma^\m_h$ denotes the \textbf{spectral measure} on $\widehat G$ associated to $h \in L^2(X,\m)$. 

Note that 
\[
\| \sigma^\m_h\| = \|h\|^2_\m
\]
so that $\sigma^\m_h \in \Meas(\widehat G)$ if $\|h\|_\m \leq 1$. 

Note also that, if $f = \widehat g$ where $g \in \Cc(G)$ and $\varphi \in \Cz(X)$, then 
\[
\int_{\widehat G}f(\chi) \, \dd \sigma^\m_\varphi(\chi) = \int_X (g*\varphi)(x)\overline{\varphi(x)} \, \dd \m(x) 
\,,
\]
where $(g*\varphi)\overline \varphi \in \Cz(X)$.
\item $\overline \pi_\m : \Borel(\widehat G) \to \bdd(L^2(X,\m))$ denotes the \textbf{canonical Borel extension} of $\pi_\m$ to the algebra of bounded Borel functions on $\widehat G$ determined by  
\[
\< \overline\pi(f)h,h \> = \int_{\widehat{G}} f(\chi) \, \dd \sigma^\m_h(\chi) \qquad \forall  f \in \Borel(\widehat G), h \in \H \,.
\]
\item $P_\m$ denotes the  associated \textbf{projection-valued measure} defined by  
\[
P_\m(B)\coloneqq \overline\pi_\m(\1_B)
\]
for Borel subsets $B \subset \widehat G$.
\item We say $\m$ is \textbf{spectrally supported} in a Borel set $B \subset \widehat G$ if $\sigma^\m_h$ is supported in $B$ for all $h \in L^2(X,\m)$. See Remark~\ref{supprmk}  for equivalent formulations.
\end{enumerate}
\end{defn}

Let us recall the following standard definition:
\begin{defn}
Recall that a nonzero element $h \in L^2(X,\m)$ is called an \textbf{eigenfunction} with \textbf{eigenvalue} $\chi \in \widehat{G}$ if it satisfies
\begin{align*}
U(t) h = \chi(t)h  && \text{ for all } t \in G.
\end{align*}
We denote by $\SSS$ the set of eigenvalues. Note that when $\m$ is ergodic, $\SSS$ is a group.

For each $\chi \in \SSS$ we denote by 
\[
E_\chi:= \{ f \in L^2(X, \m) : f \mbox{ is eigenfunction for } \chi \} \cup \{ 0\} \,,
\]
which is the \textbf{eigenspace} for $\chi$.
\end{defn}

We define 
\begin{align*}
\left(L^2(X, \m)\right)_{\mathsf{pp}}&:= \{ f \in L^2(X, \m) : \sigma_{f} \mbox{ is pure point } \} \\
\left(L^2(X, \m)\right)_{\mathsf{c}}&:= \{ f \in L^2(X, \m) : \sigma_{f} \mbox{ is continuous } \} \\
\left(L^2(X, \m)\right)_{\mathsf{ac}}&:= \{ f \in L^2(X, \m) : \sigma_{f} \mbox{ is absolutely continuous } \} \\
\left(L^2(X, \m)\right)_{\mathsf{sc}}&:= \{ f \in L^2(X, \m) : \sigma_{f} \mbox{ is singular continuous } \} \,.
\end{align*}

The following is well known, see \cite[Theorem 8.49]{ChevRay}.

\begin{fact*} We have 
\begin{align*}
L^2(X, \m)&= \left(L^2(X, \m)\right)_{\mathsf{pp}} \bigoplus \left(L^2(X, \m)\right)_{\mathsf{c}}\\
   \left(L^2(X, \m)\right)_{\mathsf{c}}&= \left(L^2(X, \m)\right)_{\mathsf{ac}}\bigoplus \left(L^2(X, \m)\right)_{\mathsf{sc}} \\
   \left(L^2(X, \m)\right)_{\mathsf{pp}}&= \bigoplus_{\chi \in \SSS} E_\chi \,.
\end{align*}
\end{fact*}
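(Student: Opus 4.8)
The plan is to deduce the three displayed decompositions from the Lebesgue decomposition of finite measures on $\widehat G$, transported to $\H := L^2(X,\m)$ through the projection-valued measure $P = P_\m$ and the spectral measures $\sigma_f = \sigma^\m_f$. First I would record the polarized (cross) spectral measures: for $f,g \in \H$ let $\sigma_{f,g}$ be the complex measure on $\widehat G$ with $\int_{\widehat G} \phi \, \dd \sigma_{f,g} = \< \overline\pi_\m(\phi) f, g\>$ for all $\phi \in \Borel(\widehat G)$, so that $\sigma_{f,f} = \sigma_f$ and $\sigma_{f,g}(B) = \< P(B) f, g\>$. From $\sigma_{f,g}(B) = \< P(B) f, P(B) g\>$ and Cauchy--Schwarz one gets the pointwise bound $|\sigma_{f,g}(B)|^2 \le \sigma_f(B)\,\sigma_g(B)$, hence $|\sigma_{f,g}| \le \tfrac12(\sigma_f + \sigma_g)$ as measures. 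This yields what I will call the \emph{Key Lemma}: if $\sigma_f$ and $\sigma_g$ are mutually singular then $\sigma_{f,g} \equiv 0$, so in particular $\< f, g\> = \sigma_{f,g}(\widehat G) = 0$ and $f \perp g$, and moreover $\sigma_{f+g} = \sigma_f + \sigma_g$.

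Next I would check that each of the four sets in the statement is a closed linear subspace. Linearity uses the domination $|\sigma_{f,g}| \le \tfrac12(\sigma_f + \sigma_g)$: the pure point, continuous, absolutely continuous, and singular continuous measures each form a band in the space of finite measures on $\widehat G$ (closed under domination and under total-variation limits), so if $\sigma_f$ and $\sigma_g$ lie in one of these bands then the cross term $\sigma_{f,g}$ does too, whence $\sigma_{f+g} = \sigma_f + \sigma_g + 2\,\mathrm{Re}\,\sigma_{f,g}$ again lies in that band; scalar multiples are immediate from $\sigma_{cf} = |c|^2 \sigma_f$. Closedness uses $|\sigma_{f_n}(B) - \sigma_f(B)| \le \|f_n - f\|(\|f_n\| + \|f\|)$, which follows from $\sigma_f(B) = \|P(B)f\|^2$ and shows $\sigma_{f_n} \to \sigma_f$ in total variation whenever $f_n \to f$ in $\H$; since each band is total-variation closed, the defining spectral type is preserved in the limit.

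The decompositions then fall out of applying $P$ to Lebesgue decompositions. For the first line, given $f \in \H$ let $A$ be the (countable) set of atoms of $\sigma_f$ and set $f_{\mathsf{pp}} := P(A) f$ and $f_{\mathsf{c}} := P(A^c) f$; since $\pi_\m$ is nondegenerate, $P(\widehat G) = \id_\H$ and $f = f_{\mathsf{pp}} + f_{\mathsf{c}}$, while the relation $\sigma_{P(B)f} = \sigma_f|_B$ gives $\sigma_{f_{\mathsf{pp}}} = \sigma_f|_A$ (pure point) and $\sigma_{f_{\mathsf{c}}} = \sigma_f|_{A^c}$ (continuous), the two pieces being orthogonal by the Key Lemma. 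The second line is identical, splitting the continuous measure $\sigma_f$ through the Lebesgue decomposition with respect to Haar measure $\lambda$ on $\widehat G$ and applying $P$ to a Borel set carrying the singular part. For the last line, one shows $f \in E_\chi \setminus \{0\}$ iff $\sigma_f = \|f\|^2 \delta_\chi$, by computing $\|U_\m(t) f - \chi(t) f\|^2$ and using that $\< U_\m(t) f, f\> = \int \psi(t)\,\dd\sigma_f(\psi)$ determines $\sigma_f$ via Bochner; a pure point $\sigma_f = \sum_\chi \sigma_f(\{\chi\})\delta_\chi$ then gives $f = \sum_\chi P(\{\chi\}) f$ with $P(\{\chi\}) f \in E_\chi$, and distinct eigenspaces are mutually orthogonal (as $\delta_\chi \perp \delta_{\chi'}$), yielding the orthogonal sum $\bigoplus_{\chi \in \SSS} E_\chi$.

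The main obstacle is the Key Lemma together with its underlying cross-measure estimate $|\sigma_{f,g}(B)|^2 \le \sigma_f(B)\sigma_g(B)$: once this is in hand, everything else is bookkeeping with bands of measures and the projection identities $\sigma_f(B) = \|P(B)f\|^2$ and $\sigma_{P(B)f} = \sigma_f|_B$. The one genuinely representation-theoretic point, rather than pure measure theory, is the identification of eigenfunctions with point-mass spectral measures in the final line, together with the observation that although $\SSS$ may be uncountable, each individual $f$ has spectral measure supported on countably many atoms, so the sum $f = \sum_\chi P(\{\chi\}) f$ is a genuine (countable) orthogonal series.
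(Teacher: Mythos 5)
Your proposal is correct. Note that the paper does not actually prove this statement: it is labelled as a Fact and dispatched with a citation to \cite[Theorem 8.49]{ChevRay}, so there is no in-paper argument to compare against; what you have written is essentially the standard textbook proof that the citation stands in for. Your three ingredients --- the polarized spectral measures $\sigma_{f,g}$ with the Cauchy--Schwarz bound $|\sigma_{f,g}(B)|^2\le\sigma_f(B)\,\sigma_g(B)$, the observation that the pure point, continuous, absolutely continuous and singular(-continuous) measures form bands, and the identities $\sigma_f(B)=\|P(B)f\|^2$ and $\sigma_{P(B)f}=\sigma_f|_B$ (the latter already recorded in the paper's lemma preceding Remark~\ref{supprmk}) --- are exactly what is needed, and the reduction of each displayed decomposition to a Lebesgue-type decomposition of $\sigma_f$ followed by an application of $P$ to a carrying Borel set is sound. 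Two small points worth making explicit in a full write-up: first, the Key Lemma genuinely requires the multiplicative bound evaluated on $B\cap S$ and $B\cap S^{c}$ for a Borel set $S$ separating $\sigma_f$ from $\sigma_g$ (the additive domination $|\sigma_{f,g}|\le\tfrac12(\sigma_f+\sigma_g)$ alone does not force $\sigma_{f,g}=0$), which is what your phrasing intends but compresses; second, your closing remark that $\SSS$ ``may be uncountable'' is harmless but unnecessary here, since the paper observes that separability of $L^2(X,\m)$ forces $\SSS$ to be countable --- though, as you say, your argument never needs this because each individual $\sigma_f$ has only countably many atoms.
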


Since $X$ is second-countable, $C(X)$ is separable, and hence so is $L^2(X, \m)$. Therefore, $\SSS$ is countable. In particular, there is always a countable orthonormal basis of eigenfunctions for $L^2(X,m)_{\mathsf{pp}}$.

\medskip

Note that the constant function $1$ always belongs to $E_0$ and hence $E_0$ is non-trivial. In particular, $\left(L^2(X, \m)\right)_{\mathsf{pp}}$ is always non-trivial. With this is mind, we can introduce the following definition. Note   we have used additive notation here, so the trivial character is denoted $0$.

\begin{defn}\label{specdecomp} We say that $(X,G, \m)$ has 
\begin{itemize}
    \item[(a)] \textbf{pure point spectrum} if $\left(L^2(X, \m)\right)=\left(L^2(X, \m)\right)_{\mathsf{pp}}$;
    \item[(b)] \textbf{continuous spectrum} if $\left(L^2(X, \m)\right)= E_0 \bigoplus \left(L^2(X, \m)\right)_{\mathsf{c}}$;
        \item[(c)] \textbf{absolutely continuous spectrum} if $\left(L^2(X, \m)\right)= E_0 \bigoplus \left(L^2(X, \m)\right)_{\mathsf{ac}}$;
            \item[(d)] \textbf{singular continuous spectrum} if $\left(L^2(X, \m)\right)= E_0 \oplus \left(L^2(X, \m)\right)_{\mathsf{sc}}$.
\end{itemize}
\end{defn}

\begin{rmk}\text{ }
\begin{enumerate}[(a)]
    \item  $(X,G, \m)$ has pure point spectrum if and only if for all $h \in L^2(X, \m)$ the measure $\sigma_h$ is pure point, if and only if the subspace spanned by the eigenfunctions is dense in $L^2(X, \m)$. Equivalently, $\m$ has pure point spectrum if there is a countable (hence  Borel) set   $C \subset \widehat G$ such that $\m$ is spectrally supported in $C$.
    \item $(X,G, \m)$ has (absolutely/singular) continuous spectrum if and only if for all $h \in L^2(X,\m)$ the measure $\sigma_{h}-\sigma_{h}(\{0\}) \delta_0$ is (absolutely/singular) continuous.
    \item If $\m$ is ergodic, then $\1_X$ is a basis for $E_0$. This implies that 
    \[
    \sigma_{h}(\{ 0 \})= \| P_0(h) \|^2 = \left| \int_{X} h(t) \dd t \right|^2 \,, 
    \]
    where $P_0$ is the orthogonal projection on $E_0$. 

    It is easy to see that in this case  $(X,G, \m)$ has (absolutely/singular) continuous spectrum if and only if for all $h \in L^2(X,\m)$ with $\int_{X} f(t) \dd t=0$, the measure $\sigma_{h}$ is (absolutely/singular) continuous.
\end{enumerate}
\end{rmk}

\section{Main results}\label{sect:main}

In this section, we prove the results described in the introduction. As usual, $X$   is a second-countable locally compact  space,  $G$ is a second-countable locally compact abelian group, and $\alpha$ is a continuous action of $G$ on $X$.

\begin{lemma}\label{specint}
Suppose that $\m \in \MeasG(X)$,  $\rho \in \Prob(\MeasG(X))$ are such that 
\[
\int_X f(y) \, \dd \m(y) = \int_{\MeasG(X)} \int_X f(x) \, \dd \omega(x)  \, \dd \rho(\omega) \qquad \forall f \in \Cz(X) \,.
\]
Fix $\varphi \in \Cz(X)$. 

For $f \in \Borel(\widehat G)$, define an affine function $\Psi_f : \MeasG(X) \to \C$ by 
\[
\Psi_f (\omega) \coloneqq \int_{\widehat G} f(\chi) \, \dd \sigma^\omega_\varphi(\chi) \,.
\]

Then the following statements hold:
\begin{enumerate}[(a)]
\item For all $f \in \Cz(\widehat G)$, we have $\Psi_f \in C(\MeasG(X))$,
\item $\omega \mapsto \sigma^\omega_\varphi: \MeasG(X) \to \Meas(\widehat G)$  is continuous for the weak-star topologies, 
\item for all $f \in \Borel(\widehat G)$ we have $\Psi_f \in \Borel(\MeasG(X))$, and
\item 
\[
\int_{\widehat G} f(\chi) \, \dd \sigma^\m_\varphi(\chi)  = \int_{\MeasG(X)} \int_{\widehat G} f(\chi) \, \dd \sigma^\omega_\varphi(\chi)  \, \dd \rho(\omega) \qquad \forall f \in \Borel(\widehat G) \,.
\]
\end{enumerate}
\end{lemma}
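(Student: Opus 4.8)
The plan is to prove (a)--(d) in the stated order, using the convolution formula from Definition~\ref{spectraldef}(c) to transport spectral integrals back to integrals over $X$, and then deferring the extension from $\Cz(\widehat G)$ to $\Borel(\widehat G)$ to Propositions~\ref{measurability} and \ref{C(X)toB(X)}. As a preliminary reduction I would note that $\sigma^\omega_{c\varphi}=|c|^2\sigma^\omega_\varphi$, so replacing $\varphi$ by $c\varphi$ scales every term in (a)--(d) by $|c|^2$; hence I may assume $\|\varphi\|_\infty\le 1$, which gives $\|\sigma^\omega_\varphi\|=\|\varphi\|_\omega^2\le 1$ and ensures that the map $S:\omega\mapsto\sigma^\omega_\varphi$ genuinely sends $\MeasG(X)$ into $\Meas(\widehat G)$. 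The step I expect to require the most care is precisely the passage from $\Cz(\widehat G)$ to $\Borel(\widehat G)$ in (c) and (d).

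For (a) I would begin with Fourier transforms $f=\widehat g$, $g\in\Cc(G)$. Here Definition~\ref{spectraldef}(c) gives $\Psi_{\widehat g}(\omega)=\int_X (g*\varphi)\overline\varphi\,\dd\omega$, and since $(g*\varphi)\overline\varphi\in\Cz(X)$ this is weak-star continuous on $\MeasG(X)$. The essential point, which drives the entire argument, is that $g*\varphi$ is built from the action $\alpha$ alone and is therefore the \emph{same} function for every measure. A general $f\in\Cz(\widehat G)$ is a uniform limit of such $\widehat g$ (the set $\{\widehat g:g\in\Cc(G)\}$ is uniformly dense in $\Cz(\widehat G)$), and the estimate $\|\Psi_f-\Psi_{f'}\|_\infty\le\|f-f'\|_\infty$ (from $\|\sigma^\omega_\varphi\|\le 1$) exhibits $\Psi_f$ as a uniform limit of continuous functions, hence continuous. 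Claim (b) is then a restatement of (a): by definition of the weak-star topology on $\Meas(\widehat G)$, continuity of $S$ means exactly that $\omega\mapsto\int_{\widehat G}f\,\dd\sigma^\omega_\varphi=\Psi_f(\omega)$ is continuous for every $f\in\Cz(\widehat G)$.

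For (c) I would avoid a fresh monotone-class argument by factoring $\Psi_f=\Phi_f\circ S$, where $\Phi_f:\Meas(\widehat G)\to\C$ is the affine function of Proposition~\ref{measurability}. That proposition gives $\Phi_f\in\Borel(\Meas(\widehat G))$ for every $f\in\Borel(\widehat G)$, while (b) shows $S$ is continuous, and a composition of a Borel function with a continuous one is Borel. For (d) I would first establish the identity for $f=\widehat g$ by feeding the fixed $\Cz(X)$ function $(g*\varphi)\overline\varphi$ into the hypothesis that $\rho$ represents $\m$:
\[
\int_{\widehat G}\widehat g\,\dd\sigma^\m_\varphi=\int_X(g*\varphi)\overline\varphi\,\dd\m=\int_{\MeasG(X)}\int_X(g*\varphi)\overline\varphi\,\dd\omega\,\dd\rho(\omega)=\int_{\MeasG(X)}\Psi_{\widehat g}(\omega)\,\dd\rho(\omega),
\]
and then extend to all $f\in\Cz(\widehat G)$ by density and the contraction estimate as in (a). To reach all of $\Borel(\widehat G)$ I would push $\rho$ forward along $S$, setting $\lambda:=S_*\rho\in\Prob(\Meas(\widehat G))$; the $\Cz$-case just proven says precisely that $\lambda$ represents $\sigma^\m_\varphi$ in the sense of Proposition~\ref{C(X)toB(X)} (applied with $\widehat G$ in place of $X$). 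That proposition then upgrades the identity to every $f\in\Borel(\widehat G)$, which unwinds, via the change of variables for the pushforward, to exactly the equation in (d).

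The crux, as flagged, is keeping both the measurability of the integrand and the integral identity under control while leaving $\Cz(\widehat G)$. The resolution is twofold: isolate the measure-independence of $g*\varphi$, so that a single continuous function on $X$ drives the representing property of $\rho$ for all $\omega$ simultaneously, and then offload the Borel bootstrapping onto the already-proven Propositions~\ref{measurability} and \ref{C(X)toB(X)} through the continuous map $S$ and its pushforward, rather than re-running a Dynkin-system argument by hand.
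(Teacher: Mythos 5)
Your proposal is correct and follows essentially the same route as the paper's proof: reduce to $f=\widehat g$ via the function $(g*\varphi)\overline\varphi\in\Cz(X)$, extend to $\Cz(\widehat G)$ by the contraction estimate, obtain (c) by composing the continuous map $\omega\mapsto\sigma^\omega_\varphi$ with the Borel functional of Proposition~\ref{measurability}, and obtain (d) by pushing $\rho$ forward along that map and invoking Proposition~\ref{C(X)toB(X)}. No substantive differences.
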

\begin{proof}
Without loss of generality, $\|\varphi\|_\infty=1$.
For all $\omega$ we have 
\[
\left| \Psi_f (\omega) \right|   \leq \| f \|_\infty \, \sigma_{\varphi}^\omega(\widehat{G}) \leq \|f \|_\infty \,.
\]
For (a), it suffices to verify $\Psi_f$ is continuous for a uniformly dense subset of $\Cz(\widehat G)$. 

Let $f = \widehat g$ for some $g \in \Cc(G)$. In this case, putting $F=(g*\varphi)\overline \varphi \in \Cz(X)$ where 
\[
(g*\varphi)(x) = \int_G g(t) \varphi(\alpha(-t,x)) \, \dd t \,,
\] 
by Definition~\ref{spectraldef} we have
\[ 
\Psi_f(\omega) = \int_{\widehat G} f(\chi) \, \dd \sigma^\omega_\varphi(\chi)=  \< \pi_\omega(f)  \varphi, \varphi \>_\omega = \int_X F(x)  \, \dd \omega(x) \, . 
\]
Since $F \in \Cz(X)$, the expression on the right is continuous as a function of $\omega$ by definition of the weak-star topology, proving (a). 

Clearly (a)  implies (b). 

For  (c), suppose that $f \in \Borel(\widehat G)$. Since 
\[
\Meas(X) \ni \omega \mapsto \sigma^\omega_\varphi \in \Meas(\widehat G)
\]
is continuous  and   
\[
\Meas(\widehat G)  \ni \sigma \mapsto  \int_{\widehat G} f(\chi) \, \dd  \sigma( \chi)  \in \C
\]
is Borel (see Lemma~\ref{measurability}), we have that $\Psi_f$ is Borel measurable as desired. 

Towards  (d), first note that if $f = \widehat g$ for $g \in \Cc(G)$ and $F \coloneqq (g*\varphi)\overline \varphi$ as before, then
\begin{align*} 
\int_{\widehat G} f(\chi) \, \dd \sigma^\m_\varphi(\chi) &= \int_X F(x) \, \dd \m(x) = \int_{\MeasG(X)} \int_X F(y) \, \dd \omega(y) \, \dd \rho(\omega) \\
&= \int_{\MeasG(X)} \int_{\widehat G} f(\chi) \, \dd \sigma^\omega_\varphi(\chi) \, \dd \rho(\omega)
\end{align*}
so (d) holds in this case. 

Because $f\mapsto \Psi_f$ is contractive, a simple continuity argument gives that (a) holds as well for all  $f \in \Cz(G)$. Equivalently,
\[ 
\int_{\widehat G} f(\chi) \, \dd \sigma^\m_\varphi(\chi) =   \int_{\Meas(\widehat G)} \int_{\widehat G} f(\chi) \, \dd \sigma(\chi) \, \dd \rho^\sharp(\omega)
\]
for all $f \in \Cz(\widehat G)$ where $\rho^\sharp \in \Prob(\Meas(\widehat G))$ denotes the pushforward of $\rho$ by $\omega \mapsto \sigma^\omega_\varphi$.  By Proposition~\ref{C(X)toB(X)}, it follows that  
\[
\int_{\widehat{G}} f(\chi) \, \dd \sigma^\m_\varphi(\chi) =   \int_{\Meas(\widehat G)} \int_{\widehat G} f(\chi) \, \dd \sigma(\chi) \, \dd \rho^\sharp(\sigma)
\]
for all $f \in \Borel(\widehat G)$ which amounts to (d) for all $f \in \Borel(\widehat G)$.
\end{proof}

As a consequence, we get the following theorem, which is the main result of the paper and leads to interesting consequences for mean almost periodic measures. 

\begin{thm}\label{thm:almosteverywhere}
Let $G$ be  a second-countable locally compact  abelian group acting continuously on a second-countable locally compact  space $X$. Suppose  $\m \in \ProbG(X)$ and  $\rho \in \Prob(\Erg(X))$ satisfy 
\[
\int_X f(x) \, \dd \m(x) = \int_{\Erg(X)} \int_X f(y) \, \dd \omega(y) \, \dd \rho(\omega) \qquad \forall f \in \Cz(X) \,.
\]
If $B \subset \widehat G$ is a Borel set such that $P_\m(B) = \id_{L^2(X,\m)}$, then  $P_\omega(B) = \id_{L^2(X,\omega)}$  for $\rho$-a.e. $\omega \in \Erg(X)$. 
\end{thm}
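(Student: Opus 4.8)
The plan is to test the spectral hypothesis against the single bounded Borel function $f = \1_{\widehat G \setminus B}$, obtaining an integral identity over $\Erg(X)$, and then to upgrade a ``for each $\varphi$, $\rho$-almost every $\omega$'' statement into a ``$\rho$-almost every $\omega$, for all $\varphi$'' statement by exploiting separability.

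First I would observe that, since $\Erg(X) \subset \MeasG(X)$, the measure $\rho$ may be regarded as an element of $\Prob(\MeasG(X))$ concentrated on $\Erg(X)$, so Lemma~\ref{specint} applies for any fixed $\varphi \in \Cz(X)$. Applying part (d) of that lemma to $f = \1_{\widehat G \setminus B}$ yields
\[
\sigma^\m_\varphi(\widehat G \setminus B) = \int_{\Erg(X)} \sigma^\omega_\varphi(\widehat G \setminus B) \, \dd \rho(\omega) \,.
\]
By Remark~\ref{supprmk}, the hypothesis $P_\m(B) = \id_{L^2(X,\m)}$ is equivalent to $\sigma^\m_\varphi$ being supported in $B$, so the left-hand side is zero. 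As the integrand $\omega \mapsto \sigma^\omega_\varphi(\widehat G \setminus B)$ is nonnegative, I conclude $\sigma^\omega_\varphi(\widehat G \setminus B) = 0$ for $\rho$-almost every $\omega$.

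The hard part is that this $\rho$-null exceptional set depends on the choice of $\varphi$, so I cannot immediately conclude $P_\omega(B) = \id$ for any fixed $\omega$. To remove this dependence I would use that $X$ is second-countable, whence $\Cz(X)$ is separable; fix a countable uniformly dense subset $\{\varphi_k\} \subset \Cz(X)$. For each $k$ there is a $\rho$-null set $N_k$ outside of which $\sigma^\omega_{\varphi_k}(\widehat G \setminus B) = 0$, and the union $N = \bigcup_k N_k$ remains $\rho$-null. Using the identity $\|P_\omega(\widehat G \setminus B)\varphi_k\|^2_\omega = \sigma^\omega_{\varphi_k}(\widehat G \setminus B)$ from the lemma preceding Remark~\ref{supprmk}, I get $P_\omega(\widehat G \setminus B)\varphi_k = 0$ for every $\omega \notin N$ and every $k$.

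Finally I would fix $\omega \notin N$ and pass from the dense set to the whole space. Since $\omega$ is a probability measure, $\|\cdot\|_\omega \leq \|\cdot\|_\infty$, so uniform density of $\{\varphi_k\}$ in $\Cz(X)$ together with the $L^2(\omega)$-density of $\Cz(X)$ makes $\{\varphi_k\}$ dense in $L^2(X,\omega)$; the bounded operator $P_\omega(\widehat G \setminus B)$ vanishes on this dense set and is therefore zero. As the representation is nondegenerate, $P_\omega(\widehat G \setminus B) = \id - P_\omega(B)$, giving $P_\omega(B) = \id_{L^2(X,\omega)}$ for all $\omega \notin N$, as required. I expect the only genuine obstacle to be this quantifier interchange, which is precisely what the separability of $\Cz(X)$ resolves; all remaining steps are direct invocations of Lemma~\ref{specint} and the spectral identities.
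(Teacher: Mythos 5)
Your proposal is correct and follows essentially the same route as the paper: apply Lemma~\ref{specint}(d) to the indicator of $\widehat G \setminus B$ for each fixed $\varphi$, use $\|P_\omega(\widehat G\setminus B)\varphi\|_\omega^2=\sigma^\omega_\varphi(\widehat G\setminus B)$ and positivity to get a $\varphi$-dependent null set, and then remove the dependence via a countable dense family in $\Cz(X)$ together with density in each $L^2(X,\omega)$. The paper phrases the quantifier interchange as a countable intersection of co-null sets rather than a countable union of null sets, but the argument is the same.
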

\begin{proof}
Let $N = X-D$ so that the statement to be proven is $P_\omega(N)=0$ for $\rho$-a.e. $\omega \in \Erg(X)$. 

Fix a sequence  $\varphi_n \in \Cz(X)$, $n=1,2,\ldots$ with  dense linear span. Then, for every $\omega \in \Meas(X)$, the sequence (of classes) $\varphi_n \in L^2(X,\omega)$ has dense linear span so that
\[ 
\{ \omega \in \Erg(X) : P_\omega(D)=0 \} = \textstyle{\bigcap_{n=1}^\infty} \{ \omega \in \Erg(X) : P_\omega(D)\varphi_n =0\} \,. 
\]
Since co-null sets are closed under countable intersections, we need only fix an arbitrary $\varphi \in \Cz(X)$ and prove that $P_\omega(N) \varphi =0$ for $\rho$-a.e. $\omega \in \Erg(X)$.  Recalling $\|P_\omega(N) \varphi\|_\omega^2 = \sigma^\omega_\varphi(N)$ and using Lemma~\ref{specint} to write
\[ \sigma^\m_\varphi(N) = \int_{\Erg(X)} \sigma^\omega_\varphi(N) \, \dd \rho(\omega), \]
we obtain our desired conclusion.
\end{proof}

\medskip 

Next, let us recall the following definition from \cite{LSS2}

\begin{defn} Let $(X,G)$ be a topological dynamical system and let $\cA$ be a F{\o}lner sequence. An element $x \in X$ is called a \textbf{Besicovitch almost periodic point} with respect to the F{\o}lner sequence $\cA$ if for all $f \in C(X)$ the function 
\[
G \ni t \to f_x(x):= f(\alpha(t,x))
\]
belongs to $\bap_{\cA}(G)$. For properties of Besicovitch almost periodic points, see \cite{LSS2}.

The set of Besicovitch almost periodic points in $X$ is denoted by 
\[
\bapx \,.
\]  
\end{defn}

\begin{rmk} Let $\mu \in \cM^\infty(G)$ and let $\cA$ be a van Hove sequence. Then, $\mu \in \Bap_{\cA}(G)$ if and only if $\mu$ is a Besicovitch almost periodic point in $\XX(\mu)$ with respect to $\cA$ \cite[Prop.~7.1]{LSS2}.
    
\end{rmk}

The following is an immediate application of the main theorem. 

\begin{thm}\label{thm:ppimpliesrgpp} Let $(X,G,\m)$ be any measurable dynamical system. Assume that  $L^2(X,\m)$ has pure point spectrum.  Then there exists some $Y \subseteq \Erg(X)$ with the following properties:
\begin{itemize}
    \item[(a)] $\rho(Y)=1$.
    \item[(b)] For all $\omega \in Y$, $L^2(X,\omega)$ has pure point spectrum.
    \item[(c)] If $\cA$ is any F\o lner sequence along which the Birkhoff ergodic theorem holds, then 
for all $\omega \in Y$ the set $\bapx$ is $\omega$-measurable and  
\[
\omega(\bapx) =1 \,.
\]
In particular, 
\[
\bapx \neq \emptyset \,.
\]
    \item[(d)] $\chi \in \widehat{G}$ is an eigenvalue for $\m$ if and only if there exists some $\omega \in Y$ such that $\chi$ is an eigenvalue for $\omega$. Moreover, in this case, the set of $\omega \in \Erg(X)$ such that $\chi$ is an eigenvalue for $\omega$ has positive measure.
 \end{itemize}
\end{thm}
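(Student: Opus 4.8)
The plan is to construct a single co-null set $Y$ serving all four parts simultaneously, built from the countable set $\SSS$ of eigenvalues of $\m$. I would first set $C := \SSS \subset \widehat{G}$; since $L^2(X,\m)$ is separable this set is countable, and since $\m$ has pure point spectrum each $\sigma^\m_h$ is a pure point measure whose atoms are eigenvalues and hence lie in $C$, so $\m$ is spectrally supported in $C$, i.e. $P_\m(C) = \id_{L^2(X,\m)}$. I would then apply Theorem~\ref{thm:almosteverywhere} with $B = C$ and define
\[
Y := \{ \omega \in \Erg(X) : P_\omega(C) = \id_{L^2(X,\omega)} \},
\]
which is co-null, giving (a). For every $\omega \in Y$ the measure $\omega$ is spectrally supported in the countable set $C$, which is precisely the condition that $L^2(X,\omega)$ have pure point spectrum, giving (b).

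For (c), I would fix $\omega \in Y$ and a F\o lner sequence $\cA$ along which the Birkhoff theorem holds. Since $\omega$ is ergodic with pure point spectrum, I would invoke the characterization of Besicovitch almost periodic points recalled in the introduction (from \cite{LSS2}) to conclude that $\omega$-almost every point of $X$ is Besicovitch almost periodic with respect to $\cA$, and that $\bapx$ is $\omega$-measurable; hence $\omega(\bapx) = 1$, and as $\omega$ is a probability measure, $\bapx \neq \emptyset$.

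For (d), the backward implication is forced by the construction of $Y$: if $\omega \in Y$ and $\chi$ is an eigenvalue of $\omega$, then $\sigma^\omega_h(\{\chi\}) > 0$ for some $h$, and since $\omega$ is supported in $C$ this gives $\chi \in C = \SSS$, an eigenvalue of $\m$. For the forward implication I would fix $\varphi_n \in \Cz(X)$ with dense linear span, use $\|P_\m(\{\chi\})\varphi_n\|^2 = \sigma^\m_{\varphi_n}(\{\chi\})$ to see that an eigenvalue $\chi$ of $\m$ satisfies $\sigma^\m_{\varphi_n}(\{\chi\}) > 0$ for some $n$, and apply Lemma~\ref{specint}(d) with $f = \1_{\{\chi\}}$:
\[
\sigma^\m_{\varphi_n}(\{\chi\}) = \int_{\Erg(X)} \sigma^\omega_{\varphi_n}(\{\chi\}) \, \dd \rho(\omega).
\]
This forces $\{\omega \in \Erg(X) : \sigma^\omega_{\varphi_n}(\{\chi\}) > 0\}$ to have positive $\rho$-measure; every such $\omega$ has $\chi$ as an eigenvalue, which already proves the ``moreover'' assertion, and intersecting with the co-null set $Y$ yields a positive-measure, hence nonempty, set of $\omega \in Y$ having $\chi$ as an eigenvalue.

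I expect the main obstacle to be part (c): the passage from ``$\omega$ ergodic with pure point spectrum'' to ``$\omega$-a.e.\ point is Besicovitch almost periodic'', together with the $\omega$-measurability of $\bapx$, is genuine external input drawn from \cite{LSS2} rather than a consequence of the spectral machinery developed here. Within the self-contained portion, the one delicate point is that (d) must hold for \emph{every} $\omega \in Y$, not merely $\rho$-almost every $\omega$; this is exactly why $Y$ should be defined using the single countable set $C = \SSS$ of eigenvalues of $\m$, so that the backward implication holds pointwise on $Y$ rather than only almost surely.
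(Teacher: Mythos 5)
Your proposal is correct and follows essentially the same route as the paper: define $Y$ via the countable eigenvalue set $\SSS$ using Theorem~\ref{thm:almosteverywhere}, outsource (c) to \cite{LSS2}, and prove the forward direction of (d) by choosing a continuous $\varphi$ with $\sigma^\m_\varphi(\{\chi\})>0$ and applying Lemma~\ref{specint}(d) to $\1_{\{\chi\}}$. Your closing observation that $Y$ must be built from the single set $C=\SSS$ so that the backward implication in (d) holds pointwise on $Y$ is exactly the (implicit) structure of the paper's argument.
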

\begin{rmk} If $\bapx$ is Borel measurable, then 
\[
\m(\bapx) = \int_{\Erg(X)} \omega(\bapx) \dd \rho(\omega) =1
\]
since $\omega(\bapx) =1$ for $\rho$-almost all $\omega$.
\end{rmk}
\begin{proof}
Let $\SSS \subset \widehat G$ be the spectrum of $L^2(X,\m)$. Then, $\SSS$ is countable and every $\chi \in \SSS$ is an eigenvalue for $L^2(X,\m)$.

We have that $P_\SSS = \id_{L^2(X,\m)}$ and it follows from Theorem \ref{thm:almosteverywhere} that there exists some set $Y \subseteq \Erg(X)$ such that $\rho(Y)=1$ and $P_\SSS=\id_{L^2(X,\omega)}$ for all $\omega \in Y_0$. In particular, all $\omega \in Y$ have countable spectrum contained in $\SSS$. (a) and (b) and the implication $\Longleftarrow$ in (d) follow. 

(c) follows from \cite[Thm.~4.7]{LSS2}.

We need to prove the implication $\Longrightarrow$ in (d). Pick an arbitrary $\chi \in \SSS$. Since $C(X)$ is dense in $L^2(X,\m)$ and $P_\m({\{\chi\}}) \neq 0$, there exists some $\phi \in C(X)$ with $\| \phi \|_{2, \m}=1$ such that
\[
P_\m({\{ \chi \}}) \varphi \neq 0 \,.
\]
Then,
\begin{align*}
0 &\neq \| P_\m({\{ \chi \}})\varphi \|^2_2= \sigma^\m_{ P_\m(\{ \chi \}) \varphi }(\widehat{G}) = \sigma^\m_{\varphi}|_{\{ \chi \}}(\widehat{G})= \sigma^\m_{\varphi}(\{ \chi \}) \,.
\end{align*}
Applying Lemma~\ref{specint} (d) with $f= 1_{\chi}$, the characteristic function of the singleton $\chi$, we get 
\[
0 \neq \sigma^\m_{\varphi}(\{ \chi \})= \int_{\MeasG(X)} \sigma^\omega_{\varphi}(\{ \chi \}) \, \dd \rho(\omega) \,.
\]
This implies that the set of $\omega \in \Erg(X)$ such that 
\[
\sigma^\omega_{\varphi}(\{ \chi \}) \neq 0 \,,
\]
has positive measure in $\Erg(X)$. Since $\rho(Y)=1$, this set intersects $Y$. In particular, there exists some $\omega \in Y$ such that
\[
\sigma^\omega_{\varphi}(\{ \chi \}) \neq 0 \,.
\]
Then, $\chi$ is an eigenvalue for $L^2(X,\omega)$.
\end{proof}

\begin{rmk} Under the assumptions of Theorem~\ref{thm:ppimpliesrgpp}, the spectrum of $\m$ is just the union of spectrum of $\omega \in Y$.
\end{rmk}

Let us next discuss the eigenvalues of $\m$. The following result gives a simple characterization of the eigenvalues and the bounded eigenfunctions.

\begin{thm}
Let $G$ be  a second-countable locally compact  abelian group acting continuously on a second-countable locally compact  space $X$. Suppose  $\m \in \ProbG(X)$ and  $\rho \in \Prob(\Erg(X))$ satisfy 
\[
\int_X f(x) \, \dd \m(x) = \int_{\Erg(X)} \int_X f(y) \, \dd \omega(y) \, \dd \rho(\omega) \qquad \forall f \in \Cz(X) \,.
\]
Let $f \in B(X)$ and $\chi \in \widehat{G}$. Then $f$ is an eigenfunction with eigenvalue $\chi$ if and only if there exists some set $Y \subseteq \Erg(X)$ with $\rho(Y)=1$ such that $f$ is an eigenfunction with eigenvalue $\chi$ in $L^2(X, \omega)$ for all $\omega \in Y$.
\end{thm}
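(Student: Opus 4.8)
The plan is to reduce both implications to a single application of Proposition~\ref{C(X)toB(X)} for a well-chosen nonnegative Borel function. For each fixed $t \in G$ define
\[
h_t(x) := \bigl| f(\alpha(-t,x)) - \chi(t) f(x) \bigr|^2 .
\]
Since $f \in \Borel(X)$ and both $\alpha$ and $\chi$ are continuous, each $h_t$ lies in $\Borel(X)$ with $\|h_t\|_\infty \le 4\|f\|_\infty^2$. The point is that for any $\nu \in \ProbG(X)$,
\[
\int_X h_t \, \dd\nu = \bigl\| U_\nu(t) f - \chi(t) f \bigr\|_{L^2(X,\nu)}^2 ,
\]
so $f$ satisfies $U_\nu(t)f = \chi(t)f$ for every $t$ exactly when $\int_X h_t \, \dd\nu = 0$ for every $t$. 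Viewing $\rho$ as a probability measure on $\Meas(X)$ supported in $\Erg(X)$, Proposition~\ref{C(X)toB(X)} applied to each $h_t \in \Borel(X)$ yields
\[
\int_X h_t \, \dd\m = \int_{\Erg(X)} \int_X h_t \, \dd\omega \, \dd\rho(\omega), \qquad t \in G .
\]

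The reverse implication I would dispatch directly. If $Y$ is co-null and $f$ is an eigenfunction with eigenvalue $\chi$ in $L^2(X,\omega)$ for all $\omega \in Y$, then $\int_X h_t \, \dd\omega = 0$ for every $\omega \in Y$ and every $t$, so the displayed identity forces $\int_X h_t \, \dd\m = 0$ for all $t$, i.e.\ $U_\m(t)f = \chi(t)f$ for all $t$. To see $f \neq 0$ in $L^2(X,\m)$, apply the same identity (via Proposition~\ref{C(X)toB(X)}) to $|f|^2$: since $\int_X |f|^2 \, \dd\omega > 0$ for every $\omega \in Y$ and $\rho(Y) = 1$, we obtain $\int_X |f|^2 \, \dd\m > 0$. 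Hence $f$ is an eigenfunction for $\m$.

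For the forward implication, suppose $f$ is an eigenfunction for $\m$. For each fixed $t$ we have $\int_X h_t \, \dd\m = 0$; as the integrand $\omega \mapsto \int_X h_t \, \dd\omega$ is nonnegative, the identity forces it to vanish $\rho$-almost everywhere, on a co-null set $Y_t$. The nuisance is that $Y_t$ depends on $t$ while $G$ is uncountable, so one cannot intersect over all $t$ at once. I would resolve this using second-countability: fix a countable dense set $D \subseteq G$ and set $Y_0 = \bigcap_{t \in D} Y_t$, still co-null. For each $\omega \in Y_0$ the equation $U_\omega(t)f = \chi(t)f$ holds for all $t \in D$; since $U_\omega$ is strongly continuous and $\chi$ is continuous, the map $t \mapsto \|U_\omega(t) f - \chi(t)f\|_\omega$ is continuous, so the equation propagates from the dense set $D$ to all of $G$.

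I expect the genuine obstacle to be the literal clause ``$f$ is an eigenfunction in $L^2(X,\omega)$'', which by definition requires $f \neq 0$ in $L^2(X,\omega)$. The argument above only transfers the eigenfunction \emph{equation}; the fixed Borel function $f$ may vanish in $L^2(X,\omega)$ on a set of $\omega$ of positive $\rho$-measure (for instance when $f$ is supported off some ergodic components), so nonzero-ness need not hold on all of $Y_0$. The delicate step is therefore to pin down the role of the invariant set $\{f=0\}$ across the ergodic decomposition and to see precisely where genuine eigenfunction-ness, rather than mere satisfaction of the equation, can be guaranteed. This is the part of the statement whose exact reading most affects the proof, and where I would concentrate the most care.
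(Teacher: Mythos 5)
Your proposal is correct and takes essentially the same route as the paper: both apply Proposition~\ref{C(X)toB(X)} to the bounded Borel function $|U_tf-\chi(t)f|^2$, restrict to a countable dense set $D\subseteq G$, intersect the resulting co-null sets, and propagate the eigenvalue relation from $D$ to all of $G$ by strong continuity of $U_\omega$ and continuity of $\chi$. The one place you go beyond the paper is the nonvanishing issue you flag at the end, and your concern is well founded: the paper's proof ignores it entirely, and the forward implication is in fact false if ``eigenfunction in $L^2(X,\omega)$'' is read literally as requiring $f\neq 0$ there. For instance, take $X=X_1\sqcup X_2$ a disjoint union of two invariant pieces, $\m=\tfrac12(\m_1+\m_2)$ with $\m_i$ ergodic and supported on $X_i$, and $f=\1_{X_1}$: then $f$ is a nonzero invariant function, hence an eigenfunction for the trivial character in $L^2(X,\m)$, yet $f=0$ in $L^2(X,\m_2)$ and $\rho(\{\m_2\})=\tfrac12$, so no co-null $Y$ can work. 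What both arguments actually establish in the forward direction is that the relation $U_\omega(t)f=\chi(t)f$ holds in $L^2(X,\omega)$ for $\rho$-a.e.\ $\omega$; your $|f|^2$ trick (which correctly supplies the nonvanishing of $f$ in $L^2(X,\m)$ for the reverse direction, another detail the paper omits) only yields that $\{\omega:\|f\|_{L^2(X,\omega)}>0\}$ has positive, not full, $\rho$-measure. So your instinct about where the difficulty lies is exactly right, and the correct fix is to weaken the forward conclusion accordingly rather than to look for a cleverer argument.
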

\begin{proof}
Pick some dense countable set $D \subseteq G$.

Now, for each $t \in D$ by Proposition~\ref{C(X)toB(X)} we have 
\begin{equation}\label{eq:eigen}
\int_{X} \left| U_tf(x) - \chi(t) f(x) \right|^2 \dd \m(x)= \int_{\Erg(X)} \int_X \left| U_tf(y) - \chi(t) f(y) \right|^2 \, \dd \omega(y) \, \dd \rho(\omega) \,.
\end{equation}

$\Longleftarrow$: By assumption, for all $\omega \in Y$ we have  
\[
\int_X \left| U_tf(y) - \chi(t) f(y) \right|^2 \dd \omega(y) =0 \,.
\]
This implies that 
\[
\int_{X} \left| U_tf(x) - \chi(t) f(x) \right|^2 \dd \m(x)=0
\]
and hence
\[
U_t f = \chi(t) f \qquad \mbox{ in } L^2(X, \m)
\]
for all $t \in D$. Since $D$ is dense in $G$ and $U_t$ and multiplication by $\chi(t)$ are continuous, we get that $f$ is an eigenfunction with eigenvalue $\chi$.

$\Longrightarrow$ is similar. For each $t \in D$ we have 
\[
\int_X \left| U_tf(y) - \chi(t) f(y) \right|^2 \dd \omega(y) =0 \,.
\]
Then, \eqref{eq:eigen} implies that there exists a set $Y_t \subseteq \Erg(X)$ with $\rho(Y_y)=1$ such 
that for all $\omega \in Y_t$ we have  
\[
\int_X \left| U_tf(y) - \chi(t) f(y) \right|^2 \dd \omega(y) =0 \,.
\]
Then, the set 
\[
Y:= \cap_{t \in D} Y_t
\]
has full measure in $\Erg(X)$ and for all $\omega \in Y$ and $t \in D$ we have 
\[
\int_X \left| U_tf(y) - \chi(t) f(y) \right|^2 \dd \omega(y) =0 \,.
\]
Exactly as above, this gives that for all $\omega \in Y$ $f$ is an eigenfunction with eigenvalue $\chi$.
\end{proof}

Next, let us list two immediate consequences of Theorem~\ref{thm:ppimpliesrgpp}.
\begin{cor}
Let $(X, G,\m)$ be any measurable dynamical system. If $L^2(X,\m)$ 
has singular spectrum, then $\rho$-a.e. $\omega \in \Erg(X)$ has singular spectrum. \qed
\end{cor}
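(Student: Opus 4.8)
The plan is to mirror the pure point case of Theorem~\ref{thm:ppimpliesrgpp}: there, having pure point spectrum was encoded as being spectrally supported in a single countable (hence Borel) set, and Theorem~\ref{thm:almosteverywhere} was applied directly. Here ``singular spectrum'' means that every spectral measure $\sigma^\m_h$ is singular with respect to the Haar measure $\lambda$ on $\widehat G$, i.e.\ $\left(L^2(X,\m)\right)_{\mathsf{ac}} = \{0\}$ in the notation of the Fact$^*$. The key idea is that singularity, too, can be encoded as being spectrally supported in a single Borel set, namely a $\lambda$-null one, once one passes to a maximal spectral type.

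First I would construct a maximal spectral type for $\m$. Since $X$ is second-countable, $L^2(X,\m)$ is separable, so I may fix an orthonormal basis $(e_n)$ and set $\sigma_{\max} := \sum_n 2^{-n}\, \sigma^\m_{e_n}$, a finite positive measure on $\widehat G$ (each $\sigma^\m_{e_n}$ has total mass $\|e_n\|_\m^2 = 1$). This $\sigma_{\max}$ dominates every $\sigma^\m_h$: if $\sigma_{\max}(B) = 0$ for a Borel set $B \subset \widehat G$, then $\|P_\m(B)e_n\|^2 = \sigma^\m_{e_n}(B) = 0$ for every $n$, so $P_\m(B)$ annihilates a total set and hence vanishes on all of $L^2(X,\m)$, whence $\sigma^\m_h(B) = \|P_\m(B)h\|^2 = 0$ for every $h$. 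Thus $\sigma^\m_h \ll \sigma_{\max}$ for all $h$.

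Now suppose $\m$ has singular spectrum, so each $\sigma^\m_{e_n}$ is singular with respect to $\lambda$. Then $\sigma_{\max}$, being a countable sum of measures singular with respect to $\lambda$, is itself singular with respect to $\lambda$, so there is a Borel set $N \subset \widehat G$ with $\lambda(N) = 0$ on which $\sigma_{\max}$ is supported. Since $\sigma^\m_h \ll \sigma_{\max}$, every $\sigma^\m_h$ is supported in $N$ as well, i.e.\ $\sigma^\m_h(N) = \|h\|_\m^2$ for all $h$; by Remark~\ref{supprmk} this says exactly $P_\m(N) = \id_{L^2(X,\m)}$. Applying Theorem~\ref{thm:almosteverywhere} with $B = N$ yields $P_\omega(N) = \id_{L^2(X,\omega)}$ for $\rho$-a.e.\ $\omega \in \Erg(X)$. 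For each such $\omega$, every $\sigma^\omega_h$ is supported in $N$, and since $\lambda(N) = 0$ this forces every $\sigma^\omega_h$ to be singular with respect to $\lambda$; that is, $\omega$ has singular spectrum.

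The only genuinely nontrivial step is the construction of the maximal spectral type and the observation that, once one has it, singularity of the entire spectrum collapses to being supported on a single $\lambda$-null Borel set. This is what makes Theorem~\ref{thm:almosteverywhere} applicable verbatim, exactly as the countable set $\SSS$ did in the pure point case; the remaining steps are formal manipulations with the dictionary of Remark~\ref{supprmk} relating spectral measures to spectral projections.
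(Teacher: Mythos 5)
Your argument is correct and is exactly the route the paper intends: the corollary is left as an immediate consequence of Theorem~\ref{thm:almosteverywhere}, and the only content to supply is that singularity of the whole spectrum can be encoded as $P_\m(N)=\id$ for a single Borel $\lambda$-null set $N$, which you do via the standard maximal spectral type $\sigma_{\max}=\sum_n 2^{-n}\sigma^\m_{e_n}$. The details (domination of every $\sigma^\m_h$ by $\sigma_{\max}$, closure of singular measures under countable sums, and the dictionary of Remark~\ref{supprmk}) are all handled correctly.
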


\begin{cor}
Let $(X, G,\m)$ be any measurable dynamical system. If the spectrum of $L^2(X,\m)$ has Hausdorff dimension at most $\alpha \geq0$ then, for 
$\rho$-a.e. $\omega \in \Erg(X)$ the spectrum of $L^2(X,\omega)$ has Hausdorff dimension at most $\alpha >0$. \qed
\end{cor}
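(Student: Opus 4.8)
The plan is to deduce this directly from Theorem~\ref{thm:almosteverywhere}, using nothing beyond the monotonicity of Hausdorff dimension under inclusion. First I would make the hypothesis precise, in parallel with the phrasing of the main theorem and of the pure-point corollary: to say that the spectrum of $L^2(X,\m)$ has Hausdorff dimension at most $\alpha$ means that there is a Borel set $B \subseteq \widehat G$ with $\dim_H(B) \le \alpha$ in which $\m$ is spectrally supported, i.e. $P_\m(B) = \id_{L^2(X,\m)}$. One may, for instance, take $B$ to be the support of the maximal spectral type of $U_\m$: this is a closed (hence Borel) set, it carries every spectral measure $\sigma^\m_h$, so $P_\m(B) = \id_{L^2(X,\m)}$, and by assumption $\dim_H(B) \le \alpha$.

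Given such a $B$, the rest is immediate. Since $P_\m(B) = \id_{L^2(X,\m)}$, Theorem~\ref{thm:almosteverywhere} furnishes a $\rho$-co-null set $Y \subseteq \Erg(X)$ with $P_\omega(B) = \id_{L^2(X,\omega)}$ for every $\omega \in Y$. For each such $\omega$, every spectral measure $\sigma^\omega_h$ is supported in $B$, so the spectrum of $L^2(X,\omega)$ is contained in $B$; since the spectrum of $\omega$ is a subset of $B$ and $\dim_H$ is monotone under inclusion, it has Hausdorff dimension at most $\dim_H(B) \le \alpha$. As this holds for all $\omega$ in the co-null set $Y$, we obtain the assertion that $\rho$-a.e. $\omega \in \Erg(X)$ has spectrum of Hausdorff dimension at most $\alpha$.

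I do not expect any genuine obstacle: the entire content sits in Theorem~\ref{thm:almosteverywhere}, and the one point deserving care is simply that the theorem must be fed a single Borel set rather than a numerical dimension bound. This is precisely why I would fix $B$ at the outset as the closed support of the maximal spectral type, so that a single application of the theorem transfers the inclusion, and hence the dimension bound, verbatim to $\rho$-almost every $\omega$; no countable exhaustion or limiting argument in $\alpha$ is needed. (The only cosmetic issue is the evident typo in the statement, where the conclusion reads ``at most $\alpha>0$'' while the hypothesis reads ``at most $\alpha\ge 0$''; the argument gives the bound $\alpha$ in both places.)
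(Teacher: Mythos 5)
Your proof is correct and is exactly the argument the paper intends: the corollary is stated with no proof precisely because it is a one-line application of Theorem~\ref{thm:almosteverywhere} to a single Borel set $B\subseteq\widehat G$ of Hausdorff dimension at most $\alpha$ carrying the spectrum of $\m$, followed by monotonicity of $\dim_H$ under inclusion. Your observation that one fixed $B$ suffices (no exhaustion in $\alpha$) and your note about the typo ``$\alpha>0$'' versus ``$\alpha\ge 0$'' are both on point.
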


We complete the section by looking at an interesting example.

\begin{example}
Let $\alpha$ be the homeomorphism of the two-dimensional torus $\T^2=\R^2/\Z^2$ defined, in additive notation, by the matrix $\mat{1 & 1 \\ 0 & 1}$. That is,  $\alpha(x,y) = (x+y,y)$. Let $\m \in \Prob(\T^2)$ be the  normalized Haar measure. For fixed $y \in \T$, let $\omega_y \in \Prob(\T^2)$ be the normalized Haar measure of $\T$ copied on the slice  $\T_y$. On the slice $\T_y$, $\alpha$ acts as rotation by the angle $y$. Thus, we may think of the dynamical system $(\T^2,\alpha)$ an aggregate of all the circle rotations, both rational and irrational. The ergodic probability measures for $\alpha$ are of two types:
\begin{itemize}
\item For irrational angles $y \in \T_\text{irrat}$, the rotation $x\mapsto x+y : \T \to \T$ is minimal with unique ergodic measure $\dd x$. Copying this measure on the circular slice $\T \times \{y\}$ yields an ergodic measure for $\alpha$. 
\item The remaining ergodic measures for $\alpha$ are normalized counting measures along finite orbits.   In more detail, for rational angles $y \in \T_\text{rat}$, the rotation $x \mapsto x+y: \T \to \T$ is $n$-periodic where   $n \in \Z^+$ is the denominator of $y$ expressed in lowest terms. The orbits are the fibers of the $n$-fold covering map $x \mapsto nx : \T \to \T$, so it is natural to parametrize them by $\T$.  
\end{itemize}
In summary, we have a   natural identification:
\[ \Erg(\T^2,\alpha) = \T_\text{irrat} \cup (\T_\text{rat} \times \T). \]
The usual Haar measure $\m = \dd x \, \dd y$ on $\T^2$ is  $\alpha$-invariant. 

It easy to show that the mapping $y \mapsto \dd x \times \delta_{y} : \T   \to \Prob(\T^2)$ is a homeomorphism  onto its image, but take care that  $\omega_\theta \notin \Erg(X)$ for $\theta$ rational. On the other hand, the homeomorphic image of $[0,1] \setminus \Q$ is a $G_\delta$
-set contained in $\Erg(X)$. Let $\rho$ be the probability measure on $\Erg(X)$ given as the pushforward through $\theta \mapsto \omega_\theta$ of the restriction to $[0,1]\setminus \Q$ of the usual arc length measure on $[0,1]$. It is easily to see that this is the desired measure $\rho$.

We get a corresponding unitary operator: 
\begin{align*} 
U : L^2(\T^2) \to L^2(\T^2) && (Uh)(x,y)=h(x+y,y).
\end{align*}
On the usual orthonormal basis $\{w^mz^n: m,n \in \Z\}$ where $w(x,y)=e^{2\pi ix}$, $z(x,y)=e^{2 \pi i y}$, the action of $U$ is given by
\[ U w^mz^n =  w^m z^{m+n}.  \]
In other words, under Fourier isomorphism $L^2(\T^2)\cong \ell^2(\Z^2)$, the unitary $U$ is conjugate to:
\begin{align*} 
V : \ell^2(\Z^2) \to \ell^2(\Z^2) && (Vh)(m,n)=h(m+n,n).
\end{align*}
That is, the unitary induced by the permutation of $\Z^2$ defined by the matrix $\mat{1 & 0 \\ 1 & 1}$. Thus,  $V$ may be rather straightforwardly understood as the direct sum over $m \in \Z$ of the $m$th powers of the bilateral shift $S : \ell^2(\Z) \to \ell^2(\Z)$. 

For the purposes of calculating the spectral measures of the $\Z$-action, however, it is somewhat more convenient to only take the Fourier transform in the $x$ variable. This is because,  under the resulting isomorphism $L^2(\T^2) \cong L^2(\Z \times \T)$, the unitary $U$ is conjugate to a multiplication operator
\begin{align*}
W :L^2(\Z \times \T) \to L^2(\Z \times \T) && (Wh)(m,y) = e^{2\pi i m y} h(m,y).
\end{align*}
Referring to Example~\ref{multopex}, the associated $*$-representation $\pi$ of $C(\T)$ on $L^2(\Z \times \T)$ is  also  an action by multiplication operators: 
\begin{align*} (\pi(f) h)(m,y) = f(my) h(m,y) && f\in C(\T), h \in L^2(\Z \times \T)\,.
\end{align*}
Given a $h \in L^2(\Z \times \T)$, the associated spectral  measure $\sigma_h^\m$ on $\T$ is the pushforward of  $|h(m,y)|^2 dy$ by the map 
\begin{align*}
q : \Z \times \T \to \T && q(m,y)=my.
\end{align*}
Define $\eta \in L^1_+(\T)$ and $c \geq 0$ by:
\begin{align*}
\eta(x) \coloneqq \sum_{m \neq 0} \frac{1}{|m|} \sum_{q(m,y) = x} |h(m,y)|^2 && c \coloneqq \int_\T |h(0,y)|^2 \, \dd y.
\end{align*}
Then, we have
\[ \sigma_h^\m = \eta(x) \dd x + c \delta_0 \]
where $\dd x$ is the Haar measure of $\T$ and $\delta_0$ is the Dirac measure at the identity.
\end{example}

\section{Dynamical and diffraction spectrum of translation bounded measures}\label{sec:Bap}

\subsection{Dynamical spectrum}

Let us recall the following definition of \cite{BL}.

\begin{defn} By a \textbf{dynamical system of translation bounded measures or (TMDS)} on short, we mean a pair $(\XX,G,\m)$ where $\XX$ is a $G$-invariant vaguely compact subset of $\cM^\infty(G)$.
\end{defn}
Whenever when $\mu \in \cM^\infty(G)$ the hull $\XX(\mu)$ gives rise to a TMDS.

Let us start with two immediate consequences of Theorem~\ref{thm:almosteverywhere}.

\begin{thm} Let $(\XX,G)$ be a TMDS and let $\m$ be any $G$-invariant probability measure on $\XX$. 

If the dynamical spectrum of $(\XX, \m)$ is supported inside the Borel set $B \subseteq \widehat{G}$ then so is the spectrum of $(\XX, \omega)$ for $\rho$-a.s. all $\omega \in \Erg(\XX)$.\qed 
\end{thm}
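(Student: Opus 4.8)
The plan is to recognize this as essentially an immediate specialization of Theorem~\ref{thm:almosteverywhere}, so the real work is just verifying that the TMDS setting fits its hypotheses and unwinding the phrase ``spectrum supported inside $B$'' into the projection-valued measure language used there. First I would observe that since $\XX$ is a vaguely compact $G$-invariant subset of $\cM^\infty(G)$ and $G$ is second-countable, $\XX$ sits inside some $\cM_{C,U}$ on which the vague topology is metrizable; hence $\XX$ is a compact metrizable (in particular second-countable locally compact Hausdorff) space, and the translation action of $G$ is continuous. Thus $(\XX,G)$ is precisely a topological dynamical system of the kind to which Theorem~\ref{thm:almosteverywhere} applies, with $\m \in \ProbG(\XX)$.

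Next I would translate the hypothesis. By Definition~\ref{spectraldef}(f) and Remark~\ref{supprmk}, saying the dynamical spectrum of $(\XX,\m)$ is supported inside $B$ means $\sigma^\m_h$ is supported in $B$ for all $h \in L^2(\XX,\m)$, which is exactly the statement $P_\m(B) = \id_{L^2(\XX,\m)}$. I would likewise record that the desired conclusion, that the spectrum of $(\XX,\omega)$ is supported in $B$, is the same as $P_\omega(B)=\id_{L^2(\XX,\omega)}$.

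Finally I would invoke Choquet theory (the corollary in Section~\ref{sec:choquet}) to produce the representing measure $\rho \in \Prob(\Erg(\XX))$ satisfying the barycentre identity
\[
\int_\XX f \, \dd\m = \int_{\Erg(\XX)} \int_\XX f \, \dd\omega \, \dd\rho(\omega) \qquad \forall f \in \Cz(\XX),
\]
and then apply Theorem~\ref{thm:almosteverywhere} directly with this $B$: it yields $P_\omega(B) = \id_{L^2(\XX,\omega)}$ for $\rho$-a.e.\ $\omega \in \Erg(\XX)$, which is the claim. I do not anticipate any genuine obstacle here, since all the substantive analysis has been done in the main theorem; the only points requiring a sentence of care are confirming the compact-metrizability of $\XX$ (so that the second-countability hypotheses are met) and matching the ``supported inside $B$'' terminology to the spectral-projection condition, neither of which is difficult given the earlier development.
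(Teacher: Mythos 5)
Your proposal is correct and matches the paper's intent exactly: the paper presents this result as an immediate consequence of Theorem~\ref{thm:almosteverywhere}, and your verification that $\XX$ is compact metrizable (hence second-countable locally compact), your translation of ``spectrum supported in $B$'' into $P_\m(B)=\id_{L^2(\XX,\m)}$ via Remark~\ref{supprmk}, and your invocation of the Choquet representation for $\rho$ are precisely the routine checks the paper leaves implicit. No gaps.
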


\begin{thm}Let $(\XX,G)$ be a TMDS and let $\m$ be any $G$-invariant probability measure on $\XX$. 

If the dynamical spectrum of $(\XX, \m)$ is singular, the for $\rho$-a.s. all $\omega \in \Erg(\XX(\mu))$ the spectrum of $(\XX, \omega)$ is singular. \qed 
\end{thm}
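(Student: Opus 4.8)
The plan is to reduce the statement to Theorem~\ref{thm:almosteverywhere} by rewriting singularity of the dynamical spectrum as spectral support inside a single Borel set of Haar measure zero. Throughout, let $\lambda$ denote the dual Haar measure on $\widehat G$. Recall that a positive measure on $\widehat G$ is singular with respect to $\lambda$ exactly when it is concentrated on some Borel set of $\lambda$-measure zero, and that $(\XX,\m)$ having singular spectrum means that every spectral measure $\sigma^\m_h$, for $h \in L^2(\XX,\m)$, is singular.

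First I would manufacture a single Haar-null Borel set carrying the entire spectral type. Since $\XX$ is second-countable, $L^2(\XX,\m)$ is separable, so I would fix a countable orthonormal basis $\{e_k\}$. By singularity, each $\sigma^\m_{e_k}$ is concentrated on some Borel set $N_k \subseteq \widehat G$ with $\lambda(N_k)=0$, and then $B := \bigcup_k N_k$ is again Borel with $\lambda(B)=0$, being a countable union of $\lambda$-null sets. Using $\|P_\m(\widehat G \setminus B) e_k\|^2 = \sigma^\m_{e_k}(\widehat G \setminus B) = 0$ for every $k$, the projection $P_\m(\widehat G \setminus B)$ vanishes on an orthonormal basis and is therefore zero; equivalently $P_\m(B) = \id_{L^2(\XX,\m)}$.

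Next I would apply Theorem~\ref{thm:almosteverywhere} to this Borel set $B$, which yields $P_\omega(B) = \id_{L^2(\XX,\omega)}$ for $\rho$-a.e.\ $\omega \in \Erg(\XX)$. For any such $\omega$ and any $h \in L^2(\XX,\omega)$, one then has $\sigma^\omega_h(\widehat G \setminus B) = \|P_\omega(\widehat G \setminus B)h\|^2 = 0$, so $\sigma^\omega_h$ is concentrated on $B$; since $\lambda(B)=0$, each such $\sigma^\omega_h$ is singular with respect to Haar measure. Hence $(\XX,\omega)$ has singular spectrum for $\rho$-a.e.\ $\omega$, as claimed.

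The only genuinely substantive step is the first one: converting the per-vector singularity hypothesis into a single Haar-null Borel set supporting the whole spectrum, which is precisely what makes the support-based Theorem~\ref{thm:almosteverywhere} applicable. This leans essentially on separability of $L^2(\XX,\m)$ (so that countably many null sets suffice) together with the elementary identity $\|P_\m(\,\cdot\,)h\|^2 = \sigma^\m_h(\,\cdot\,)$; everything else is a direct transcription of the main theorem, and indeed this argument simultaneously re-derives the earlier corollary on singular spectrum.
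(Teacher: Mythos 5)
Your proof is correct and is precisely the argument the paper leaves implicit: the theorem is stated as an immediate consequence of Theorem~\ref{thm:almosteverywhere}, and the only content to supply is exactly your first step, namely using separability of $L^2(\XX,\m)$ to assemble a single Haar-null Borel set $B$ with $P_\m(B)=\id$ from the per-vector singularity hypothesis. The remaining steps (applying the main theorem and reading off singularity of each $\sigma^\omega_h$ from $\|P_\omega(\widehat G\setminus B)h\|^2=\sigma^\omega_h(\widehat G\setminus B)=0$) match the intended route exactly.
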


Next, the following is an immediate consequence of Theorem~\ref{thm:ppimpliesrgpp} and \cite[Prop.~7.1]{LSS2}.

\begin{thm}\label{thm:bap} Let $(\XX,G)$ be a TMDS and let $\m$ be any $G$-invariant probability measure on $\XX$. 

Assume that $L^2(\XX, \m)$ has pure point spectrum. Let $\rho$ be the probability  measure on $\Erg(\XX)$  representing $\m$. Then there exists some $Y \subseteq \Erg(\XX$ with the following properties:
\begin{itemize}
    \item[(a)] $\rho(Y)=1$.
    \item[(b)] For all $\omega \in Y$, $L^2(\XX,\omega)$ has pure point spectrum.
    \item[(c)] If $\cA$ is any van Hove sequence along which the Birkhoff ergodic theorem holds, then 
for all $\omega \in Y$ the set $\XX \cap \Bap_{\cA}(G)$
\[
\omega(\XX \cap \Bap_{\cA}(G)) =1 \,.
\]
In particular, 
\[
\XX \cap \Bap_{\cA}(G) \neq \emptyset \,.
\]
    \item[(d)] $\chi \in \widehat{G}$ is an eigenvalue for $\m$ if and only if there exists some $\omega \in Y$ such that $\chi$ is an eigenvalue for $\omega$. Moreover, in this case, the set of $\omega \in \Erg(\XX)$ such that $\chi$ is an eigenvalue for $\omega$ has positive measure.
 \end{itemize}
\qed
\end{thm}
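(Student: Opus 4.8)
The plan is to deduce this directly from Theorem~\ref{thm:ppimpliesrgpp}, of which it is essentially the specialization $X = \XX$. First I would observe that a TMDS $(\XX,G)$ is precisely a topological dynamical system of the kind to which Theorem~\ref{thm:ppimpliesrgpp} applies: by definition $\XX$ is a $G$-invariant vaguely compact subset of $\cM^\infty(G)$, and since $G$ is second-countable such a set is compact and metrizable, hence in particular a second-countable locally compact Hausdorff space, with $G$ acting continuously by translation. Let $\rho \in \Prob(\Erg(\XX))$ be the Choquet representing measure of $\m$. Applying Theorem~\ref{thm:ppimpliesrgpp} with $X=\XX$ produces a set $Y \subseteq \Erg(\XX)$ satisfying conclusions (a), (b) and (d) verbatim, together with the statement that $\mathsf{Bap}_{\cA}(\XX)$ is $\omega$-measurable with $\omega(\mathsf{Bap}_{\cA}(\XX))=1$ for every F\o lner sequence $\cA$ along which the Birkhoff ergodic theorem holds.

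It then remains only to reconcile conclusion (c) in its present form with the one furnished by Theorem~\ref{thm:ppimpliesrgpp}. Since every van Hove sequence is in particular a F\o lner sequence, the conclusion of Theorem~\ref{thm:ppimpliesrgpp}(c) holds for any van Hove sequence $\cA$ along which Birkhoff holds; the remaining task is to identify the abstract set $\mathsf{Bap}_{\cA}(\XX)$ of Besicovitch almost periodic points of $\XX$ with the concrete set $\XX \cap \Bap_{\cA}(G)$. For a point $\nu \in \XX$ the defining condition---that $t \mapsto F(\alpha(t,\nu))$ lie in $\bap_{\cA}(G)$ for every $F \in C(\XX)$---is unchanged if $\XX$ is replaced by the sub-hull $\XX(\nu) \subseteq \XX$, because every $F \in C(\XX(\nu))$ extends to an element of $C(\XX)$ by the Tietze theorem and, conversely, every $F \in C(\XX)$ restricts to $C(\XX(\nu))$, so the two prescriptions generate the same family of orbit functions $t \mapsto F(\alpha(t,\nu))$. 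Hence $\nu$ is a Besicovitch almost periodic point of $\XX$ if and only if it is one of $\XX(\nu)$, and by the Remark preceding this theorem (\cite[Prop.~7.1]{LSS2}) the latter holds precisely when $\nu \in \Bap_{\cA}(G)$. This yields $\mathsf{Bap}_{\cA}(\XX) = \XX \cap \Bap_{\cA}(G)$, which transfers (c); the final non-emptiness assertion is then immediate from $\omega(\XX \cap \Bap_{\cA}(G))=1$ for $\omega \in Y$.

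The argument is thus essentially bookkeeping, since all of the analytic content already resides in Theorem~\ref{thm:ppimpliesrgpp} and in \cite[Prop.~7.1]{LSS2}. The one point demanding a moment's care---and the step I would flag as the main (if modest) obstacle---is the identification $\mathsf{Bap}_{\cA}(\XX) = \XX \cap \Bap_{\cA}(G)$: the cited proposition is phrased for the hull $\XX(\mu)$ of a single measure, whereas here the relevant points live in an arbitrary TMDS $\XX$ that need not coincide with the hull of any one of its members. Resolving this requires exactly the observation above, namely that the Besicovitch-almost-periodic-point property of $\nu$ is intrinsic to its orbit closure and so is insensitive to the choice of ambient system.
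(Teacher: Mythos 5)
Your proposal is correct and follows exactly the route the paper intends: the paper gives no written proof, simply declaring the theorem an immediate consequence of Theorem~\ref{thm:ppimpliesrgpp} applied to $X=\XX$ together with \cite[Prop.~7.1]{LSS2}, which is precisely your argument. Your extra care in checking that the Besicovitch-almost-periodic-point property of $\nu$ depends only on the orbit closure $\XX(\nu)$ (so that the cited proposition, stated for hulls, applies inside an arbitrary TMDS) is a worthwhile detail the paper leaves implicit.
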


As a consequence, we get:
\begin{cor} Let $\mu$ be a translation bounded measure and let $\cA$ be a van Hove sequence. 

If $\mu \in \Map_{\cA}(G)$ then there exists some subsequence $\cA'$ of $\cA$ such that 
\[
\XX(\mu) \cap \Bap_{\cA'}(G) \neq \emptyset \,.
\]
\end{cor}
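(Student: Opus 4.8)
The plan is to combine the two main inputs already assembled in the paper: Theorem~\ref{thm:map-gen}, which manufactures a $G$-invariant measure with pure point spectrum on the hull $\XX(\mu)$, and Theorem~\ref{thm:bap}, whose part (c) guarantees that such a system contains a Besicovitch almost periodic point along any van Hove sequence for which the Birkhoff ergodic theorem is available. The only genuine work is to reconcile the various averaging sequences so that the sequence feeding Theorem~\ref{thm:bap}(c) is simultaneously a subsequence of the original $\cA$, a van Hove sequence, and tempered, so that the pointwise ergodic theorem applies.

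First I would apply Theorem~\ref{thm:map-gen} to $\mu$ and $\cA$. Since $\mu \in \Map_\cA(G)$, this produces a subsequence $\cB=\{B_n\}$ of $\cA$ together with a $G$-invariant probability measure $\m$ on the hull $\XX(\mu)$ whose dynamical spectrum is pure point. Recall that $\XX(\mu)$ is vaguely compact and metrizable, so $(\XX(\mu),G)$ is a TMDS in the sense of the previous subsection and $\m \in \ProbG(\XX(\mu))$. Crucially, the pure point spectrum of $\m$ is now fixed and does not depend on any further choice of averaging sequence, which is what prevents any circularity in the argument below.

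Next I would pass to a tempered subsequence $\cA'$ of $\cB$, which exists by \cite[Prop.~1.4]{Lin}. A subsequence of a van Hove sequence is again a van Hove sequence, so $\cA'$ is van Hove; being tempered, the Birkhoff pointwise ergodic theorem holds along $\cA'$ by the ergodic theorem for tempered F\o lner sequences \cite{Lin}. Moreover $\cA'$ is a subsequence of $\cB$, which is a subsequence of $\cA$, so $\cA'$ is a subsequence of $\cA$, as required. I would then apply Theorem~\ref{thm:bap} to the TMDS $(\XX(\mu),G)$ with the invariant pure-point measure $\m$ and the van Hove sequence $\cA'$: part (c) yields $\XX(\mu) \cap \Bap_{\cA'}(G) \neq \emptyset$, which is exactly the desired conclusion.

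The step I expect to require the most care is the bookkeeping of averaging sequences rather than any deep estimate: one must notice that Theorem~\ref{thm:map-gen} already delivers pure point spectrum from the hypothesis $\mu \in \Map_\cA(G)$ alone, so the temperedness needed for the ergodic theorem can be arranged \emph{afterwards} by thinning $\cB$, without disturbing the spectral type of $\m$. It is worth recording the elementary facts that subsequences of van Hove sequences remain van Hove and that mean almost periodicity passes to subsequences, since a subsequence's Besicovitch seminorm is dominated by that of the full sequence; the latter is reassuring but not strictly needed here, as the pure point conclusion is read off directly from Theorem~\ref{thm:map-gen}.
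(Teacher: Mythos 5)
Your proposal is correct and follows essentially the same route as the paper: apply Theorem~\ref{thm:map-gen} to obtain an invariant measure with pure point spectrum on $\XX(\mu)$, pass to a tempered (hence Birkhoff-valid) van Hove subsequence via Lindenstrauss, and conclude with Theorem~\ref{thm:bap}(c). Your extra care in threading the subsequence through $\cB$ rather than $\cA$ directly is harmless and, if anything, slightly tidier than the paper's version.
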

\begin{proof}
By Theorem~\ref{thm:map-gen}  a measure $\m$ with pure point spectrum on $\XX(\mu)$. Also, by  \cite[Theorem 1.2 and Pro.~1.4]{Lin} $\cA$ has a tempered subsequence $\cA'$, and that the Birkhoff ergodic theorem holds along $\cA'$.

The claim follows from Theorem~\ref{thm:bap}.
\end{proof}

\bigskip

We can now show that each mean almost periodic measure is Besicovitch almost periodic along a translated subsequence of $\cA$. First, we need the following preliminary result.

\begin{propn} Let $G$ be a second-countable locally compact abelian group. Fix  a translation bounded measure $\mu \in \cM^\infty(G)$ and a van Hove sequence $\cA = \{A_n\}$. If the  hull $\XX(\mu)$ contains a measure $\nu$ that is Besicovitch almost periodic with respect to $\cA$, then there exist group elements $t_n \in G$ such that $\mu$ is Besicovitch almost periodic with respect to $\cB = \{B_n\}$, where $B_n = t_n+A_n$.
\end{propn}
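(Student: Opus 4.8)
The plan is to reduce, via Lemma~\ref{lemma-D-used}, to proving that $\mu*\varphi \in \bap_\cB(G)$ for every $\varphi$ in the countable set $D \subset \Cc(G)$, where $\cB = \{t_n + A_n\}$ is to be constructed. Since $\nu \in \XX(\mu)$, fix a sequence $s_k \in G$ with $T_{s_k}\mu \to \nu$ vaguely; because the translates are equi-translation-bounded (Remark~\ref{rem:2.5}), convolution against a fixed $\varphi \in \Cc(G)$ is continuous from the vague topology to uniform convergence on compacta, so $(T_{s_k}\mu)*\varphi \to \nu*\varphi$ uniformly on compact sets for each $\varphi$. The idea is to take $t_n = -s_{k(n)}$ with $k(n) \to \infty$ so that $T_{-t_n}(\mu*\varphi) = (T_{s_{k(n)}}\mu)*\varphi$ approximates $\nu*\varphi$ uniformly on $\overline{A_n}$ to within $1/n$ for all $\varphi$ among the first $n$ elements of $D$. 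A change of variables $s \mapsto s + t_n$ turns $\cB$-averages into $A_n$-averages, yielding $\lim_n \frac{1}{|A_n|}\int_{A_n}|T_{-t_n}(\mu*\varphi) - \nu*\varphi|\, \dd s = 0$ for each $\varphi \in D$.

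The main obstacle is that $\nu*\varphi \in \bap_\cA(G)$ only provides, for each $\eps$, a Bohr almost periodic $g$ with $\|\nu*\varphi - g\|_{\mathsf{b},1,\cA} < \eps$, whereas the change of variables replaces the fixed approximant $g$ by its moving translate $T_{-t_n}g$, which need not stabilize. I would resolve this using the Bochner characterization recalled in Section~\ref{sect:prel}: the orbit $\{T_t g : t \in G\}$ of a Bohr almost periodic function is precompact in $(\Cu(G), \|\cdot\|_\infty)$. Choosing for each $\varphi \in D$ and each $m$ a Bohr almost periodic $g_{\varphi,m}$ with $\|\nu*\varphi - g_{\varphi,m}\|_{\mathsf{b},1,\cA} < 1/m$, I would run a diagonal argument over the countable family $\{g_{\varphi,m}\}$ to pass to a subsequence $s_{k_j}$ along which $T_{-s_{k_j}}g_{\varphi,m}$ converges uniformly to a Bohr almost periodic limit $G_{\varphi,m}$ for every $\varphi, m$ (while still $T_{s_{k_j}}\mu \to \nu$). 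Selecting $t_n = -s_{k_{j(n)}}$ with $j(n)\uparrow\infty$ chosen large enough to also secure the uniform approximation on $\overline{A_n}$ above, I obtain simultaneously the displayed limit and, since translation is an isometry, $\|g_{\varphi,m} - T_{-t_n}G_{\varphi,m}\|_\infty = \|T_{t_n}g_{\varphi,m} - G_{\varphi,m}\|_\infty \to 0$. Crucially the full index $n$ is retained, so $\cB$ has exactly the required form $\{t_n + A_n\}$ and is van Hove, as translates of van Hove sets are van Hove.

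To finish, fix $\varphi \in D$ and $\eps > 0$, pick $m > 1/\eps$, and set $G = G_{\varphi,m}$. Rewriting $\|\mu*\varphi - G\|_{\mathsf{b},1,\cB}$ as a $\limsup$ of $A_n$-averages via $s \mapsto s + t_n$ and inserting the intermediaries $\nu*\varphi$ and $g_{\varphi,m}$, the triangle inequality bounds it by three $\limsup$ terms: the first vanishes by the uniform-on-$\overline{A_n}$ approximation, the second equals $\|\nu*\varphi - g_{\varphi,m}\|_{\mathsf{b},1,\cA} < 1/m < \eps$, and the third vanishes because $\frac{1}{|A_n|}\int_{A_n}|g_{\varphi,m} - T_{-t_n}G|\,\dd s \le \|g_{\varphi,m} - T_{-t_n}G\|_\infty \to 0$. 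Thus $\|\mu*\varphi - G\|_{\mathsf{b},1,\cB} < \eps$ with $G$ Bohr almost periodic, so $\mu*\varphi \in \bap_\cB(G)$ for every $\varphi \in D$, and Lemma~\ref{lemma-D-used} gives $\mu \in \Bap_\cB(G)$. I expect the diagonal/precompactness step reconciling the moving translate of the approximant to be the crux; the continuity of convolution and the measure-theoretic bookkeeping are routine.
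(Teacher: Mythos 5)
Your proposal is correct and follows essentially the same route as the paper's proof: reduce to the countable set $D$ via Lemma~\ref{lemma-D-used}, use Bochner precompactness of the orbits of the Bohr almost periodic approximants (the paper packages the diagonal extraction as compactness of a countable product of orbit closures) to stabilize the moving translates, and then select $t_n$ diagonally so the triangle inequality closes. The only cosmetic differences are that you invoke uniform convergence on compacta of $(T_{s_k}\mu)*\varphi$ where the paper uses pointwise convergence plus dominated convergence, and you translate the approximants in the opposite (but equivalent) direction.
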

\begin{proof} Since $G$ is second-countable, there exists a countable dense  set $D=\{ \varphi_m \}_m$  in $\Cc(G)$ as in Lemma~\ref{lem-D-exists}.

\smallskip
Now, since $\nu \in \Bap_{\cA}(G)$, for each $m,l$ there exists some $f_{m,l} \in SAP(G)$ such that
\[
\| \nu*\varphi_m - f_{m,l} \|_{\mathsf{b},1,\cA} < \frac{1}{l} \,.
\]
Next, pick some $s_n$ such that 
\[
T_{s_n} \mu \to \nu \,.
\]

\smallskip

Next, for each $m,l$ the orbit closure
\[
C_{m,l}= \overline{\{ T_t f_{m,l}: t \in G\}} \subseteq (\Cu(G), \| \,\|_\infty)
\]
is compact. It follows that 
\[
C:=\prod_{m,l} C_{m,l}
\]
is a compact metrizable space. Therefore, the sequence $(T_{s_n} f_{m,l})_{m,l} \in C$ has subsequence convergent to some $(g_{m,l})_{m,l} \in C$. Therefore, there exists some subsequence $s_{k_n}$ of $s_n$ and  $g_{m,l} \in SAP(G)$ such that
\begin{equation}\label{eq1}
    \lim_n \| T_{s_{k_n}} f_{m,l} -g_{m,l} \|_\infty = 0 \qquad \forall m,l \,.
\end{equation}
In particular, for each $m,l$ and each $t \in G$ we have 
\begin{equation}\label{eq2}
\lim_n g_{m,l}(s_{k_n}+t)= f_{m,l}(t)  \,.
\end{equation}

\medskip

Next, for each fixed $m,l,j$ define 
\[
h_{n}(t):=\frac{1}{|A_j|} 1_{A_j}(t) \left|g_{m,l}(t+s_{k_n})-\mu*\varphi_m(t+s_{k_n})\right| \,.
\]
Then, $h_n$ is bounded by $\frac{\|g_{m,l}\|_\infty + \| \mu*\varphi_m \|_\infty}{|A_j|} 1_{A_j} \in L^1(G)$. Moreover, since $T_{s_n} \mu \to \nu$ in the vague topology, we have 
\[
\mu*\varphi_m(t+s_{k_n})\stackrel{\mbox{pointwise}}{\longrightarrow}\nu*\varphi_m(t) \,.
\]
Also, by \eqref{eq2}
\[
g_{m,l}(t+s_{k_n}) \stackrel{\mbox{pointwise}}{\longrightarrow} f_{m,l}(t)\,.
\]

Therefore, by the dominated convergence theorem, we have for each fixed $m,l,j$
\[
\lim_n \frac{1}{|A_j|} \int_{A_j} \left|g_{m,l}(t+s_{k_n})-\mu*\varphi_m(t+s_{k_n})\right|\, \dd t = \frac{1}{|A_j|} \int_{A_j} \left|f_{m,l}(t)-\nu*\varphi_m(t)\right| \, \dd t \,.
\]

Now, pick some $j>1$. Then, there exists some $N_j$ such that, for all $1 \leq m,l \leq j$ and all $n \geq N_j$ we have 
\[
\frac{1}{|A_j|} \int_{A_j} \left|g_{m,l}(t+s{k_n})-\mu*\varphi_m(t+s_{k_n})\right|\, \dd t \leq \frac{1}{|A_j|} \int_{A_j} \left|f_{m,l}(t)-\nu*\varphi_m(t)\right| \, \dd t +\frac{1}{j} \,.
\]
We can also assume without loss of generality that $N_j$ is strictly increasing.

Define
\[
t_j:= -s_{k_{N_{j}}} \,.
\]
Then, for all $1 \leq m,l \leq j$ we have 
\begin{align*}
\frac{1}{|A_j|} \int_{t_j+A_j} \left|g_{m,l}(t)-\mu*\varphi_m(t)\right|\, \dd t &=
\frac{1}{|A_j|} \int_{A_j} \left|g_{m,l}(t+s{k_{N_j}})-\mu*\varphi_m(t+s_{k_{N_j}})\right|\, \dd t \\
&\leq \frac{1}{|A_j|} \int_{A_j} \left|f_{m,l}(t)-\nu*\varphi_m(t)\right| \, \dd t +\frac{1}{j}  \,.
\end{align*}

Letting $j \to \infty$ we get 
\[
\| g_{m,l}- \mu*\varphi_m \|_{\mathsf{b},1,\cB} \leq \| \nu*\varphi_m - f_{m,l} \|_{\mathsf{b},1,\cA} < \frac{1}{l} \,.
\]
This shows that $\mu*\varphi_m \in \bap_{\cB}(G)$ for all $\varphi_m$. 

Therefore, $\mu \in \Bap_{\cB}(G)$ by Lemma~\ref{lemma-D-used}.

\end{proof}

Combining all results in this section, we get:

\begin{thm}\label{thm-map-is-bap} Let $G$ be a second-countable locally compact abelian group. Suppose that a translation bounded measure $\mu \in \cM^\infty(G)$ is mean almost periodic with respect to a van Hove sequence $\cA=\{ A_n \}$. Then, there exists a subsequence $A_{k_n}$ of $A_n$ and group elements $t_n \in G$ such  that $\mu$ is Besicovitch almost periodic with respect to $\cB=\{B_n\}$ where  $B_n=t_n+A_{k_n}$. \qed
\end{thm}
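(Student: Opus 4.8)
The plan is to chain together the two substantial results that immediately precede this theorem, so that the proof itself reduces to bookkeeping with averaging sequences. We are given $\mu \in \Map_{\cA}(G)$ for a van Hove sequence $\cA = \{A_n\}$, and we must produce a translated subsequence along which $\mu$ becomes Besicovitch almost periodic.

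First I would invoke the preceding Corollary: since $\mu$ is mean almost periodic with respect to $\cA$, there is a subsequence $\cA' = \{A_{k_n}\}$ of $\cA$ with $\XX(\mu) \cap \Bap_{\cA'}(G) \neq \emptyset$. Fix any $\nu$ in this intersection, so that $\nu$ lies in the hull $\XX(\mu)$ and is Besicovitch almost periodic with respect to $\cA'$. Observe that $\cA'$ is itself a van Hove sequence, because the defining condition $\lim_n |\partial^K(A_n)|/|A_n| = 0$ (for each compact $K$) is inherited by subsequences. I would then apply the preceding Proposition with $\cA'$ in the role of the van Hove sequence: since $\XX(\mu)$ contains the element $\nu \in \Bap_{\cA'}(G)$, the Proposition furnishes group elements $t_n \in G$ such that $\mu \in \Bap_{\cB}(G)$ for $\cB = \{B_n\}$ with $B_n = t_n + A_{k_n}$. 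This is exactly the asserted conclusion.

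I do not anticipate a genuine obstacle in assembling this theorem, since all of the difficulty resides in the two ingredients being combined. The Corollary itself draws on the main spectral result, Theorem~\ref{thm:almosteverywhere}, together with Theorem~\ref{thm:ppimpliesrgpp}: the pure-point invariant measure on the hull produced by Theorem~\ref{thm:map-gen} has $\rho$-almost every ergodic component of pure point spectrum, which forces a Besicovitch almost periodic point into the hull. The preceding Proposition supplies the diagonal and compactness argument that promotes almost periodicity of a hull element to almost periodicity of $\mu$ itself along translated averages. The only verification required outside these cited results is that the subsequence $\cA'$ is again van Hove, noted above; this is what licenses applying the transfer Proposition with $\cA'$ in place of $\cA$.
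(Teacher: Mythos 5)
Your proposal is correct and matches the paper's own (implicit) argument: the theorem is stated with no written proof precisely because it is the immediate combination of the preceding Corollary (producing a subsequence $\cA'$ with $\XX(\mu)\cap\Bap_{\cA'}(G)\neq\emptyset$) and the preceding Proposition (transferring Besicovitch almost periodicity from a hull element back to $\mu$ along translates of $\cA'$). Your added remark that a subsequence of a van Hove sequence is again van Hove is the only verification needed, and it is correct.
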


The following are trivial consequences of  Theorem~\ref{thm-map-is-bap} and  \cite[Proposition 3.33]{LSS}.

\begin{thm} 

Let $\cA=\{ A_n \}$ be a van Hove sequence and let $f  \in \Cu(G) \cap \map_{\cA}(G)$.

Then, there exists a subsequence $A_{k_n}$ of $A_n$ and some $t_n \in G$ such that, setting $B_n=t_n+A_{k_n}$ and $\cB=\{B_n\}$ we have
\[
f \in \bap_{\cB}(G) \,.
\]\qed
\end{thm}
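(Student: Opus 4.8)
The plan is to reduce this statement about functions to the measure statement of Theorem~\ref{thm-map-is-bap}, using the function/measure dictionary of \cite[Proposition 3.33]{LSS} as a bridge. Given $f \in \Cu(G) \cap \map_\cA(G)$, let $\lambda$ denote the fixed Haar measure on $G$ and associate to $f$ the measure $\mu_f := f \lambda$. Since $f$ is bounded and uniformly continuous, $\mu_f$ is translation bounded: for any compact $K \subset G$ and any $t \in G$ one has $|\mu_f|(t+K) = \int_{t+K} |f| \, \dd\lambda \leq \|f\|_\infty \, \lambda(K)$, so the supremum over $t$ is finite and $\mu_f \in \cM^\infty(G)$.

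The key point is that convolving $\mu_f$ against a test function reproduces the convolution of $f$, namely $\mu_f * \varphi = f * \varphi$ for every $\varphi \in \Cc(G)$. Because convolution by an $L^1$ function is contractive for the Besicovitch seminorm (a standard consequence of the F{\o}lner property) and because an approximate identity recovers the uniformly continuous $f$ in the uniform norm, one has that $f \in \map_\cA(G)$ if and only if $\mu_f \in \Map_\cA(G)$, and likewise $f \in \bap_\cA(G)$ if and only if $\mu_f \in \Bap_\cA(G)$; this is exactly \cite[Proposition 3.33]{LSS}. In particular, from $f \in \map_\cA(G)$ I conclude $\mu_f \in \Map_\cA(G)$.

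Now I apply Theorem~\ref{thm-map-is-bap} to the translation bounded measure $\mu_f$: there exist a subsequence $A_{k_n}$ of $\cA$ and group elements $t_n \in G$ such that $\mu_f \in \Bap_\cB(G)$, where $\cB = \{B_n\}$ and $B_n = t_n + A_{k_n}$. Invoking the dictionary of \cite[Proposition 3.33]{LSS} once more, but now with the averaging sequence $\cB$ in place of $\cA$, the membership $\mu_f = f\lambda \in \Bap_\cB(G)$ forces $f \in \bap_\cB(G)$, which is precisely the desired conclusion.

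The only substantive step --- and the reason the result is genuinely a corollary --- is verifying that the equivalence of \cite[Proposition 3.33]{LSS} holds simultaneously for the mean and the Besicovitch notions and is valid for an arbitrary fixed averaging sequence. Since that proposition concerns a single sequence at a time, no compatibility between $\cA$ and $\cB$ is required: the subsequence-and-translation data produced by Theorem~\ref{thm-map-is-bap} enter only through the seminorm $\|\cdot\|_{\mathsf{b},1,\cB}$, so one simply reapplies the dictionary verbatim with $\cB$. Thus the passage back from measures to functions is automatic, and the proof is complete.
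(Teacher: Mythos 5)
Your proposal is correct and follows essentially the same route as the paper, which derives this theorem as an immediate consequence of Theorem~\ref{thm-map-is-bap} together with the function/measure dictionary of \cite[Proposition 3.33]{LSS} applied to $\mu_f = f\lambda$. Your additional remarks justifying why the dictionary transfers in both directions and for an arbitrary fixed averaging sequence are accurate and merely make explicit what the paper leaves implicit.
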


\begin{cor} Let $f \in \Cu(G)$ and let $\cA$ be a van Hove sequence. If $f \in \map_{\cA}(G)$ then, there exists some $g \in \XX(f) \cap \bap_{\cA}(G)$. \qed
\end{cor}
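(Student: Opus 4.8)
The plan is to realize $g$ as a limit, inside the hull $\XX(f)$, of suitably translated copies of $f$, and then transport the Besicovitch approximation property from $f$ to $g$. First I would apply the preceding theorem to the mean almost periodic function $f \in \Cu(G) \cap \map_{\cA}(G)$, obtaining a subsequence $\{A_{k_n}\}$ of $\cA$ and elements $t_n \in G$ so that $f \in \bap_{\cB}(G)$ for the translated sequence $\cB = \{B_n\}$, $B_n = t_n + A_{k_n}$. Since $f$ is bounded and uniformly equicontinuous, its orbit $\{T_s f : s \in G\}$ is precompact for uniform convergence on compact sets (Arzel\`a--Ascoli), so $\XX(f)$ is compact and metrizable; passing to a subsequence I may assume $T_{-t_n} f \to g$ locally uniformly for some $g \in \XX(f)$. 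This $g$ is the candidate.

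To check $g \in \bap(G)$, fix $\eps > 0$ and pick a Bohr almost periodic $h$ with $\| f - h \|_{\mathsf{b},1,\cB} < \eps$. A change of variables turning averages over $B_n = t_n + A_{k_n}$ into averages over $A_{k_n}$ shows $\limsup_n \tfrac{1}{|A_{k_n}|}\int_{A_{k_n}} |(T_{-t_n}f) - (T_{-t_n}h)| < \eps$. As $h$ is Bohr almost periodic, its orbit closure is uniformly compact, so after a further subsequence $T_{-t_n} h \to \tilde h$ uniformly, with $\tilde h$ again Bohr almost periodic. The triangle inequality then bounds $\tfrac{1}{|A_{k_n}|}\int_{A_{k_n}} |g - \tilde h|$ by this controlled average plus $\tfrac{1}{|A_{k_n}|}\int_{A_{k_n}} |g - T_{-t_n}f|$ and $\| T_{-t_n}h - \tilde h \|_\infty$; the last term vanishes by uniform convergence. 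Granting the middle term is negligible in the limsup, I would obtain $\| g - \tilde h \|_{\mathsf{b},1,\{A_{k_n}\}} \le \eps$, and letting $\eps \to 0$ would give that $g$ is Besicovitch almost periodic along the averaging windows $\{A_{k_n}\}$.

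The hard part is exactly the middle term $\tfrac{1}{|A_{k_n}|}\int_{A_{k_n}} |g - T_{-t_n}f|$: the convergence $T_{-t_n}f \to g$ is only \emph{locally} uniform, whereas the windows $A_{k_n}$ grow (and may drift off to infinity), so local control does not bound an average over the whole window. Compounding this, the naive argument only yields Besicovitch almost periodicity along the subsequence $\{A_{k_n}\}$, whereas the statement asks for the full sequence $\cA$; upgrading from a subsequence (with its per-index translations $t_n$) to a single hull element that is Besicovitch almost periodic along the fixed sequence $\cA$ is the genuine obstacle, since no single translate of $f$ is simultaneously good on every window. I would resolve both points by mimicking the diagonal selection in the Proposition preceding Theorem~\ref{thm-map-is-bap}: interleave the choice of the translates with a countable dense family of compactly-supported test functions (as in Lemma~\ref{lemma-D-used}) and the window index $j$, using the dominated convergence theorem on each fixed window $A_j$ to match the averages along $\cB$ with those of the limit, up to $1/j$. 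Equivalently, and most cleanly, one passes to the associated measure with density $f$ on the hull, applies the measure-theoretic corollary already established, and reads the result back for functions via the function--measure dictionary of \cite[Proposition 3.33]{LSS}; this is the route signalled by the corollary being flagged as a consequence of Theorem~\ref{thm-map-is-bap} and that proposition.
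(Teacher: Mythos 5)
The paper offers no written proof here: the corollary carries a \qed{} and is flagged, together with the theorem preceding it, as a ``trivial consequence'' of Theorem~\ref{thm-map-is-bap} and the function--measure dictionary of \cite[Prop.~3.33]{LSS}. The intended argument is therefore exactly the route you name last: pass to the translation bounded measure with density $f$ with respect to Haar measure, note it lies in $\Map_{\cA}(G)$ by that dictionary, invoke the unlabelled measure-level corollary following Theorem~\ref{thm:bap} to produce a Besicovitch almost periodic element of its hull, observe that since $f\in\Cu(G)$ every element of that hull is again given by a density $g\in\XX(f)$, and translate back. So your ``cleanest'' option coincides with the paper's. Your primary, hands-on argument, by contrast, has exactly the gap you flag: $T_{-t_n}f\to g$ only locally uniformly while the windows $A_{k_n}$ leave every compact set, so $\tfrac{1}{|A_{k_n}|}\int_{A_{k_n}}|g-T_{-t_n}f|$ is genuinely uncontrolled. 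The diagonal selection of the Proposition does not repair this: there the translates are paired with the windows \emph{after the fact}, with $N_j$ chosen as a function of the fixed window index $j$, which is precisely what lets dominated convergence on the fixed window $A_j$ bite; in your setup the translate $t_n$ and the window $A_{k_n}$ arrive already locked together by $\cB$ and cannot be decoupled. That Proposition also runs in the opposite direction (from a good hull element back to $\mu$ itself), so it is not the right tool here.

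More importantly, the second obstacle you single out --- upgrading from a subsequence of $\cA$ to the full sequence --- is real and is not resolved by either of your proposed fixes, nor by the paper's own machinery. Theorem~\ref{thm:bap}(c) requires the Birkhoff ergodic theorem to hold along the averaging sequence, which forces passage to a tempered subsequence; accordingly the measure-level corollary only asserts $\XX(\mu)\cap\Bap_{\cA'}(G)\neq\emptyset$ for \emph{some subsequence} $\cA'$ of $\cA$. Read against that, the conclusion $g\in\bap_{\cA}(G)$ for the full sequence is stronger than what the cited results deliver and appears to be a slip in the statement (it should match the measure version and read $g\in\bap_{\cA'}(G)$ for a subsequence $\cA'$ of $\cA$). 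You were right to identify this as the genuine difficulty, but you should not claim to have overcome it: either prove the subsequence version and flag the discrepancy, or supply a genuinely new argument for the full sequence --- neither the diagonal trick nor the measure dictionary provides one.
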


\subsection{Diffraction spectrum}

Consider a TMDS $(\XX,G)$. Each $\varphi \in \Cc(G)$ defines a continuous function $f_\varphi : \XX \to \C $ via
\[
f_\varphi(\omega):= \omega*\varphi(0) \, .
\]
Let us recall the following result: 

\begin{thm}\cite[Prop.~6]{BL}\label{thm5} Let $(\XX, G)$ be a (TMDS) and $\m$ a $G$-invariant probability measure on $\XX$. Then, there exists a positive definite measure $\gamma_{\m}$ on $G$ satisfying  
\[
\< f_\varphi, U_t f_\varphi \> = \gamma_{\m}*\varphi*\widetilde{\varphi}(t) 
\]
for all $\varphi \in \Cc(G)$ and all $t \in G$. \qed
\end{thm}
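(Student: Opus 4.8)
The plan is to first reduce the two–variable assertion to a statement about the single sesquilinear form $\beta(\varphi,\psi) := \langle f_\varphi, f_\psi\rangle_{L^2(\XX,\m)}$, and then to recognize this form as coming from a positive–definite measure. First I would record the covariance of the assignment $\varphi \mapsto f_\varphi$. Writing out the definitions of the convolution $\omega*\varphi$, of the translation action $\alpha(t,\omega)=T_t\omega$, and of $U_t$, a direct computation gives $U_t f_\varphi = f_{T_{-t}\varphi}$ for all $\varphi\in\Cc(G)$ and $t\in G$. In particular $\varphi\mapsto f_\varphi$ is linear, lands in $C(\XX)\subset L^2(\XX,\m)$ (each $f_\varphi$ is bounded, since every $\omega\in\XX$ lies in a fixed $\cM_{C,U}$), and
\[
\langle f_\varphi, U_t f_\varphi\rangle = \langle f_\varphi, f_{T_{-t}\varphi}\rangle = \beta(\varphi, T_{-t}\varphi).
\]
So everything reduces to understanding $\beta$.

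Next I would extract the structural properties of $\beta$ that force it to be given by a measure. Since $\m$ is $G$–invariant, each $U_t$ is unitary, whence $\beta$ is positive semidefinite and translation invariant: $\beta(T_r\varphi, T_r\psi)=\langle U_{-r}f_\varphi, U_{-r}f_\psi\rangle=\beta(\varphi,\psi)$. Moreover, using $(\omega*\varphi)(s)=f_\varphi(T_{-s}\omega)=(U_s f_\varphi)(\omega)$ and unitarity of $U_s$, the quantity
\[
\beta(\varphi,\psi)=\int_\XX (\omega*\varphi)(s)\,\overline{(\omega*\psi)(s)}\,\dd\m(\omega)
\]
is independent of $s\in G$. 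Translation boundedness of the elements of $\XX$ yields a uniform bound $\|f_\varphi\|_\infty\le C\,\|\varphi\|_\infty$ over a fixed compact support, so $\beta$ is jointly continuous in the inductive–limit topology.

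With these properties in hand, I would build $\gamma_\m$. The span $A=\operatorname{span}\{\varphi*\widetilde\psi : \varphi,\psi\in\Cc(G)\}$ is dense in $\Cc(G)$ in the inductive–limit topology, and I would define $\gamma_\m$ on $A$ by declaring $(\gamma_\m * \varphi*\widetilde\psi)(0):=\beta(\varphi,\psi)$; equivalently, since $(\varphi*\widetilde\psi)(-s)=\langle T_s\varphi,\psi\rangle_{L^2(G)}$, this is the requirement
\[
\int_G \langle T_s\varphi,\psi\rangle_{L^2(G)}\,\dd\gamma_\m(s)=\beta(\varphi,\psi).
\]
The content is to check that this prescription is well defined on $A$ (independent of the way an element is written as a combination of terms $\varphi_i*\widetilde\psi_i$) and extends to a Radon measure; here the $s$–independence and translation invariance from the previous step, together with the uniform translation bound, provide the needed consistency and continuity. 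Positive definiteness of $\gamma_\m$ is then immediate, since $(\gamma_\m*\varphi*\widetilde\varphi)(0)=\beta(\varphi,\varphi)=\|f_\varphi\|^2_\m\ge 0$.

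Finally I would verify the displayed identity for all $t$. Using $\widetilde{T_{-t}\varphi}=T_t\widetilde\varphi$ and the fact that convolution commutes with translation, the first paragraph's reduction gives
\[
\langle f_\varphi, U_t f_\varphi\rangle = \beta(\varphi, T_{-t}\varphi) = \bigl(\gamma_\m * \varphi*\widetilde\varphi\bigr)(t),
\]
where any sign ambiguity coming from the conventions for $U_t$, $\widetilde{\cdot}$ and $\langle\cdot,\cdot\rangle$ is harmless, since both sides are Hermitian (the left because $U_{-t}=U_t^*$, the right because $\gamma_\m$ and $\varphi*\widetilde\varphi$ are). I expect the main obstacle to be the construction step: proving that the formula above genuinely defines a translation bounded measure $\gamma_\m$ on all of $\Cc(G)$ rather than merely a functional on the subspace $A$, i.e.\ the well–definedness and continuity. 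Everything else — the covariance, the properties of $\beta$, and the concluding positive definiteness — is essentially bookkeeping with the definitions and the $G$–invariance of $\m$.
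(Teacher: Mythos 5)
The paper does not prove this statement at all; it is imported from \cite[Prop.~6]{BL} with an immediate \textup{qed}, so the relevant comparison is with the Baake--Lenz proof that your outline is implicitly trying to reconstruct. Your reductions are correct: $U_tf_\varphi=f_{T_{-t}\varphi}$, the form $\beta(\varphi,\psi)=\langle f_\varphi,f_\psi\rangle$ is positive semidefinite, translation invariant and bounded, and the theorem reduces to producing a Radon measure $\gamma_\m$ with $(\gamma_\m*\varphi*\widetilde{\psi})(0)=\beta(\varphi,\psi)$.

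However, the step you defer as ``the main obstacle'' is the entire content of the proposition, and the ingredients you cite do not close it. Well-definedness on $A=\operatorname{span}\{\varphi*\widetilde{\psi}\}$ means: if $\sum_ic_i\,\varphi_i*\widetilde{\psi_i}=0$ then $\sum_ic_i\,\beta(\varphi_i,\psi_i)=0$, i.e.\ $\beta$ factors through $(\varphi,\psi)\mapsto\varphi*\widetilde{\psi}$. The $s$-independence of $\int_\XX(\omega*\varphi)(s)\overline{(\omega*\psi)(s)}\,\dd\m(\omega)$ and the translation invariance of $\beta$ are symmetries of the form; they say nothing about which function of $(\varphi,\psi)$ it factors through. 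The actual argument averages the $s$-independent quantity over a van Hove sequence $B_n$, identifies $\frac{1}{|B_n|}\int_{B_n}(\omega*\varphi)(s)\overline{(\omega*\psi)(s)}\,\dd s$ with $\frac{1}{|B_n|}\bigl(\omega|_{B_n}*\widetilde{\omega|_{B_n}}*\varphi*\widetilde{\psi}\bigr)(0)$ up to boundary terms, and kills those terms using translation boundedness together with the van Hove property; only then is the dependence on $\varphi*\widetilde{\psi}$ alone visible. (An alternative route is spectral: $t\mapsto\langle f_\varphi,U_tf_\varphi\rangle$ is continuous positive definite, hence the transform of a finite measure $\sigma_\varphi$ by Bochner, and one proves the consistency relation $\sigma_{\psi*\varphi}=|\widehat{\psi}|^2\sigma_\varphi$ to glue these into a single measure whose inverse transform is $\gamma_\m$.) Likewise, ``extends to a Radon measure'' does not follow from density of $A$ plus your bound $\|f_\varphi\|_\infty\le C\|\varphi\|_\infty$: an element of $A$ supported in a compact set need not be a combination of $\varphi_i*\widetilde{\psi_i}$ with controlled supports, so inductive-limit continuity is not automatic; one has to exploit positive definiteness of the functional (a Cauchy--Schwarz/GNS argument as in Argabright--de~Lamadrid or Berg--Forst) to obtain the extension. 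Until these two points are carried out, what you have is a correct plan, not a proof; the concluding remark that the residual sign/conjugation mismatch in $\langle f_\varphi,U_tf_\varphi\rangle=(\gamma_\m*\varphi*\widetilde{\varphi})(\pm t)$ is ``harmless because both sides are Hermitian'' also deserves an explicit check, since Hermitian means $g(-t)=\overline{g(t)}$, not $g(-t)=g(t)$.
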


Since $\gamma_{\m}$ is positive definite, by \cite{ARMA1,BF} there exists a positive measure\footnote{The measure $\widehat{\gamma_{\m}}$ is called the \textbf{Fourier transform of $\gamma$} \cite{ARMA1}.} $\widehat{\gamma_{\m}}$ on $\widehat{G}$ such that for all $\varphi \in \Cc(G)$ we have $|\widehat{\varphi}|^2 \in L^1(\widehat{\gamma_{\m}})$ and $\gamma_{\m}*\varphi*\widetilde{\varphi}$ is the continuous positive definite function associated via the Bochner theorem to the finite positive measure $|\widehat{\varphi}|^2 \widehat{\gamma_{\m}}$.

\begin{defn} The measure $\gamma_{\m}$ from Theorem~\ref{thm5} is called the \textbf{autocorrelation measure} of  $(\XX, G, \m)$ while the measure $\widehat{\gamma_{\m}}$ is called the \textbf{diffraction measure} of $(\XX, G, \m)$.
\end{defn}

\begin{rmk}
\begin{itemize}
    \item[(a)] Given some $\mu \in \cM^\infty(G)$ and a van Hove sequence $\cA$, an autocorrelation $\gamma$ of $\mu$ is usually defined as a vague cluster point of 
\[
\frac{1}{|A_n|} \mu|_{A_n}*\widetilde{\mu|_{A_n}} \,.
\]
When $G$ is second-countable, there is always a subsequence $A_{k_n}$ of $A_n$ such that the vague limit 
\[
\gamma=\lim_n \frac{1}{|A_{k_n}|} \mu|_{A_{k_n}}*\widetilde{\mu|_{A_{k_n}}} \,.
\]
By eventually replacing $A_n$ by $A_{k_n}$ we can usually assume without loss of generality that the autocorrelation $\gamma$ exists along $\cA$.
\item[(b)] Assume that the autocorrelation of $\mu \in \cM^\infty(G)$ exists along $\cA$. Let $\m$ be  measure on $\XX(\mu)$ given by Theorem~\ref{thm:map-gen}.

Then $\gamma$ is the autocorrelation of $(\XX(\mu),G, \m)$ \cite{LSS,LS}.
\end{itemize}
\end{rmk}

Let us now recall the so called Dworkin argument, see for example \cite{BL,Gou,LMS}.

\begin{propn} Let $(\XX, G)$ be a (TMDS) and $\m$ a $G$-invariant probability measure on $\XX$. Then, for all $\varphi \in \Cc(G)$ we have 
\[
\sigma_{_\varphi}^\m = \left| \widehat{\varphi} \right|^2 \widehat{\gamma_{\m}} \,.
\]\qed
\end{propn}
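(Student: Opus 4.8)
The plan is to exhibit both $\sigma^\m_\varphi$ and $\left|\widehat\varphi\right|^2\widehat{\gamma_{\m}}$ as finite positive measures on $\widehat G$ sharing the same Fourier--Stieltjes transform, and then to invoke the injectivity of the Fourier--Stieltjes transform on finite measures. That both are finite and positive is immediate: since $\XX$ is vaguely compact we have $f_\varphi \in C(\XX) \subset L^2(\XX,\m)$, so $\sigma^\m_\varphi$ is a finite positive Borel measure with $\|\sigma^\m_\varphi\| = \|f_\varphi\|^2_\m$, while $\left|\widehat\varphi\right|^2\widehat{\gamma_{\m}}$ is finite because $\left|\widehat\varphi\right|^2 \in L^1(\widehat{\gamma_{\m}})$ by the very construction of the diffraction measure.

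First I would compute the Fourier--Stieltjes transform of $\sigma^\m_\varphi$. By the defining property of the spectral measure, for every $t \in G$,
\[
\int_{\widehat G}\chi(t)\,\dd\sigma^\m_\varphi(\chi) = \langle U_\m(t)f_\varphi, f_\varphi\rangle\,.
\]
Using that $U_\m$ is unitary, so that $U_\m(t)^\ast = U_\m(-t)$, together with $\chi(-t) = \overline{\chi(t)}$, this gives
\[
\int_{\widehat G}\overline{\chi(t)}\,\dd\sigma^\m_\varphi(\chi) = \langle U_\m(-t)f_\varphi, f_\varphi\rangle = \langle f_\varphi, U_\m(t)f_\varphi\rangle = (\gamma_{\m}*\varphi*\widetilde\varphi)(t)\,,
\]
where the final equality is exactly Theorem~\ref{thm5}.

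Next I would invoke the definition of the diffraction measure $\widehat{\gamma_{\m}}$: by construction, $\gamma_{\m}*\varphi*\widetilde\varphi$ is the continuous positive-definite function associated via Bochner's theorem to the finite positive measure $\left|\widehat\varphi\right|^2\widehat{\gamma_{\m}}$, that is,
\[
(\gamma_{\m}*\varphi*\widetilde\varphi)(t) = \int_{\widehat G}\overline{\chi(t)}\,\dd\bigl(\left|\widehat\varphi\right|^2\widehat{\gamma_{\m}}\bigr)(\chi) \qquad \forall t \in G\,.
\]
Comparing with the previous display shows that $\sigma^\m_\varphi$ and $\left|\widehat\varphi\right|^2\widehat{\gamma_{\m}}$ have identical Fourier--Stieltjes transforms. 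Since, under Pontryagin duality, the family $\{\chi \mapsto \chi(t) : t \in G\}$ exhausts the characters of $\widehat G$, and the Fourier--Stieltjes transform is injective on the finite measures on $\widehat G$, the two measures coincide, which is the claim.

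The one point requiring genuine care is matching the Fourier/Bochner conventions across the three inputs: the paper's normalization of the spectral measure in Definition~\ref{spectraldef}, the precise form of Theorem~\ref{thm5}, and the convention under which $\widehat{\gamma_{\m}}$ is built from $\gamma_{\m}$ via Bochner's theorem. In particular, one must track the complex conjugation (equivalently, the reflection $\chi \mapsto \overline{\chi}$) introduced by passing from $\langle U_\m(t)f_\varphi, f_\varphi\rangle$ to $\langle f_\varphi, U_\m(t)f_\varphi\rangle$ and confirm it agrees with the conjugation built into the Bochner correspondence defining $\widehat{\gamma_{\m}}$; once the conventions are aligned the uniqueness step is immediate, and everything else is routine.
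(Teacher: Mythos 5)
Your argument is correct; the paper itself supplies no proof here, merely recalling the proposition as the Dworkin argument with citations to the literature, and your route (both measures are finite and positive, their Fourier--Stieltjes transforms agree by combining the defining property of the spectral measure with Theorem~\ref{thm5} and the Bochner-theoretic definition of $\widehat{\gamma_{\m}}$, hence they coincide by uniqueness) is exactly the standard one found in those sources. The only delicate point is the one you already flag: the possible reflection $\chi \mapsto \overline{\chi}$ coming from the passage between $\langle U_\m(t)f_\varphi, f_\varphi\rangle$ and $\langle f_\varphi, U_\m(t)f_\varphi\rangle$ must be checked against the Bochner convention used to define $\widehat{\gamma_{\m}}$, and once that is aligned the proof is complete.
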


\begin{rmk} The diffraction spectrum can be seen as the dynamical spectrum of the sub-representation $U$ of $G$ on the $G$-invariant Hilbert space (see \cite[Lemma~8 and Theorem~6]{BL})
\[
\mathcal{H}= \overline{ \{f_\varphi : \varphi \in \Cc(G) \}} \subseteq L^2(\XX, \m) \,.
\]
In general the diffraction spectrum is smaller than the dynamical spectrum, and it can be strictly smaller.

On another hand, pure point diffraction and dynamical spectrum are equivalent and \cite[Thm.~7, Thm.~9]{BL}
$\SSS$ is the group generated by 
\[
\{ \chi \in \widehat{G} : \widehat{\gamma_\m}(\{ \chi \}) >0 \,. 
\]
\end{rmk}

\medskip

Now, we get the following consequence of Lemma~\ref{specint}, which can  be seen as the diffraction version of Thm.~\ref{thm:almosteverywhere}.

\begin{thm} Let $(\XX,G)$ be a TMDS and let $\m$ be any $G$-invariant probability measure on $\XX$. 

Let $\rho$ be the probability  measure on $\Erg(\XX)$  representing $\m$. Then, 
\begin{itemize}
    \item[(a)] For all $\varphi \in \Cc(G)$ and Borel sets $B \subseteq \widehat{G}$ we have 
\[
\int_{B} \left| \widehat{\varphi}(\chi) \right|^2  \dd \widehat{\gamma_\m}(\chi) = \int_{\Erg(\XX)} \int_{B} \left| \widehat{\varphi}(\chi) \right|^2 \dd \widehat{\gamma_\omega}(\chi) \dd \rho(\omega) \,. 
\]
\item[(b)] For all $h \in B(\widehat{G})$ with $\supp(h)$ compact we have 
\[
\int_{\widehat{G}} h(\chi)   \dd \widehat{\gamma_\m}(\chi) = \int_{\Erg(\XX)} \int_{\widehat{G}} h(\chi) \dd \widehat{\gamma_\omega}(\chi) \dd \rho(\omega) \,. 
\]
\end{itemize}
\end{thm}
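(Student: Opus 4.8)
The plan is to deduce both parts from Lemma~\ref{specint} combined with the Dworkin argument of the preceding Proposition, reading part (a) as a direct substitution and part (b) as a localization that divides out the weight $|\widehat{\varphi}|^2$. Throughout I keep in mind the notational point that the spectral measure written $\sigma^\m_\varphi$ for $\varphi \in \Cc(G)$ is really the spectral measure of the associated continuous function $f_\varphi \in C(\XX)$ on the compact hull, defined by $f_\varphi(\omega) = (\omega*\varphi)(0)$.

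For part (a), I would fix $\varphi \in \Cc(G)$ and apply Lemma~\ref{specint} with $f_\varphi$ in the role of its fixed test function, using that $\rho$ is supported on $\Erg(\XX) \subseteq \MeasG(\XX)$. Part (d) of that lemma then gives
\[
\int_{\widehat G} g \, \dd \sigma^\m_{f_\varphi} = \int_{\Erg(\XX)} \int_{\widehat G} g \, \dd \sigma^\omega_{f_\varphi} \, \dd \rho(\omega) \qquad \forall g \in \Borel(\widehat G).
\]
Substituting the Dworkin identities $\sigma^\m_{f_\varphi} = |\widehat{\varphi}|^2 \widehat{\gamma_\m}$ and $\sigma^\omega_{f_\varphi} = |\widehat{\varphi}|^2 \widehat{\gamma_\omega}$ yields
\[
\int_{\widehat G} g\,|\widehat{\varphi}|^2 \, \dd \widehat{\gamma_\m} = \int_{\Erg(\XX)} \int_{\widehat G} g\,|\widehat{\varphi}|^2 \, \dd \widehat{\gamma_\omega} \, \dd \rho(\omega) \qquad \forall g \in \Borel(\widehat G),
\]
and taking $g = \1_B$ gives exactly (a). I emphasize that this intermediate identity, valid for \emph{all} Borel $g$, is what I will feed into part (b).

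For part (b), the only obstruction is the unwanted weight $|\widehat{\varphi}|^2$, which I would remove by a covering argument; this is the step I expect to be the crux. Given $h \in \Borel(\widehat G)$ with compact support $K$, I first note that for any $\chi_0 \in \widehat G$ one can pick $\varphi \in \Cc(G)$ with $\widehat{\varphi}(\chi_0) \neq 0$: taking $\varphi(t) = \chi_0(t)\psi(t)$ with $\psi \geq 0$ a bump satisfying $\int \psi \neq 0$ gives $\widehat{\varphi}(\chi_0) = \int \psi \neq 0$, so $|\widehat{\varphi}|^2 > 0$ on a neighborhood of $\chi_0$. Covering the compact set $K$ by finitely many such neighborhoods produces $\varphi_1, \dots, \varphi_n \in \Cc(G)$ for which $W := \sum_{i=1}^n |\widehat{\varphi_i}|^2$ is continuous, nonnegative, and bounded below by some $\delta > 0$ on $K$. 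I then set $f := h/W$ on $\{W > 0\}$ and $f := 0$ on $\{W = 0\}$; since $h$ vanishes off $K$, this $f$ is a bounded Borel function (with $\|f\|_\infty \leq \|h\|_\infty/\delta$) and satisfies $h = fW = \sum_{i=1}^n f\,|\widehat{\varphi_i}|^2$ identically on $\widehat G$.

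Finally I would assemble the pieces. Each $|\widehat{\varphi_i}|^2$ lies in $L^1(\widehat{\gamma_\m})$ and in $L^1(\widehat{\gamma_\omega})$, and the bound $W \geq \delta$ on $K$ forces $\widehat{\gamma_\m}$ and every $\widehat{\gamma_\omega}$ to be finite on $K$, so all integrals below converge and $\omega \mapsto \int_{\widehat G} h \, \dd\widehat{\gamma_\omega}$ is measurable and $\rho$-integrable by Lemma~\ref{specint}(c). Applying the intermediate identity from (a) to the bounded Borel function $f$ with each $\varphi_i$, summing the finitely many equations, and using $\sum_i f\,|\widehat{\varphi_i}|^2 = h$ gives
\[
\int_{\widehat G} h \, \dd \widehat{\gamma_\m} = \sum_{i=1}^n \int_{\widehat G} f\,|\widehat{\varphi_i}|^2 \, \dd \widehat{\gamma_\m} = \sum_{i=1}^n \int_{\Erg(\XX)} \int_{\widehat G} f\,|\widehat{\varphi_i}|^2 \, \dd \widehat{\gamma_\omega} \, \dd \rho(\omega) = \int_{\Erg(\XX)} \int_{\widehat G} h \, \dd \widehat{\gamma_\omega} \, \dd \rho(\omega),
\]
where the last equality interchanges the finite sum with the $\rho$-integral. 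Thus the main obstacle, and the only genuinely new ingredient beyond (a), is the covering construction producing a continuous weight $W$ strictly positive on $\supp(h)$; everything else is substitution and routine bookkeeping on local finiteness of the diffraction measures.
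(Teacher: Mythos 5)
Your proof is correct and takes essentially the same route as the paper: both parts reduce to Lemma~\ref{specint}(d) applied to $f_\varphi$ via the Dworkin identity $\sigma^\omega_{f_\varphi}=|\widehat{\varphi}|^2\,\widehat{\gamma_\omega}$, and (b) is obtained by dividing the weight out of $h$. The only (cosmetic) difference is that the paper invokes a single $\varphi\in\Cc(G)$ with $\widehat{\varphi}\geq \1_K$ on a compact neighbourhood of $\supp(h)$, whereas you build the strictly positive weight $W=\sum_i|\widehat{\varphi_i}|^2$ by a finite covering with modulated bump functions --- a self-contained substitute that costs only a finite sum.
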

\begin{proof}
(a) follows from Lemma~\ref{specint}(d) applied to $f=\1_B$ and $f_\varphi$.

For (b), pick an arbitrary $h \in B(\widehat{G})$ of compact support. 
Pick some compact set $K$ such that $\supp(h) \subseteq K^\circ$.

By \cite{BF,MoSt} there exists some $\varphi \in \Cc(G)$ such that $\widehat{\varphi} \geq 1_K$. We can then pick some $f \in B(\widehat{G})$ such that 
\[
f \cdot \left| \widehat{\varphi} \right|^2 = h \,.
\]
The claim follows now from Lemma~\ref{specint}(d) applied to $f$ and $f_\varphi$.

\end{proof}

Now, we list some of the consequences.

\begin{cor} Let $(\XX,G)$ be a TMDS and let $\m$ be any $G$-invariant probability measure on $\XX$. 

Let $\rho$ be the probability  measure on $\Erg(\XX)$  representing $\m$. Let $B \subseteq \widehat{G}$ be a Borel set, and $\chi \in \widehat{G}$. Then,
\begin{itemize}
\item[(a)] If $B$ is pre-compact then 
\[
\widehat{\gamma_\m}(B) = \int_{\Erg(\XX)} \widehat{\gamma_\omega}(B) \dd \rho(\omega) \,. 
\]
\item[(b)] 
\[
\widehat{\gamma_\m}(\{\chi\}) = \int_{\Erg(\XX)} \widehat{\gamma_\omega}(\{\chi\}) \dd \rho(\omega) \,. 
\]
    \item[(c)] If $\supp(\widehat{\gamma_\m}) \subseteq B$ then $\supp(\widehat{\gamma_\omega}) \subseteq B$ for $\rho$-a.s. all $\omega \in \Erg(\XX)$.
    \item[(d)] If $\widehat{\gamma_\m}(\{ \chi\})=0$ then  $\widehat{\gamma_\omega}(\{ \chi\})=0$ for $\rho$-a.s. all $\omega \in \Erg(\XX)$.
    \item[(e)] If $\widehat{\gamma_\m}(B)>0$ then $\widehat{\gamma_\omega}(B)>0$ for $\omega$ in a subset of $\Erg(\XX)$ of positive $\rho$-measure.    
    \item[(f)] If $\widehat{\gamma_\m}(\{ \chi\})>0$ then  $\widehat{\gamma_\omega}(\{ \chi\})>0$ for $\omega$ in a subset of $\Erg(\XX)$ of positive $\rho$-measure.  
    \item[(g)] If $\widehat{\gamma_\m}$ is pure point then $\widehat{\gamma_\omega}$ is pure point for $\rho$-a.s. all $\omega \in \Erg(\XX)$.
    \item[(h)] If $\widehat{\gamma_\m}$ is singular then $\widehat{\gamma_\omega}$ is singular for $\rho$-a.s. all $\omega \in \Erg(\XX)$.
\end{itemize}\qed 
\end{cor}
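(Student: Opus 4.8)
The plan is to derive every item from the two displayed identities of the preceding theorem, whose part (b) asserts that $\int_{\widehat G} h \,\dd\widehat{\gamma_\m} = \int_{\Erg(\XX)} \int_{\widehat G} h \,\dd\widehat{\gamma_\omega}\,\dd\rho(\omega)$ for every bounded Borel $h$ of compact support. The one structural caveat to keep in mind throughout is that $\widehat{\gamma_\m}$ is in general an infinite (though $\sigma$-finite, since $\widehat G$ is $\sigma$-compact and $\widehat{\gamma_\m}$ is Radon) measure, so one may only feed compactly supported test functions into that identity; everything else must be obtained by exhausting $\widehat G$ from the inside by a fixed increasing sequence $V_1 \subseteq V_2 \subseteq \cdots$ of pre-compact open sets with $\bigcup_k V_k = \widehat G$.

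For (a) and (b) I would simply apply part (b) of the theorem to $h=\1_{B}$ (respectively $h=\1_{\{\chi\}}$): when $B$ is pre-compact its indicator is bounded Borel with compact support $\overline B$, and the identity reads exactly $\widehat{\gamma_\m}(B) = \int_{\Erg(\XX)} \widehat{\gamma_\omega}(B)\,\dd\rho(\omega)$; the singleton case is identical, as $\{\chi\}$ is compact. Items (d) and (f) then fall out immediately: in (b) the integrand $\omega\mapsto\widehat{\gamma_\omega}(\{\chi\})$ is non-negative, so a vanishing integral forces it to vanish $\rho$-a.e.\ (giving (d)), while a strictly positive integral forces it to be positive on a set of positive $\rho$-measure (giving (f)).

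The non-pre-compact cases require the exhaustion. For (e), writing $B=\bigcup_k (B\cap V_k)$ and using continuity from below, $\widehat{\gamma_\m}(B)>0$ forces $\widehat{\gamma_\m}(B\cap V_k)>0$ for some $k$; since $B\cap V_k$ is pre-compact, part (a) gives $\int_{\Erg(\XX)}\widehat{\gamma_\omega}(B\cap V_k)\,\dd\rho(\omega)>0$, whence $\widehat{\gamma_\omega}(B)\ge\widehat{\gamma_\omega}(B\cap V_k)>0$ on a set of positive $\rho$-measure. For (c) I read ``$\supp(\widehat{\gamma_\m})\subseteq B$'' as concentration, i.e.\ $\widehat{\gamma_\m}(\widehat G\setminus B)=0$, consistent with the meaning of ``supported inside'' used elsewhere in the paper. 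Then for each $k$ the set $V_k\setminus B$ is pre-compact with $\widehat{\gamma_\m}(V_k\setminus B)=0$, so by (a) and non-negativity $\widehat{\gamma_\omega}(V_k\setminus B)=0$ for $\rho$-a.e.\ $\omega$; intersecting these countably many conull sets and letting $k\to\infty$ (continuity from below, since $V_k\setminus B\nearrow\widehat G\setminus B$) yields $\widehat{\gamma_\omega}(\widehat G\setminus B)=0$ for $\rho$-a.e.\ $\omega$, that is $\supp(\widehat{\gamma_\omega})\subseteq B$.

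Finally (g) and (h) reduce to (c) by choosing $B$ well. For (g) I would take $B$ to be the set of atoms $\{\chi:\widehat{\gamma_\m}(\{\chi\})>0\}$; because $\widehat{\gamma_\m}$ is $\sigma$-finite this is countable, hence Borel, and ``$\widehat{\gamma_\m}$ pure point'' means precisely $\widehat{\gamma_\m}(\widehat G\setminus B)=0$, so (c) gives $\widehat{\gamma_\omega}(\widehat G\setminus B)=0$ a.e., and a measure concentrated on a countable set is pure point. For (h) I would fix a Haar-null Borel set $N$ carrying $\widehat{\gamma_\m}$ and apply (c) with $B=N$, since a measure concentrated on a Haar-null set is singular. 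The only genuinely delicate points, and the ones I would be most careful about, are the $\sigma$-finiteness needed to make the atom set in (g) countable and the uniform reading of ``supported in $B$'' as concentration on $B$; once these are settled, the exhaustion in (c) does all the real work and the remaining items are book-keeping.
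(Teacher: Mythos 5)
Your proof is correct and follows exactly the route the paper intends: the corollary is stated with no proof as an immediate consequence of the preceding theorem, and your derivation (testing against $\1_B$ for pre-compact $B$, exhausting $\widehat G$ by pre-compact opens for the non-compact cases, and reducing (g), (h) to concentration on a countable respectively Haar-null carrier) is the natural way to fill in that gap. The two points you flag as delicate --- $\sigma$-finiteness of the Radon measure $\widehat{\gamma_\m}$ on the $\sigma$-compact group $\widehat G$, and reading ``supported in $B$'' as $\widehat{\gamma}(\widehat G\setminus B)=0$ --- are indeed the right readings and are consistent with the paper's usage elsewhere.
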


\section*{Acknowledgements} C.R. was supported by the NSERC Discovery grant 2019-05430, and N.S. was supported by the NSERC Discovery grant 2024-04853.

\bibliographystyle{habbrv}
\bibliography{mainpaper}

\end{document}